\definecolor{persianred}{rgb}{0.8, 0.2, 0.2}
\definecolor{burgundy}{rgb}{0.5, 0.0, 0.13}
\definecolor{prussianblue}{rgb}{0.0, 0.19, 0.33}
\definecolor{aquamarine}{rgb}{0.5, 1.0, 0.83}
\definecolor{ao(english)}{rgb}{0.0, 0.5, 0.0}
\definecolor{blizzardblue}{rgb}{0.67, 0.9, 0.93}
\definecolor{blue(munsell)}{rgb}{0.0, 0.5, 0.69}
\numberwithin{equation}{section}
\theoremstyle{definition}
\newtheorem{definition}[subsubsection]{Definition}
\newtheorem{defthm}[subsubsection]{Definition-Theorem}
\newtheorem{notation}[subsubsection]{Notation}
\newtheorem{remark}[subsubsection]{Remark}
\newtheorem{remarks}[subsubsection]{Remarks}
\newenvironment{discussion*}
 {\pushQED{\qed}\discussion}
 {\popQED\enddiscussion}
\newtheorem{lemma}[subsubsection]{Lemma}
\newtheorem{setting}[subsubsection]{Setting}
\newtheorem{theorem}[subsubsection]{Theorem}
\newtheorem*{maintheorem*}{Main Theorem}
\newtheorem{proposition}[subsubsection]{Proposition}
\newtheorem{corollary}[subsubsection]{Corollary}
\newtheorem{Theorem}[paragraph]{Theorem}
\newtheorem{Proposition}[paragraph]{Proposition}
\newtheorem{Corollary}[paragraph]{Corollary}
\newtheorem{Lemma}[paragraph]{Lemma}
\renewcommand{\thesubsection}{%
  \ifnum\c@subsection<1 \@arabic\c@section
  \else \thesection.\@arabic\c@subsection
  \fi
}
\def\subsubsection{\@startsection{subsubsection}{3}%
  \z@{.5\linespacing\@plus.7\linespacing}{-.5em}%
  {\normalfont\bfseries}
  }
\newcommand{\N}{\mathbb{N}}
\newcommand{\PP}{\mathbb{P}}
\newcommand{\R}{\mathbb{R}}
\newcommand{\cF}{\mathcal{F}}
\newcommand{\cG}{\mathcal{G}}
\newcommand{\cO}{\mathcal{O}}
\newcommand{\fP}{\mathfrak{P}}
\newcommand{\sP}{\mathscr{P}}
\newcommand{\fQ}{\mathfrak{Q}}
\newcommand{\sU}{\mathscr{U}}
\newcommand{\fm}{\mathfrak{m}}
\newcommand{\fn}{\mathfrak{n}}
\newcommand{\fp}{\mathfrak{p}}
\newcommand{\fq}{\mathfrak{q}}
\renewcommand{\phi}{\varphi}
\renewcommand{\emptyset}{\varnothing}
\renewcommand{\tilde}[1]{\widetilde{#1}}
\def\Ddots{\mathinner{\mkern1mu\raise\p@
\vbox{\kern7\p@\hbox{.}}\mkern2mu
\raise4\p@\hbox{.}\mkern2mu\raise7\p@\hbox{.}\mkern1mu}}
\DeclareMathOperator{\codim}{codim}
\DeclareMathOperator{\coker}{coker}
\DeclareMathOperator{\Hom}{Hom}
\DeclareMathOperator{\Spec}{Spec}
\DeclareMathOperator{\height}{ht}
\DeclareMathOperator{\Min}{Min}
\DeclareMathOperator{\Frac}{Frac}
\DeclareMathOperator{\red}{red}
\DeclareMathOperator{\Reg}{Reg}
\DeclareMathOperator{\td}{tr. deg}
\newcommand{\ehk}{e_{\text{HK}}}
\def\@tocline#1#2#3#4#5#6#7{\relax
  \ifnum #1>\c@tocdepth 
  \else
    \par \addpenalty\@secpenalty\addvspace{#2}%
    \begingroup \hyphenpenalty\@M
    \@ifempty{#4}{%
      \@tempdima\csname r@tocindent\number#1\endcsname\relax
    }{%
      \@tempdima#4\relax
    }%
    \parindent\z@ \leftskip#3\relax \advance\leftskip\@tempdima\relax
    \rightskip\@pnumwidth plus4em \parfillskip-\@pnumwidth
    #5\leavevmode\hskip-\@tempdima
      \ifcase #1
       \or\or \hskip 1em \or \hskip 2em \else \hskip 3em \fi%
      #6\nobreak\relax
    \dotfill\hbox to\@pnumwidth{\@tocpagenum{#7}}\par 
    \nobreak
    \endgroup
  \fi}
\title{Hilbert--Kunz multiplicity of fibers and Bertini theorems}
\author{Rankeya Datta}
\address{Department of Mathematics, University of Illinois at Chicago, Chicago, IL 60607}
\email{\href{mailto:rankeya@uic.edu}{rankeya@uic.edu}}
\urladdr{\url{https://rankeya.people.uic.edu}}
\author{Austyn Simpson}
\address{Department of Mathematics, University of Illinois at Chicago, Chicago, IL 60607}
\email{\href{mailto:awsimps2@uic.edu}{awsimps2@uic.edu}}
\thanks{The second author was supported by NSF RTG grant DMS-1246844.}
\begin{document}

\maketitle

\begin{abstract}
    Let $k$ be an algebraically closed field of characteristic $p > 0$. We show that if $X\subseteq\PP^n_k$ is an equidimensional subscheme with Hilbert--Kunz multiplicity less than $\lambda$ at all points $x\in X$, then for a general hyperplane $H\subseteq\PP^n_k$, the Hilbert--Kunz multiplicity of $X\cap H$ is less than $\lambda$ at all points $x\in X\cap H$. This answers a conjecture and generalizes a result of Carvajal-Rojas, Schwede and Tucker, whose conclusion is the same as ours when $X\subseteq\PP^n_k$ is normal. In the process, we substantially generalize certain uniform estimates on Hilbert--Kunz multiplicities of fibers of maps obtained by the aforementioned authors that should be of independent interest.
\end{abstract}

\renewcommand{\baselinestretch}{0.75}\normalsize
\tableofcontents
\renewcommand{\baselinestretch}{1.0}\normalsize

\section{Introduction}

Recall that the \emph{Hilbert--Kunz multiplicity} of a Noetherian local ring $(R,\fm)$ of prime characteristic $p > 0$, denoted $\ehk(R)$, is the limit
$$\ehk(R):=\lim_{e\longrightarrow\infty}\frac{\ell_R(R/\fm^{[p^e]})}{p^{e\dim R}}.$$
A natural prime characteristic analogue of the Hilbert--Samuel multiplicity, $\ehk(R)$ has been frequently used to study the singularities of $R$ since its proof of existence in \cite{Mon83}. The general slogan is that the closer $\ehk(R)$ is to one, the ``better" the singularities of $R$ are. Indeed, under mild assumptions, $\ehk(R)=1$ precisely when $R$ is regular \cite{WY00}, and if $\ehk(R)$ is sufficiently close to $1$ then $R$ is $F$-regular and Gorenstein \cite{BE04,AE08}.



The goal of this paper is to prove a Bertini type theorem for the Hilbert--Kunz multiplicity. The classical Bertini theorem states that if $X$ is a smooth subscheme of $\mathbb{P}^n_k$ over an algebraically closed field $k$, then a general hyperplane section of $X$ is also smooth \cite[Chapter II, Theorem 8.18]{Har77}. Inspired by this classical result, one expects the singularities of general hyperplane sections of $X$ to not get worse even when $X$ is singular. Our main theorem confirms this expectation for the Hilbert--Kunz multiplicity and answers a conjecture of Carvajal-Rojas, Schwede and Tucker \cite[Remark 5.6]{CRST17}, who obtained a similar result with normality hypotheses \cite[Theorem 5.5]{CRST17}:

\begin{maintheorem*}[Theorem \ref{ehk-bertini}]
Let $k$ be an algebraically closed field of characteristic $p > 0$, and let $X\subseteq \PP_k^n$ be an equidimensional subscheme. Fix a real number $\lambda\geq 1$. If $\ehk(\cO_{X,x})<\lambda$ for all $x \in X$, then for a general hyperplane $H\subseteq\PP^n_k$ and for all $x \in X \cap H$, $\ehk(\cO_{X \cap H,x})<\lambda$.
\end{maintheorem*}

The theorem is inspired by the fact that its analogue holds for the Hilbert--Samuel multiplicity of irreducible subvarieties of $\PP^n_k$ in characteristic $0$ by \cite[Proposition 4.5]{dFEM03}, a result usually credited to Kleiman. However, without irreducibility or normality hypotheses, the Main Theorem requires substantially more effort to prove. 

The primary tool we employ is a well-known framework developed in \cite{CGM86} to establish Bertini type theorems for local properties of schemes that satisfy some natural axioms. This framework has been successfully used to establish Bertini theorems for properties such as weak normality in characteristic $0$ \cite{CGM86}, $F$-purity and strong $F$-regularity \cite[Corollary 6.7]{SZ13}, the $F$-signature \cite[Theorem 5.4]{CRST17}, among others. Experts are well-aware that the axiomatic framework, which we now summarize, allows one to prove Bertini theorems for more  general linear systems than just those coming from closed immersions. This is also true for the Hilbert--Kunz multiplicity (see Theorem \ref{ehk-bertini}). However, we have chosen to emphasize the most interesting case of Theorem \ref{ehk-bertini} in the introduction for simplicity.

\subsection{Structure of the proof of the Main Theorem}
Cumino, Greco and Manaresi showed that if $\sP$ is a local property of Noetherian schemes that satisfies the following two axioms, and $X$ is a subscheme of $\PP^n_k$ satisfying $\sP$, then a general hyperplane section of $X$ also satisfies $\sP$ \cite[Theorem 1]{CGM86}:
\begin{enumerate}[label=(AX{{\arabic*}})]
    \item Whenever $\phi:Y\rightarrow Z$ is a flat morphism with regular fibers and $Z$ is $\sP$ then $Y$ is $\sP$.\label{axiom:A1}
    \item Let $\phi:Y\rightarrow S$ be a finite type morphism where $Y$ is excellent and $S$ is integral with generic point $\eta$. If the generic fiber $Y_\eta$ is geometrically $\sP$, then there exists an open neighborhood $\eta\in U\subseteq S$ such that $Y_s$ is $\sP$ for each $s\in U$.\label{axiom:A2}
\end{enumerate}

\noindent Thus, one way to prove the Main Theorem is to establish \ref{axiom:A1} and \ref{axiom:A2} for the following local property of a locally Noetherian scheme $X$:
\[
\sP_{HK, \lambda} \coloneqq \ehk(\cO_{X,x})<\lambda,\text{ for all }x\in X, \text{ and a fixed real number } \lambda \geq 1.
\]
 That $\sP_{HK, \lambda}$ satisfies \ref{axiom:A1} has been known since the 1970s by the work of Kunz (see Theorem \ref{KunzA1}). 
The main content of our paper is that $\sP_{HK, \lambda}$ satisfies \ref{axiom:A2} without normality hypotheses. The statement of \ref{axiom:A2} suggests that its veracity will depend on whether $\sP_{HK, \lambda}$ behaves \emph{uniformly} on the nearby fibers of a finite type map, so that we can spread out $\sP_{HK,\lambda}$ from the generic to a general fiber. Luckily for us, this turns out to be the case, and we show that a fairly general class of finite type ring homomorphisms $\varphi:A\rightarrow R$ (see Setting \ref{general-setting} and Theorem \ref{thm:UBPH-K-general}) satisfies a uniform convergence result on the general fibers of $\varphi$ (see Definition \ref{UBPH-K-def}).  

The study of uniform behavior is a recurring theme in commutative algebra and algebraic geometry, and often connects seemingly unrelated fields. For example, Ein, Lazarsfeld and Smith used the theory of multiplier ideals to prove surprising uniform estimates on symbolic power and Abhyankar valuation ideals \cite{ELS01, ELS03}, and their techniques have found wide-ranging applications in the study of singularities in equal characteristic $0$, prime characteristic $p > 0$, and more recently, even mixed characteristic (see \cite{HH02, Har05, Tak06, LM09, JM12, Cut14, Li17, Dat17, Blu18, MS18} for some applications). Moreover, certain uniform Hilbert--Kunz estimates were at the heart of Tucker's proof of the existence of $F$-signature \cite{Tuc12}, which is an important prime characteristic invariant that behaves, in some aspects, like the mirror image of the Hilbert--Kunz multiplicity. Similar global uniform estimates were also used by Smirnov to prove the upper semi--continuity of the Hilbert--Kunz multiplicity \cite{Smi16}, a result that will be important in the proof of Theorem \ref{ehk-bertini}(2). Thus, we feel that our study of the uniform behavior of Hilbert--Kunz multiplicity of the fibers of a finite type map is interesting in its own right, and not just for its relevance to \ref{axiom:A2}.


Nilpotent elements must be handled delicately in the study of the Hilbert--Kunz multiplicity. For example,  \cite{Mon83} and \cite{Tuc12} first analyze $\ehk(M)$ for finitely generated modules over $R_{\red}$; more general statements then follow by viewing $M=F^{e_0}_* R$ as a module over $R_{\red}$, for $e_0 \gg 0$. Our proof of Theorem \ref{thm:UBPH-K-general} is in the spirit of this idea, but with the added difficulty of uniformly controlling the Hilbert--Kunz multiplicities of the general fibers of $\varphi$ using the relative Frobenius map instead of the absolute Frobenius map. An outline is as follows:
\begin{enumerate}
    \item Show a uniform convergence result on modules over general fibers of finite type maps $A\rightarrow R$ with equidimensional and geometrically reduced generic fibers (see Theorem \ref{prop:UBPH-K});
    \item Twist to the above setting. Specifically, for a finite type map $A\rightarrow R$ with equidimensional generic fibers, pick $e_0$ sufficiently large so that the generic fibers of $$A^{1/p^{e_0}}\rightarrow (R_{A^{1/p^{e_0}}})_{\red}$$ are geometrically reduced and equidimensional, and such that $F^{e_0}_*R$ is a module over $(R_{A^{1/p^{e_0}}})_{\red}$ (see Lemma \ref{lem:making-fibers-geometrically-reduced});
    \item Untwist the above. That is, show a uniform convergence result on general fibers of arbitrary finite type maps $A\rightarrow R$ with equidimensional generic fibers (see Theorem \ref{thm:UBPH-K-general}). 
\end{enumerate}

\section{Preliminaries}\label{preliminaries}
In this section we recall the basic facts about Hilbert--Kunz multiplicity that will be used in the article; additionally, we develop machinery on finite type ring homomorphisms that will be integral to our uniform convergence techniques in Section \ref{sec:A Uniform Bound on Multiplicity of fibers}. For a comprehensive overview of Hilbert--Kunz theory we recommend the survey article by Huneke \cite{Hun13}.

\subsection{Notation and prime characteristic preliminaries}

We assume all rings are commutative with a unit. If $R$ is a ring and $\fp \in\Spec R$, we denote by $\kappa(\fp)$ the residue field of $R$ at $\fp$. That is, $\kappa(\fp) = R_\fp/\fp R_\fp$.

If $R$ has prime characteristic $p>0$, the $e$-th Frobenius endomorphism $F^e:R\rightarrow R$ is defined by $r\mapsto r^{p^e}$. If $M$ is an $R$-module, $F^e_*M$ denotes the $R$-module which agrees with $M$ as an abelian group but whose $R$-module structure comes from restricting scalars via $F^e$. That is, if $r\in R$ and $m\in M$, $r\cdot F^e_*(m) = F^e_*(r^{p^e}m)$ where $F^e_*(m)$ is the element of $F^e_* M$ corresponding to $m$. We say that $R$ is \emph{$F$-finite} if $F^e$ is a finite map for some (equivalently, for all) $e>0$.

When $R$ is reduced (in particular, a domain), it is convenient to identify the $R$-algebra $F^e_*R$ with $R^{1/p^e}$, as we feel it makes base change arguments less notationally cumbersome. See Notation \ref{not:horrible} for more on our notational conventions. 

For a finitely generated $R$-module $M$, we use $\mu_R(M)$ to denote the minimal number of generators of $M$. If $(R,\fm,k)$ is local, then recall that Nakayama's lemma implies that $\mu_R(M) = \dim_k(k \otimes_R M)$. In particular, if $M, N$ are finitely generated modules over a local ring $R$, then $\mu_R(M \oplus N) = \mu_R(M) + \mu_R(N)$ because vector space dimension is additive over direct sums. 

\subsection{Hilbert--Kunz multiplicity of a local ring}
In what follows, assume that $(R,\fm,k)$ is a Noetherian local ring of prime characteristic $p>0$. We use $\ell_R(M)$ to denote the length of a finitely generated Artinian $R$-module $M$. Note that for any $e\in \N$, $F^e_*(-)$ is exact as it is just restriction of scalars. Applying this functor to a filtration of $M$ immediately yields
\begin{align}
    \ell_R(F^e_*M)=[k^{1/p^e}:k]\ell_R(M).\label{length-frob}
\end{align}

If $I\subseteq R$ is an ideal, denote by $I^{[p^e]}=\langle r^{p^e}\mid r\in I\rangle$. One easily checks that for any $R$-module $M$,
\begin{align}
    F^e_*(M/I^{[p^e]}M)\cong R/I\otimes_R F^e_* M.\label{bracket-tensor}
\end{align}

\begin{defthm}\cite[Theorem 1.8]{Mon83}
Let $(R,\fm,k)$ be a $d$-dimensional local Noetherian ring of characteristic $p>0$. Suppose $M$ is a finitely generated $R$-module, and $I$ is an $\fm$-primary ideal. Then
$$\ell_R(M/I^{[p^e]}M)=\ehk(I,M)p^{ed}+O(p^{e(d-1)}).$$
The limit $\ehk(I,M)=\lim\limits_{e\longrightarrow \infty}\frac{\ehk(I,M)}{p^{ed}}$ exists and is called the \emph{Hilbert--Kunz multiplicity of $M$ with respect to $I$.}
\end{defthm}

\noindent If $I=\fm$ we use $\ehk(M)$ instead of the more cumbersome notation $\ehk(\fm,M)$.

The Hilbert--Kunz multiplicity is known to satisfy the analogue of Lech's conjecture for the Hilbert--Samuel multiplicity by Hanes's thesis.

\begin{theorem}\cite[Theorem 5.2.6]{Han99}
\label{thm:HK-faithfully-flat-map}
Let $(R,\fm) \rightarrow (S,\fn)$ be a flat local homomorphism of Noetherian local rings. Then $\ehk(R) \leq \ehk(S)$.
\end{theorem}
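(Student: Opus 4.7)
The strategy is to compare the colengths $\ell_R(R/\fm^{[q]})$ and $\ell_S(S/\fn^{[q]})$ at each Frobenius power $q=p^e$, and then pass to the limit as $e\to\infty$. First I would pass to completions, using that $\ehk(R)=\ehk(\hat R)$ and that $\hat R\to\hat S$ remains flat local; so without loss of generality both rings may be assumed complete. Write $\delta:=\dim(S/\fm S)$, so that $\dim S=\dim R+\delta$ by flatness. The next step is to reduce to the case $\delta=0$: choose $y_1,\ldots,y_\delta\in\fn$ whose images in $S/\fm S$ form a system of parameters and that are an $R$-regular sequence on $S$. Such a sequence exists after enlarging the residue field via a faithfully flat base change (so that $\fn$ is not an embedded associated prime of $S/\fm S$ and prime avoidance applies); setting $S':=S/(y_1,\ldots,y_\delta)$, the induced map $R\to S'$ is flat local with zero-dimensional closed fiber, and a companion inequality $\ehk(S')\leq\ehk(S)$, established via a superficial-element-style length comparison, completes the reduction.

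In the $\delta=0$ case, $\fm S$ is $\fn$-primary and $\dim R=\dim S=d$. The length identity
\[
\ell_S(S/\fm^{[q]}S)=\ell_R(R/\fm^{[q]})\cdot\ell_S(S/\fm S),
\]
which encodes the flatness of the extension, translates into the exact equality $\ehk(\fm S,S)=\ehk(R)\cdot\ell_S(S/\fm S)$. The task is then to convert this into a statement about $\ehk(\fn,S)=\ehk(S)$. The natural tool is the characteristic-$p$ multinomial identity $(\sum a_i s_i)^{p^{e}}=\sum a_i^{p^{e}}s_i^{p^{e}}$, which, combined with a choice of $r_0\geq 1$ satisfying $\fn^{r_0}\subseteq\fm S$ and an integer $E$ with $p^E\geq r_0$, produces a Frobenius-compatible containment of the form $\fn^{[qp^{E}]}\subseteq\fm^{[q]}S$. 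Feeding this into the length identity and passing to the limit in $q$ yields the desired $\ehk(R)\leq\ehk(S)$.

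The main obstacle is this final comparison. From the monotonicity $\fm S\subseteq\fn$ one only obtains $\ehk(\fn,S)\leq\ehk(\fm S,S)$, and so $\ehk(S)\leq\ehk(R)\cdot\ell_S(S/\fm S)$, which is in the wrong direction. A crude Artin--Rees bound (even the Frobenius-twisted version sketched above) loses a multiplicative factor of $p^{Ed}/\ell_S(S/\fm S)$ and therefore cannot give the sharp inequality $\ehk(R)\leq\ehk(S)$. Extracting the correct constant requires a genuinely Hilbert--Kunz-theoretic argument controlling the asymptotic ratio of $\ell_S(S/\fn^{[q]})$ to $\ell_S(S/\fm^{[q]}S)$ as $q\to\infty$—for instance, via an associativity-type formula or an analysis of the associated graded of $\fn$ relative to $\fm S$—so that the factor of $\ell_S(S/\fm S)$ appearing in $\ehk(\fm S,S)$ is absorbed by $\ehk(\fn,S)$ rather than lost in the estimate.
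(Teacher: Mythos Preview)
The paper does not prove this theorem; it is quoted verbatim from Hanes's thesis \cite[Theorem 5.2.6]{Han99} and used as a black box (in the proof of Theorem~\ref{theorem:a2}(3),(4)). So there is no ``paper's own proof'' to compare against.

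As for your attempt on its own merits: it is not a proof but an outline with two acknowledged or unacknowledged gaps. The first is the reduction step. You assert a ``companion inequality'' $\ehk(S')\le\ehk(S)$ for $S'=S/(y_1,\dots,y_\delta)$ via a ``superficial-element-style length comparison,'' but no such inequality is available off the shelf for Hilbert--Kunz multiplicity; unlike the Hilbert--Samuel case, cutting by a regular element does not in general preserve or lower $\ehk$, and making this precise would already require most of the work you are trying to avoid. The second gap you identify yourself: in the zero-dimensional-fiber case you only obtain $\ehk(S)\le\ehk(R)\cdot\ell_S(S/\fm S)$, which is the wrong direction, and your Frobenius-twisted Artin--Rees bound loses a factor of $p^{Ed}/\ell_S(S/\fm S)$. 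Your closing paragraph correctly diagnoses that something sharper is needed, but does not supply it.

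Hanes's actual argument does not proceed by reducing to an Artinian closed fiber. The key idea is to compare $\ell_S(S/\fn^{[q]})$ directly with $\ell_R(R/\fm^{[q]})$ using that, by flatness, $S/\fm^{[q]}S$ is free over $R/\fm^{[q]}$ on any lift of a $k$-basis of $S/\fm S$; one then bounds the number of those basis elements that survive modulo $\fn^{[q]}$ by at most $p^{e\delta}$ using a filtration of $S/\fm S$ by parameter ideals. This gives $\ell_R(R/\fm^{[q]})\cdot p^{e\delta}\le\ell_S(S/\fn^{[q]})$ up to lower-order terms, and dividing by $p^{e\dim S}=p^{e(\dim R+\delta)}$ yields the inequality. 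The point is that the factor $p^{e\delta}$ appears on the correct side from the outset, rather than having to be recovered after the fact.
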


\subsection{Global Hilbert--Kunz multiplicity}\label{subsec:Global Hilbert--Kunz multiplicity}
For a Noetherian ring $R$, we define 
\begin{equation}\label{def:gamma}
\gamma(R) \coloneqq \max\{\log_p [\kappa(\fq)^{1/p}:\kappa(\fq)]\mid \fq\in\min(R)\},
\end{equation}
which features in the computation of local Hilbert--Kunz multiplicity in the following manner:

\begin{lemma}
\label{lem:gamma-local-HK}
Let $(R,\fm, k)$ be an $F$-finite Noetherian local ring of prime characteristic $p > 0$, and let $M$ be a finitely generated $R$-module. Then for any $e > 0$, we have
\[
\frac{\ell_R(M/\fm^{[p^e]}M)}{p^{e\dim(R)}} = \frac{\mu_R(F^e_*M)}{p^{e\gamma(R)}}.
\]
In particular, $$
\ehk(M) = \lim_{e \longrightarrow \infty} \frac{\mu_R(F^e_*M)}{p^{e\gamma(R)}}.$$
\end{lemma}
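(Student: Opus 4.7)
The plan is to compute $\dim_k F^e_*(M/\fm^{[p^e]}M)$ in two distinct ways, and then use Kunz's dimension formula for $F$-finite local rings to evaluate $\gamma(R)$ explicitly.

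By the canonical isomorphism recorded in (\ref{bracket-tensor}), $F^e_*(M/\fm^{[p^e]}M) \cong k \otimes_R F^e_*M$; as $\fm$ annihilates this module, it is a $k$-vector space. Its $k$-dimension equals $\mu_R(F^e_*M)$ by Nakayama's lemma applied to the right-hand side, while on the left-hand side it equals the $R$-length $\ell_R(F^e_*(M/\fm^{[p^e]}M))$, because length over $R$ and dimension over $k = R/\fm$ coincide on $k$-vector spaces. Now applying (\ref{length-frob}) to the finite-length module $M/\fm^{[p^e]}M$ gives
\[
\ell_R(F^e_*(M/\fm^{[p^e]}M)) = [k^{1/p^e}:k] \cdot \ell_R(M/\fm^{[p^e]}M),
\]
and multiplicativity of degrees in the tower $k \subseteq k^{1/p} \subseteq \cdots \subseteq k^{1/p^e}$ yields $[k^{1/p^e}:k] = p^{e\log_p[k^{1/p}:k]}$. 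Combining these observations delivers the master identity
\[
\mu_R(F^e_*M) = p^{e\log_p[k^{1/p}:k]} \cdot \ell_R(M/\fm^{[p^e]}M).
\]

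It therefore remains to identify $\gamma(R) = \dim R + \log_p[k^{1/p}:k]$, after which dividing the master identity by $p^{e\gamma(R)}$ produces the claimed equality. For this I appeal to Kunz's dimension formula for $F$-finite local rings: for every minimal prime $\fq$ of $R$, the $F$-finite domain $R/\fq$ with fraction field $\kappa(\fq)$ satisfies
\[
\log_p[\kappa(\fq)^{1/p}:\kappa(\fq)] = \dim(R/\fq) + \log_p[k^{1/p}:k].
\]
Taking the maximum over $\fq \in \min(R)$ and invoking $\dim R = \max_{\fq \in \min(R)} \dim(R/\fq)$ gives the required expression for $\gamma(R)$. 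The ``in particular'' assertion is then immediate upon letting $e \to \infty$ and using the definition of $\ehk(M)$.

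I do not expect any real obstacle here: the only substantive ingredient is Kunz's dimension formula, and the rest of the argument is a straightforward translation between $R$-length and $k$-vector space dimension on a module annihilated by $\fm$. The mild subtlety worth noting is that $\gamma(R)$ is defined via a maximum over minimal primes, but since $\log_p[k^{1/p}:k]$ is a constant independent of $\fq$, the maximum distributes correctly onto the $\dim(R/\fq)$ term and recovers $\dim R$ even when $R$ fails to be equidimensional.
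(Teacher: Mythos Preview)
Your proof is correct and follows essentially the same approach as the paper's: both derive the identity $\mu_R(F^e_*M) = [k^{1/p^e}:k]\,\ell_R(M/\fm^{[p^e]}M)$ from (\ref{length-frob}) and (\ref{bracket-tensor}), and both establish $p^{e\gamma(R)} = [k^{1/p^e}:k]\,p^{e\dim(R)}$ via Kunz's formula (which the paper cites as Proposition~\ref{prop:p-degree}(1)). Your version is simply more explicit about the two-way computation of $\dim_k F^e_*(M/\fm^{[p^e]}M)$ and about the passage from the maximum over minimal primes to $\dim R$, but the underlying argument is the same.
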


\begin{proof}
The lemma follows using the identity $\mu_R(F^e_*M) = [k^{1/p^e}:k]\ell_R(M/\fm^{[p^e]}M)$ which is a consequence of (\ref{length-frob}), and the identity $
p^{e\gamma(R)} = [k^{1/p^e}:k]p^{e\dim(R)}$. The latter follows by Proposition \ref{prop:p-degree}(1) applied to a minimal prime of $R$ and the definition of $\gamma(R)$.
\end{proof}

\begin{remark}
$F$-finiteness of $R$ is essential in Lemma \ref{lem:gamma-local-HK} because without it, $F^e_*M$ will not be a finitely generated $R$-module even if $M$ is a finitely generated $R$-module.
\end{remark}

De Stefani, Polstra and Yao's insight is that the previous lemma globalizes, yielding a robust notion of Hilbert--Kunz multiplicity for non-local $F$-finite rings.

\begin{defthm}\cite[Theorem 3.16]{DSPY19}
\label{defthm:global-HK}
If $R$ is an $F$-finite Noetherian ring of prime characteristic $p > 0$ (not necessarily local), then for any finitely generated $R$-module $M$, the limit
\[
\lim\limits_{e\longrightarrow \infty}\frac{\mu(F^e_* M)}{p^{e\gamma(R)}}
\]
exists and equals $\sup\{\ehk(M_\fp)\mid \fp\in Z_R\}$, where $Z_R=\{\fp\in\Spec R\mid\height(\fp)+\log_p [\kappa(\fp)^{1/p}:\kappa(\fp)]=\gamma(R)\}$. We call this limit, denoted $\ehk(M)$, the \emph{(global) Hilbert--Kunz multiplicity of $M$}.
\end{defthm}

\subsection{Geometrically reduced rings and schemes}
Recall that if $X$ is a scheme over a field $k$, then $X$ is \emph{geometrically reduced} over $k$ if the following equivalent conditions hold (see \cite[\href{https://stacks.math.columbia.edu/tag/035X}{Tag 035X}]{stacks-project}):
\begin{enumerate}
    \item For every field extension $k \subset k'$, $X \times_{\Spec(k)} \Spec(k')$ is reduced.
    \item For every finite purely inseparable extension $k \subset k'$, $X \times_{\Spec(k)} \Spec(k')$ is reduced.
    \item If $k_{perf}$ is the perfect closure of $k$, then $X \times_{\Spec(k)} \Spec(k_{perf})$ is reduced.
\end{enumerate}
Thus, every reduced scheme over a field of characteristic $0$ is automatically geometrically reduced over that field, and the notion of a geometrically reduced scheme diverges from the notion of reduced scheme only when the ground field has positive prime characteristic.

\begin{notation}
As is customary, when $X$ is a scheme over a ring $A$, and $B$ is an $A$-algebra, then the notations $X \otimes_{\Spec(A)} \Spec(B), X \otimes_A B$ and $X_B$ are all used synonymously. Moreover, if $R$ is also an $A$-algebra, then $R_B$ denotes $R\otimes_A B$.
\end{notation}

\subsubsection{{Geometrically reduced base extensions}}\label{subsubsec:Making schemes geometrically reduced}
The following result is an essential ingredient in passing from maps with geometrically reduced fibers to ones with arbitrary fibers in Section \ref{sec:A Uniform Bound on Multiplicity of fibers}.

\begin{Proposition}
\label{Kevins-claim}
Let $A$ be a domain (not necessarily Noetherian) of prime characteristic $p > 0$ and let $X$ be a scheme of finite type over $\Spec(A)$. Then there exists $e > 0$ such that
\[
\big{(}X_{A^{1/p^e}}\big{)}_{\red} \rightarrow \Spec(A^{1/p^e})
\]
has geometrically reduced generic fiber.
\end{Proposition}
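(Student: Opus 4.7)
The plan is to reduce to a statement about a single finite type $K$-algebra, where $K := \Frac(A)$, and then invoke the classical fact that any finitely generated field extension of $K$ of characteristic $p$ becomes separable after a suitable purely inseparable base change of $K$.

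First, since the statement is local on $X$, I would reduce to the affine case $X = \Spec(R)$. Because reduction commutes with localization and $\Frac(A^{1/p^e}) = K^{1/p^e}$, the generic fiber of $(X_{A^{1/p^e}})_{\red} \to \Spec(A^{1/p^e})$ is canonically identified with
\[
(R_K \otimes_K K^{1/p^e})_{\red},
\]
where $R_K := R \otimes_A K$ is a finite type $K$-algebra. The task thus becomes: choose $e$ so that this ring is geometrically reduced over $K^{1/p^e}$.

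Next, let $\fp_1,\dots,\fp_n$ be the minimal primes of $R_K$, with residue fields $L_i := \Frac(R_K/\fp_i)$, each a finitely generated extension of $K$. Because $K^{1/p^e}/K$ is purely inseparable, each $L_i \otimes_K K^{1/p^e}$ is a local ring (over $L_i$, the defining polynomials $x_j^{p^{e_j}} - a_j$ of $K^{1/p^e}/K$ are powers of irreducibles) whose reduction is the compositum field $L_i \cdot K^{1/p^e}$ taken inside a fixed algebraic closure of $L_i$. A short descent argument, using flatness of $R_K/\fp_i \to (R_K/\fp_i) \otimes_K K^{1/p^e}$ and the fact that $R_K/\fp_i$ is a domain, then shows that the minimal primes of $R_K \otimes_K K^{1/p^e}$ correspond bijectively to those of $R_K$, and that the residue fields at the generic points of $(R_K \otimes_K K^{1/p^e})_{\red}$ are exactly $L_1 \cdot K^{1/p^e}, \dots, L_n \cdot K^{1/p^e}$. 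By the standard criterion that a reduced finite type scheme over a field is geometrically reduced iff its residue fields at generic points are separable over that field, it suffices to arrange each $L_i \cdot K^{1/p^e}$ to be separable over $K^{1/p^e}$.

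For this I would invoke the classical theorem that every finitely generated field extension $L/K$ in characteristic $p$ admits some $e_0 \geq 0$ for which $L \cdot K^{1/p^{e_0}}$ has a separating transcendence basis over $K^{1/p^{e_0}}$; such a basis remains separating under any further purely inseparable base change (the transcendence elements stay transcendental, and the separability of minimal polynomials of the algebraic generators is preserved), so the separability persists for every $e \geq e_0$. Setting $e := \max_i e_{0,i}$ completes the argument. The step I expect to require the most care is the identification of the minimal primes of $R_K \otimes_K K^{1/p^e}$ (and their residue fields) in terms of those of $R_K$, which rests crucially on $K^{1/p^e}/K$ being purely inseparable; once this bookkeeping is in place, the remaining input is the classical structure theorem for finitely generated extensions in characteristic $p$.
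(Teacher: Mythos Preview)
Your argument is correct and reaches the same conclusion as the paper, but by a different route. The paper simply invokes \cite[Proposition (4.6.6)]{EGAIV_II} (recorded as Lemma \ref{EGA-rocks}(2)) to produce a finite purely inseparable extension $K'/K$ with $(X_{K'})_{\red}$ geometrically reduced, embeds $K'$ into some $K^{1/p^e}$, and then uses stability of geometric reducedness under field extension (Lemma \ref{EGA-rocks}(1)) to finish. You instead unpack what that EGA result says in this situation: you pass to the residue fields $L_i$ at the generic points of $R_K$, use the criterion that a reduced finite type scheme is geometrically reduced precisely when those residue fields are separable, and then appeal to the classical fact that each finitely generated $L_i/K$ acquires a separating transcendence basis after a suitable $p$-power base change. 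In effect you are reproving the needed special case of EGA IV (4.6.6) by hand. The payoff is a more self-contained and concrete argument; the cost is that you have to do the bookkeeping on minimal primes under purely inseparable base change yourself, which the paper avoids entirely.

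Two small points worth tightening. First, your reduction ``since the statement is local on $X$'' needs the observation that $X$, being of finite type over an affine scheme, is quasi-compact, so it is covered by finitely many affines and you can take the maximum of the resulting $e$'s; otherwise ``local on $X$'' alone does not give a single global $e$. Second, your justification for the bijection of minimal primes under $-\otimes_K K^{1/p^e}$ is cleaner if you just say that $\Spec(R_K \otimes_K K^{1/p^e}) \to \Spec(R_K)$ is a homeomorphism because $K \hookrightarrow K^{1/p^e}$ is purely inseparable; the flatness you mention is not really the operative fact.
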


Proposition \ref{Kevins-claim} is formal consequence of a general field theory result that we first summarize for the convenience of the reader:

\begin{Lemma}
\label{EGA-rocks}
Let $X$ be a scheme over a field $k$ of arbitrary characteristic. Then we have the following:
\begin{enumerate}
	\item \cite[Proposition (4.6.5)(i)]{EGAIV_II} If $K$ is a field extension of $k$, then $X$ is geometrically reduced over $k$ if and only if $X_K$ is geometrically reduced over $K$.
	\item \cite[Proposition (4.6.6)]{EGAIV_II} If $X$ is of finite type over $k$, then there exists a finite, purely inseparable extension $k'$ of $k$ such that $(X_{k'})_{\red}$ is geometrically reduced over $k'$.
\end{enumerate}
\end{Lemma}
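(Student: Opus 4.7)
The plan is to leverage the following characterization of geometric reducedness: a $k$-scheme $Y$ is geometrically reduced over $k$ if and only if $Y_{k_{perf}}$ is reduced, where $k_{perf}$ is the perfect closure of $k$. Together with two standard facts---reducedness passes to subrings, and $A \otimes_k k_{perf}$ is Noetherian when $A$ is a finitely generated $k$-algebra---this characterization reduces both parts of the lemma to manipulations of nilradicals.

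Part (1) is essentially formal. The ``only if'' direction is transitivity: every extension of $K$ is an extension of $k$, so reducedness of base changes transfers. For ``if,'' applying the characterization to $X_K$ gives that $X_{K_{perf}} = X_{k_{perf}} \otimes_{k_{perf}} K_{perf}$ is reduced; since $k_{perf} \hookrightarrow K_{perf}$ is faithfully flat, on any affine chart of $X_{k_{perf}}$ the coordinate ring embeds into its base change to $K_{perf}$, forcing reducedness of $X_{k_{perf}}$. The characterization then yields that $X$ is geometrically reduced over $k$.

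For part (2), I would first reduce to the affine case by covering $X$ with finitely many affines, applying the affine version to each to obtain finite purely inseparable extensions $k_i'/k$, and then replacing them all by their compositum $k'$ (still finite purely inseparable). Geometric reducedness is local, and part (1) ensures it is preserved when the base field is enlarged. So assume $X = \Spec A$ with $A$ finitely generated over $k$, and set $N = \nil(A_{k_{perf}})$. Since $A_{k_{perf}}$ is Noetherian, $N$ is finitely generated; since $A_{k_{perf}}$ is the filtered colimit of $A_{k'}$ over finite purely inseparable subextensions $k' \subseteq k_{perf}$, a finite generating set of $N$ can be arranged to lie in a single such $A_{k'}$. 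These generators are nilpotent in $A_{k'}$ (as $A_{k'} \hookrightarrow A_{k_{perf}}$), so $\nil(A_{k'}) \cdot A_{k_{perf}}$ contains the $A_{k_{perf}}$-ideal generated by them---which is $N$---and is itself contained in $N$ since nilpotents of $A_{k'}$ remain nilpotent in $A_{k_{perf}}$. Thus $\nil(A_{k'}) \cdot A_{k_{perf}} = N$, and
\[
(A_{k'})_{\red} \otimes_{k'} k_{perf} \;=\; A_{k_{perf}}/(\nil(A_{k'}) A_{k_{perf}}) \;=\; (A_{k_{perf}})_{\red},
\]
which is reduced. Faithful flatness of $k'_{perf} \hookrightarrow k_{perf}$ then exhibits $(A_{k'})_{\red} \otimes_{k'} k'_{perf}$ as a subring of the ring above, so it too is reduced, and the characterization gives that $(A_{k'})_{\red}$ is geometrically reduced over $k'$.

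The main obstacle is the bookkeeping in part (2): the generators of $N$ live in the ``big'' ring $A_{k_{perf}}$, but must be descended to a finite subextension $k'$ in a way that makes $\nil(A_{k'}) \cdot A_{k_{perf}}$ equal $N$ exactly, not some proper subideal. Noetherianness of $A_{k_{perf}}$ (so finitely many generators suffice) and the filtered-colimit structure of $k_{perf}$ over its finite purely inseparable subextensions (so finitely many elements descend to a single one) are the two ingredients that make this possible.
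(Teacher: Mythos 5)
Your proof is correct. Note that the paper does not actually prove this lemma: both parts are stated as citations to \cite{EGAIV_II} (Propositions (4.6.5)(i) and (4.6.6)), so there is no in-paper argument to compare against. Your write-up is a valid self-contained reconstruction along the standard lines of the EGA/Stacks proof: part (1) via the ``reduced after base change to the perfect closure'' criterion plus faithful flatness of $k_{perf} \hookrightarrow K_{perf}$, and part (2) via Noetherianity of $A_{k_{perf}}$ together with the fact that $k_{perf}$ is the filtered union of its finite purely inseparable subextensions, so that a finite generating set of $\nil(A_{k_{perf}})$ descends to some $A_{k'}$ and $\nil(A_{k'})\cdot A_{k_{perf}} = \nil(A_{k_{perf}})$ exactly. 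All the delicate points check out: $A_{k'} \hookrightarrow A_{k_{perf}}$ is injective by flatness over the field $k'$, the compositum of the finitely many $k_i'$ in the affine reduction is still finite purely inseparable, and the identification $(A_{k'})_{\red} \otimes_{k'} k_{perf} = (A_{k_{perf}})_{\red}$ is exactly the kind of nilradical bookkeeping the paper itself performs in the proof of Proposition \ref{Kevins-claim}. The only cosmetic remark is that your final appeal to faithful flatness of $k'_{perf} \hookrightarrow k_{perf}$ is unnecessary, since $k_{perf}$ \emph{is} the perfect closure of $k'$ (it is perfect and purely inseparable over $k'$), so the criterion applies directly to the ring you have already shown to be reduced.
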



Given Lemma \ref{EGA-rocks}, one deduces Proposition \ref{Kevins-claim} as follows:

\begin{proof}[Proof of Proposition \ref{Kevins-claim}]
Let $K$ be the fraction field of $A$. Note that for any $e > 0$, $K^{1/p^e}$ is the fraction field of of $A^{1/p^e}$.

By Lemma \ref{EGA-rocks}(2), there exists a finite, purely inseparable extension $K'$ of $K$ such that 
\[
((X_K) \otimes_K K')_{\red} = (X_{K'})_{\red}
\] 
is geometrically reduced over $K'$. Since $K'$ is a finite extension of $K$, there exists $e > 0$ such that $K' \subseteq K^{1/p^e}$. Then by Lemma \ref{EGA-rocks}(1),
\[
(X_{K'})_{\red} \otimes_{K'} K^{1/p^e}
\]
is geometrically reduced as a scheme over $K^{1/p^e}$. In particular, $(X_{K'})_{\red} \otimes_{K'} K^{1/p^e}$ is reduced, and so,
\[
(X_{K'})_{\red} \otimes_{K'} K^{1/p^e} = (X_{K'} \otimes_{K'} K^{1/p^e})_{\red} = (X_{K^{1/p^e}})_{\red}.
\]
Thus, $(X_{K^{1/p^e}})_{\red}$ is geometrically reduced over $K^{1/p^e}$. But, $(X_{K^{1/p^e}})_{\red}$ is precisely the generic fiber of $(X_{A^{1/p^e}})_{\red} \rightarrow \Spec(A^{1/p^e})$ because
\[
(X_{A^{1/p^e}})_{\red} \otimes_{A^{1/p^e}} K^{1/p^e} = (X_{A^{1/p^e}} \otimes_{A^{1/p^e}} K^{1/p^e})_{\red} = (X_{K^{1/p^e}})_{\red}.
\]
Here the first equality follows because $(X_{A^{1/p^e}})_{\red} \otimes_{A^{1/p^e}} K^{1/p^e}$ is reduced since affine locally, it is the localization of a reduced ring. This completes the proof.
\end{proof}


\subsubsection{{Geometrically reduced generic fiber and injectivity of relative Frobenius}}\label{subsubsec{Injectivity of relative Frobenius and reduced maps}}
Let $\varphi: A \rightarrow R$ be a homomorphism of rings of prime characteristic $p > 0$. Recall that the \emph{relative Frobenius}
\[
F_{R/A}: F_*A \otimes_A R \rightarrow F_*R
\]
is the map that sends $a \otimes r \mapsto \varphi(a)r^{p}$. A key property of the relative Frobenius as opposed to the absolute Frobenius is that the former behaves well with respect to base change \cite[Expos\'e XV, n$^\circ 2$, Proposition 1(b)]{SGA5}: if $C$ is an $A$-algebra then
\[
F_{R/A} \otimes_A C = F_{R \otimes_A C/ C}.
\]

Work of N.~Radu, M.~Andr\'e and T.~Dumitrescu shows that geometric properties of $\varphi$ are often related to algebraic properties of $F_{R/A}$. For example, as a generalization of Kunz's famous result characterizing regularity of a Noetherian ring in terms of flatness of the Frobenius map \cite[Theorem 2.1]{Kun69}, Radu and Andr\'e showed that when $A, R$ are Noetherian, then $\varphi$ is regular (i.e. $\varphi$ is flat with geometrically regular fibers) if and only if $F_{R/A}$ is a flat map \cite{Rad92, And93}. In a similar vein, Dumitrescu gave the following characterization of flat maps with geometrically reduced fibers, also known as \emph{reduced maps}:

\begin{Theorem}\cite[Theorem 3]{Dum95}
\label{thm:geometric-reducedness-rel-Frob}
Let $A \rightarrow R$ be a flat map of Noetherian rings of prime characteristic $p > 0$. Then the following are equivalent:
\begin{enumerate}
    \item $A \rightarrow R$ has geometrically reduced fibers.
    \item $F_{R/A}$ is pure as a map of $A$-modules (hence also injective).
\end{enumerate}
\end{Theorem}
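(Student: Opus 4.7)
For $(2) \Rightarrow (1)$, fix $\fp \in \Spec A$ and base change $F_{R/A}$ along $A \to \kappa(\fp)$. By the base change property of the relative Frobenius noted just above, this yields $F_{R_\fp/\kappa(\fp)}$, where $R_\fp \coloneqq R \otimes_A \kappa(\fp)$ is the fiber. Purity is preserved under base change: for any $\kappa(\fp)$-module $T$, tensoring $F_{R/A}$ over $A$ with $T$ agrees with tensoring $F_{R_\fp/\kappa(\fp)}$ over $\kappa(\fp)$ with $T$. Hence $F_{R_\fp/\kappa(\fp)}$ is $\kappa(\fp)$-pure, and over a field purity implies injectivity since every injection of vector spaces splits. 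It is classical that for a $k$-algebra $B$, $F_{B/k}$ is injective if and only if $B$ is geometrically reduced over $k$; applied to $B = R_\fp$ this yields (1).

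For the harder direction $(1) \Rightarrow (2)$, observe that $F_{R/A}$ is $F_*A$-linear, and $F_*A$-purity implies $A$-purity via the identification $F_{R/A} \otimes_A 1_S = F_{R/A} \otimes_{F_*A} 1_{F_*A \otimes_A S}$, which holds for any $A$-module $S$ by associativity of tensor. It therefore suffices to prove $F_{R/A}$ is $F_*A$-pure. Both $F_*A \otimes_A R$ and $F_*R$ are $F_*A$-flat: the former by flat base change of $A \to R$, the latter because $F_*$ preserves flatness of a map. The plan is to show (i) $F_{R/A}$ is injective, and (ii) $\coker(F_{R/A})$ is $F_*A$-flat; the resulting short exact sequence $0 \to F_*A \otimes_A R \to F_*R \to \coker(F_{R/A}) \to 0$ then remains exact after tensoring with any $F_*A$-module, which is exactly $F_*A$-purity.

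Both (i) and (ii) reduce to fiberwise injectivity of $F_{R/A}$ over $F_*A$. Identifying $\Spec F_*A$ with $\Spec A$ (same underlying set, since $F_*A = A$ as a ring) and unpacking the Frobenius-twisted $A$-algebra structure on $\kappa_{F_*A}(\fp')$ (the residue field at $\fp'$ agrees with $\kappa(\fp)$ as a ring, but its $A$-algebra structure composes $A \to \kappa(\fp)$ with the absolute Frobenius), a direct computation shows that $F_{R/A} \otimes_{F_*A} \kappa_{F_*A}(\fp')$ agrees, up to this twist, with the relative Frobenius $F_{R_\fp/\kappa(\fp)}$. The latter is injective by the field-case criterion applied to the geometrically reduced fiber $R_\fp$. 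Property (ii) then follows via the local criterion of flatness applied to $\coker(F_{R/A})$, using the long exact $\Tor^{F_*A}_\bullet$ sequence together with $F_*A$-flatness of $F_*R$. Property (i) requires promoting fiberwise injectivity to global injectivity; this one achieves by further localizing at primes of $R$ using the additional $R$-linearity of $F_{R/A}$, combined with injectivity of the absolute Frobenius on reduced local rings (guaranteed here by the reducedness of the fibers together with flatness of $A \to R$).

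\emph{Main obstacle.} The chief difficulty is the bookkeeping of the Frobenius twist when matching fibers of $F_{R/A}$ over $\Spec A$ with those over $\Spec F_*A$: the residue fields at corresponding primes are isomorphic as abstract fields but carry different $A$-algebra structures, and one must verify that the fiberwise map over $F_*A$ genuinely recovers the relative Frobenius of the original $A$-fiber, which is the precise place at which the hypothesis of geometrically reduced fibers is invoked.
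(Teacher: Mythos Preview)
The paper does not prove this statement; it is quoted from \cite{Dum95} without proof. So there is no ``paper's own proof'' to compare against, and I will simply assess your argument on its own terms.

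Your direction $(2)\Rightarrow(1)$ is fine: base change of $F_{R/A}$ along $A\to\kappa(\fp)$ yields the relative Frobenius of the fiber, purity is stable under base change, and over a field injectivity of the relative Frobenius is the classical criterion for geometric reducedness.

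The direction $(1)\Rightarrow(2)$ has a genuine gap in step~(i). You assert that reducedness of the fibers together with flatness of $A\to R$ makes the relevant local rings of $R$ reduced, so that the absolute Frobenius is injective there. This is false without assuming $A$ reduced: take $A=R=k[\epsilon]/(\epsilon^2)$ with the identity map. The unique fiber is $\Spec(k)$, geometrically reduced, and $A\to R$ is flat, yet $R$ is not reduced and its absolute Frobenius has kernel. (The theorem still holds in this example---$F_{R/A}$ is the identity---but your argument for (i) does not.) After localizing at $\fq\in\Spec R$ and $\fp=\fq\cap A$ you are in exactly this situation whenever $A_\fp$ has nilpotents, so the appeal to ``absolute Frobenius on reduced local rings'' does not go through.

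There is a clean repair that in fact bypasses the need to prove (i) and (ii) separately. You have already identified $F_{R/A}\otimes_{F_*A}\kappa(\fp')$ with a relative Frobenius of a geometrically reduced fiber, hence it is injective. Since $M=F_*A\otimes_A R$ and $N=F_*R$ are both $F_*A$-flat, tensoring the inclusion $F_*A/\fp'\hookrightarrow\kappa(\fp')$ shows $F_{R/A}\otimes_{F_*A} F_*A/\fp'$ is injective for every prime $\fp'$. Now use that $F_*A$ is Noetherian: every finitely generated $F_*A$-module $T$ has a finite filtration with successive quotients $F_*A/\fp_i$, and a five-lemma induction (using flatness of \emph{both} $M$ and $N$ to keep the rows exact) shows $F_{R/A}\otimes T$ is injective for all such $T$, hence for all $T$ by passing to filtered colimits. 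This is exactly $F_*A$-purity, and injectivity of $F_{R/A}$ is then a special case ($T=F_*A$). With this modification your overall strategy is correct.
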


The injectivity of $F_{R/A}$ will be used implicitly in the proof of Theorem \ref{prop:UBPH-K}. One may wonder if one can weaken $A \rightarrow R$ having geometrically reduced fibers if all one cares about is the injectivity of $F_{R/A}$. This turns out to be the case, at least generically.

\begin{Corollary}
\label{cor:rel-Frob-injective}
Suppose $\varphi: A \rightarrow R$ is a homomorphism of Noetherian rings of characteristic $p > 0$ such that $A$ is a domain with fraction field $K$. Consider the following statements:
\begin{enumerate}
    \item $F_{R/A}$ is injective.
    \item The generic fiber $R_K$ is geometrically reduced over $K$.
    \item There exists $f \in A$ such that $\varphi_f: A_f \rightarrow R_f$ is flat and has geometrically reduced fibers.
\end{enumerate}
Then (1) $\Leftrightarrow$ (2) if $\varphi$ is flat, and (2) $\Leftrightarrow$ (3) if $\varphi$ is of finite type.
\end{Corollary}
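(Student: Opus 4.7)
The plan is to prove the two equivalences separately, leveraging the base-change compatibility $F_{R/A} \otimes_A K = F_{R_K/K}$ of the relative Frobenius. The map $A\to K$ is a localization, so $K\to R_K$ is always flat, which allows me to apply Theorem \ref{thm:geometric-reducedness-rel-Frob} to it directly: over the field $K$, any $K$-linear map is pure iff injective (since $-\otimes_K -$ is exact), so condition (2) is equivalent to the injectivity of $F_{R_K/K}$.

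For (1)$\Leftrightarrow$(2) under the flatness of $\varphi$, the direction (1)$\Rightarrow$(2) is immediate from exactness of the localization $-\otimes_A K$ applied to the injection $F_{R/A}$. The reverse direction is subtler; my plan is to first show that $F_*A \otimes_A R$ is torsion-free as an $A$-module. Identifying $F_*A$ with $A$ as a ring (so it is a domain), the $A$-algebra structure map $A \to F_*A$ is, up to this identification, the Frobenius endomorphism of $A$, which is injective since $A$ is a domain. Because $\varphi$ is flat, base change along this ring map $A \to F_*A$ yields a flat ring map $F_*A \to F_*A \otimes_A R$, and hence $F_*A \otimes_A R$ is torsion-free over the domain $F_*A$. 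Multiplication by any nonzero $a \in A$ on $F_*A \otimes_A R$ then agrees with multiplication by a nonzero element of $F_*A$, so it is injective, giving $A$-torsion-freeness. Consequently the localization map $F_*A \otimes_A R \to F_*K \otimes_K R_K$ is injective, and a commutative square chase with horizontal arrows $F_{R/A}$, $F_{R_K/K}$ and vertical arrows the localization maps at $K$ forces $F_{R/A}$ to be injective.

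For (2)$\Leftrightarrow$(3) under the finite-type assumption on $\varphi$, the direction (3)$\Rightarrow$(2) is immediate since the generic fiber of $\varphi_f$ is $R\otimes_A K = R_K$. For (2)$\Rightarrow$(3), my plan is to combine two standard spreading-out results: generic flatness produces some $f_1 \in A\setminus\{0\}$ with $\varphi_{f_1}: A_{f_1}\to R_{f_1}$ flat, and the openness of the geometrically-reduced-fiber locus for a flat, finite-type map (EGA IV$_3$, 12.1.1; equivalently Stacks Project, Tag 0C0E), combined with (2), yields a basic open neighborhood of the generic point of $\Spec A_{f_1}$ over which the fibers are geometrically reduced. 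Clearing denominators and taking $f$ to be a suitable multiple of $f_1$ produces the desired element.

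I expect the main obstacle to be the $A$-torsion-freeness argument in (2)$\Rightarrow$(1), which hinges on correctly disentangling the twisted $A$- and $F_*A$-module structures on $F_*A \otimes_A R$; once this is in place, everything else reduces to a diagram chase and invocations of standard spreading-out theorems.
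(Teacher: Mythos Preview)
Your proposal is correct and follows essentially the same route as the paper. The only notable difference is in the direction (2)$\Rightarrow$(1): where you establish injectivity of the localization map $F_*A \otimes_A R \to F_*K \otimes_K R_K$ via an $A$-torsion-freeness argument (using that $F_*A \otimes_A R$ is flat over the domain $F_*A$), the paper obtains the same conclusion in one line by observing that the injection $F_*A \hookrightarrow F_*K$ remains injective after applying $-\otimes_A R$ since $R$ is $A$-flat. Your argument is sound but more elaborate than necessary; the paper's version avoids disentangling the twisted module structures altogether. For (2)$\Leftrightarrow$(3), your two-step ``generic flatness then openness of the geometrically-reduced-fiber locus'' is equivalent to the paper's direct citation of generic freeness together with the spreading-out result \cite[\href{https://stacks.math.columbia.edu/tag/0578}{Tag 0578}]{stacks-project}.
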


\begin{proof}
We first prove the equivalence of (1) and (2) assuming $\varphi$ is flat.

(1) $\Rightarrow$ (2): If $F_{R/A}$ is injective, then so is $F_{R_K/K} = F_{R/A} \otimes_A K$. Since $K$ is a field, $F_{R_K/K}$ is automatically $K$-pure, and so, by Theorem \ref{thm:geometric-reducedness-rel-Frob}, $R_K$ is a geometrically reduced $K$-algebra. 

(2) $\Rightarrow$ (1): Since $R_K$ is geometrically reduced over $K$, by Theorem \ref{thm:geometric-reducedness-rel-Frob}, 
\[
F_{R_K/K}: F_*K \otimes_A R = F_*K \otimes_K R_K \rightarrow F_*R_K
\]
is injective. We also have a commutative diagram
\[
\begin{tikzcd}
  F_*A \otimes_A R \arrow[r, "F_{R/A}"] \arrow[d, hook]
    & F_*R \arrow[d] \\
  F_*K \otimes_A R \arrow[r, hook, "F_{R_K/K}"]
&F_*R_K \end{tikzcd},
\]
where the left vertical map is injective because $F_*A \rightarrow F_*K$ is injective (restriction of scalars of the localization map) and $R$ is $A$-flat. Thus, $F_{R/A}$ is also injective by commutativity of the above diagram.

Now suppose $\varphi$ is of finite type, but not necessarily flat. Then (2) $\Rightarrow$ (3) follows from generic freeness \cite[\href{https://stacks.math.columbia.edu/tag/051R}{Tag 051R}]{stacks-project} and spreading out of geometric reducedness \cite[\href{https://stacks.math.columbia.edu/tag/0578}{Tag 0578}]{stacks-project}. On the other hand, (3) $\Rightarrow$ (2) holds trivially.
\end{proof}


\subsection{Equidimensionality}
In this section we discuss several variants of the notion of equidimensionality for rings and schemes. Equidimensionality will play an essential role in our investigation of uniform behavior of Hilbert--Kunz multiplicity of the fibers of a finite type map (see Theorem \ref{prop:UBPH-K} and Theorem \ref{thm:UBPH-K-general}).

\begin{definition}
\label{def:variants-equidim}
Let $X$ be a Noetherian scheme such that $\dim(X) < \infty$. We say
\begin{enumerate}
\item $X$ is \emph{equidimensional} if all irreducible components of $X$ have the same dimension.

\item $X$ is \emph{locally equidimensional} if for all $x \in X$, $\Spec(\mathcal{O}_{X,x})$ is equidimensional.

\item $X$ is \emph{equicodimensional} if all minimal irreducible closed subsets of $X$ have the same codimension in $X$. 

\item $X$ is \emph{biequidimensional} if all maximal chains of irreducible closed subsets of $X$ have the same length.

\item $X$ is \emph{weakly biequidimensional} if $X$ is equidimensional, equicodimensional and catenary.
\end{enumerate}
If $A$ is a Noetherian ring of finite Krull dimension, we say $A$ is \emph{equidimensional} (resp. \emph{locally equidimensional}, \emph{equicodimensional}, \emph{(weakly) biequidimensional}) if $\Spec(A)$ has this property.
\end{definition}

\begin{remark}
\label{rem:biequidim}
{\*}
\begin{enumerate}
\item Of the various notions defined above, biequidimensionality is the most well-behaved, and biequidimensional schemes satisfy many of the pleasing topological properties of reduced and irreducible affine varieties. However, our definition of biequidimensionality follows \cite{Hei17} and differs from the standard reference \cite{EGAIV_I}. The latter claims that biequidimensionality and weak biequidimensionality coincide \cite[Proposition (14.3.3)]{EGAIV_I}, but this fails even for spectra of rings that are essentially of finite type over fields \cite[Example 3.3]{Hei17}. Furthermore, biequidimensional, but not weakly biequidimensional (\cite[Example 4.2]{Hei17}), schemes satisfy the \emph{dimension formula} \cite[Proposition 4.1]{Hei17}: if $Z \subset X$ is an irreducible closed subset, then
\[
\dim(Z) + \codim(Z,X) = \dim(X).
\]
Note biequidimensional schemes are weakly biequidimensional \cite[Lemma 2.1]{Hei17}.

\item If $X$ is equidimensional, catenary, with equicodimensional irreducible components, then $X$ is biequidimensional \cite[Lemma 2.2]{Hei17}. This implies that weak biequidimensionality coincides with biequidimensionality when $X$ is irreducible, and that equidimensional finite type schemes over a field $k$ are biequidimensional. In particular, equidimensional finite type $k$-schemes satisfy the dimension formula.

\item If $X$ is biequidimensional, then $X$ is locally (bi)equidimensional. For suppose $x \in X$, and we have two maximal chains of prime ideals of $\mathcal{O}_{X,x}$ of length $h_1$ and $h_2$. These maximal chains both terminate at the maximal ideal $\fm_{x}$ and give us two saturated chains $Y_{h_1} \subsetneq \dots \subsetneq Y_{0}$ and $Z_{h_2} \subsetneq \dots \subsetneq Z_{0}$ of irreducible closed subsets of $X$, where $Y_{h_1} = \overline{\{x\}} = Z_{h_2}$ and $Y_0, Z_0$ are irreducible components of $X$. Both chains can be completed to maximal ones (of equal length) using the same irreducible closed sets contained in $\overline{\{x\}}$. Therefore $h_1 = h_2$, and so, $\mathcal{O}_{X,x}$ is biequidimensional. 
\end{enumerate}
\end{remark}



Equidimensionality of finite type schemes over fields is preserved under arbitrary base field extensions; that is, an equidimensional finite type scheme over a field is `geometrically equidimensional.' This is highlighted in the following result.

\begin{proposition}
\label{prop:equidim-base-field-ext}
Let $X$ be a scheme which is of finite type over a field $k$. Let $K \supseteq k$ be any field extension of $k$ and $\pi: X_K \rightarrow X$ be the projection map. Then we have the following:
\begin{enumerate}
    \item $\pi$ is surjective and universally open.
    \item The map $Z \mapsto \overline{\pi(Z)}$ induces a surjective map
    \[
    \textrm{\{irreducible components of $X_K$\}} \twoheadrightarrow \textrm{\{irreducible components of $X$\}}.
    \]
    \item If $Z$ is an irreducible component of $X$ and $Z'$ is an irreducible component of $X_K$ such that $\overline{\pi(Z')} = Z$, then $\dim(Z') = \dim(Z)$.
    \item $X$ is (bi)equidimensional if and only if $X_K$ is (bi)equidimensional.
\end{enumerate}
\end{proposition}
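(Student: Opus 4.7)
My plan is to prove the four parts in sequence, with the dimension-preservation assertion (3) being the crux. For (1), the morphism $\pi$ is the base change of $\Spec(K) \to \Spec(k)$, which is faithfully flat (every field extension is). Base change preserves faithful flatness, so $\pi$ is faithfully flat and hence surjective. For universal openness, I would write $K = \varinjlim_i K_i$ as a filtered colimit of finitely generated $k$-subalgebras and use that each base change $X_{K_i} \to X$ is flat and of finite presentation, hence universally open by the standard theorem that flat morphisms of finite presentation are universally open; then descend an arbitrary basic open $D(f) \subseteq X_K$ to a stage $K_i$ containing the finitely many $K$-coefficients of $f$ to realize $\pi(D(f))$ as the image of an open under a universally open stage.

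For (2), the workhorse is the dimension formula for flat local maps of Noetherian local rings: for $x' \in X_K$ with $x = \pi(x')$,
\[
\dim \mathcal{O}_{X_K, x'} = \dim \mathcal{O}_{X, x} + \dim \mathcal{O}_{(X_K)_x, x'}.
\]
If $x' = \eta'$ is the generic point of an irreducible component $Z'$ of $X_K$, then $\dim \mathcal{O}_{X_K, \eta'} = 0$ forces both summands to vanish, so $x = \pi(\eta')$ is a generic point of an irreducible component $Z$ of $X$. A routine topological observation ($\phi(\overline{S}) \subseteq \overline{\phi(S)}$) then yields $\overline{\pi(Z')} = \overline{\{x\}} = Z$, showing the assignment is well-defined. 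Surjectivity is the mirror argument: given a component $Z$ of $X$ with generic point $\eta$, the fiber $(X_K)_\eta = \Spec(k(\eta) \otimes_k K)$ is nonempty, and any minimal prime $\eta'$ of it is, by the same dimension formula, the generic point of a component $Z'$ of $X_K$ with $\overline{\pi(Z')} = Z$.

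For (3), rather than analyze the structure of $k(Z) \otimes_k K$ directly, I plan to route through generic freeness. Pick an affine open $\Spec(A) \subseteq Z$ with $A$ a finitely generated $k$-domain, and a Noether normalization $k[x_1, \dots, x_d] \hookrightarrow A$ with $d = \dim Z$. By generic freeness, there is $f \in k[x_1, \dots, x_d] \setminus 0$ such that $A_f$ is free over $k[x_1, \dots, x_d]_f$. Base changing, $A_f \otimes_k K$ is free (in particular flat) over $K[x_1, \dots, x_d]_f$. Since $f \otimes 1$ is a non-zero-divisor on $A \otimes_k K$ (flat base change preserves the non-zero-divisor property from $A$), the open $D(f \otimes 1)$ contains every generic point of every irreducible component of $Z_K$, so minimal primes of $A \otimes_k K$ correspond bijectively to minimal primes of $A_f \otimes_k K$. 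For each such minimal prime $\mathfrak{p}$, flat going-down forces $\mathfrak{p} \cap K[x_1, \dots, x_d]_f = (0)$, so $(A_f \otimes_k K)/\mathfrak{p}$ is a finite injective extension of $K[x_1, \dots, x_d]_f$ and hence has Krull dimension $d$. Therefore each irreducible component of $Z_K$ has dimension $d = \dim Z$.

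Finally, (4) is a formal consequence: by Remark \ref{rem:biequidim}(2), biequidimensionality coincides with equidimensionality on finite type $k$-schemes (using universal catenarity of finite type $k$-algebras and equicodimensionality of integral components), and parts (2)--(3) provide a dimension-preserving surjection between the sets of irreducible components of $X_K$ and $X$. The central obstacle is (3): a finite injective map of rings need not contract minimal primes to minimal primes, so the plan deliberately routes through generic freeness to upgrade the relevant finite extension to a flat one on an open subset large enough to see every generic point, where the standard going-down argument applies.
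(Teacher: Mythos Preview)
Your proof is correct, and your treatments of (2) and (4) essentially match the paper's (which defers (1)--(2) to \cite[Corollary~5.45]{GW10}). The substantive divergence is in part (3). The paper argues in one line via transcendence degree: reducing to the affine case $X=\Spec(A)$, if $\fq$ and $\fp$ are the primes corresponding to the generic points of $Z'$ and $Z$, then
\[
\dim(Z')=\td_K(\kappa(\fq))=\dim_\eta X_K=\dim_\xi X=\td_k(\kappa(\fp))=\dim(Z),
\]
invoking the standard identities for $\dim_x$ on finite type schemes over a field and their invariance under field extension \cite[\href{https://stacks.math.columbia.edu/tag/00P1}{Tag 00P1}, \href{https://stacks.math.columbia.edu/tag/00P4}{Tag 00P4}]{stacks-project}. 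Your route through Noether normalization and generic freeness is longer but more self-contained: it avoids the black-box local-dimension identities and instead manufactures flatness on a dense open so that going-down applies, and your explicit acknowledgment that finite injective extensions need not contract minimal primes to minimal primes is exactly why the detour is necessary. Both approaches are standard; the paper's is slicker, yours is more elementary. One small point worth making explicit in your write-up: you compute the dimensions of the irreducible components of $Z_K$, so you should note (as you implicitly use) that any component $Z'$ of $X_K$ with $\overline{\pi(Z')}=Z$ is in fact a component of the closed subscheme $Z_K=\pi^{-1}(Z)$, since its generic point lies in the fiber over the generic point of $Z$.

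A minor caution on your sketch of universal openness in (1): when you descend $D(f)$ to $D(f_i)\subseteq X_{K_i}$ for a finitely generated $k$-subalgebra $K_i\subseteq K$, the transition map $\rho_i\colon X_K\to X_{K_i}$ is not surjective (its image is only the generic fiber over $\Spec K_i$), so $\pi(D(f))$ is not literally $\pi_i(D(f_i))$. The argument can be completed, but it takes a bit more care than your phrasing suggests; since the paper simply cites \cite{GW10} here, this is not a serious defect.
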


\begin{proof}
(1) and (2) follow from \cite[Corollary 5.45]{GW10}. For (3), we may assume without loss of generality that $X$ is affine, say $X = \Spec(A)$. Let $\fq$ be the prime ideal of $\Spec(A_K)$ corresponding to the generic point $\eta \in Z'$, and let $\fp$ be the prime ideal of $A$ corresponding to the generic point $\xi \in Z$. Then $\fq$ lies over $\fp$, and so, by \cite[\href{https://stacks.math.columbia.edu/tag/00P1}{Tag 00P1} and \href{https://stacks.math.columbia.edu/tag/00P4}{Tag 00P4}]{stacks-project}, it follows that
\[
\dim(Z') = \td_K(\kappa(\fq)) = \dim_\eta X_K = \dim_\xi X = \td_k(\kappa(\fp)) = \dim(Z),
\]
proving (3). Clearly (4) follows from (2) and (3) and the fact that equidimensionality implies biequidimensionality for finite type schemes over a field (Remark \ref{rem:biequidim}(2)). 
\end{proof}

\subsubsection{Equidimensionality and inseparability degrees of residue fields}
\label{subsubsec:Equidimensionality and $p$-degrees of residue fields}

Let $A$ be an $F$-finite Noetherian ring of prime characteristic $p > 0$. Then for any prime ideal $\fp \in \Spec(A)$, the residue field $\kappa(\fp)$ is also $F$-finite. It is therefore natural to study how the $p$-degrees $[\kappa(\fp)^{1/p^e}:\kappa(\fp)]$ vary for a fixed $e > 0$, as $\fp$ varies over $\Spec(A)$. Kunz showed that the $p$-degrees vary in a controlled manner provided one multiplies $[\kappa(\fp)^{1/p^e}:\kappa(\fp)]$ by $p^{e\dim(R_\fp)}$ \cite[Corollary 2.7]{Kun76}. However, his result is false in the generality stated. The remedy, as pointed out by Shepherd-Barron \cite[Remark on Pg. 562]{S-B78}, is to replace equidimensionality by local equidimensionality. We summarize the (correct) result for the reader's convenience.

\begin{Proposition}
\label{prop:p-degree}
Let $A$ be an $F$-finite Noetherian ring of prime characteristic $p > 0$. Let $\fp \subseteq \fq$ be two prime ideals of $A$. Then we have the following:
\begin{enumerate}
    \item \cite[Proposition 2.3]{Kun76} For any $e > 0$, 
    \[
    [\kappa(\fp)^{1/p^e}:\kappa(\fp)] = [\kappa(\fq)^{1/p^e}:\kappa(\fq)]p^{e\dim({A_\fq}/{\fp A_\fq})}.
    \]
    
    \item (c.f. \cite[Corollary 2.7]{Kun76} and \cite[Remark on Pg. 562]{S-B78}) If $A$ is locally equidimensional, then for any $e > 0$,
    \[
    [\kappa(\fp)^{1/p^e}:\kappa(\fp)]p^{e\dim(A_\fp)} = [\kappa(\fq)^{1/p^e}:\kappa(\fq)]p^{e\dim(A_\fq)}.
    \]
    Hence the function $\Spec(A) \rightarrow \mathbb{N}$ that maps $\fp \mapsto [\kappa(\fp)^{1/p^e}:\kappa(\fp)]p^{e\dim(A_\fp)}$ is constant on each irreducible (also connected) component of $\Spec(A)$.
    
    \item If $\Spec(A)$ is irreducible (hence $A$ is locally equidimensional), the constant value of the function $\Spec(A) \rightarrow \mathbb{N}$ from part (3) equals $[K^{1/p^e}:K]$, where $K$ is the residue field of the generic point of $A$.
\end{enumerate}
\end{Proposition}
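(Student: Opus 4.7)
The plan is to deduce parts (2) and (3) from (1) by bookkeeping with minimal primes of $A$.

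For part (1), I would simply invoke \cite[Proposition 2.3]{Kun76} applied to the $F$-finite Noetherian local domain $A_\fq/\fp A_\fq$, whose fraction field is $\kappa(\fp)$ and whose residue field is $\kappa(\fq)$. Its proof rests on the classical identity $[K^{1/p^e}:K] = [k^{1/p^e}:k]\,p^{ed}$ for an $F$-finite Noetherian local domain $(R,\fm,k)$ of dimension $d$ with fraction field $K$, which one obtains by comparing the generic rank of the finitely generated torsion-free $R$-module $R^{1/p^e}$ with its behavior after reduction modulo $\fm$.

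For part (2), I would pick a minimal prime $\fp_0$ of $A$ contained in $\fp$; since $\fp \subseteq \fq$, automatically $\fp_0 \subseteq \fq$ as well. Applying (1) to the chains $\fp_0 \subseteq \fp$ and $\fp_0 \subseteq \fq$ yields two expressions for $[\kappa(\fp_0)^{1/p^e}:\kappa(\fp_0)]$, hence the identity
\[
[\kappa(\fp)^{1/p^e}:\kappa(\fp)]\,p^{e\dim(A_\fp/\fp_0 A_\fp)} = [\kappa(\fq)^{1/p^e}:\kappa(\fq)]\,p^{e\dim(A_\fq/\fp_0 A_\fq)}.
\]
Local equidimensionality of $A$ at $\fp$ forces $\dim(A_\fp/\fp_0 A_\fp) = \dim(A_\fp)$, since $\fp_0 A_\fp$ is a minimal prime of the equidimensional Noetherian local ring $A_\fp$, and likewise $\dim(A_\fq/\fp_0 A_\fq) = \dim(A_\fq)$. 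Substituting gives the claimed equality. For constancy on each irreducible component $V(\fp_0)$, the same computation (with $\fp_0$ playing the role of the smaller prime) shows that the function's value at any $\fp \supseteq \fp_0$ equals $[\kappa(\fp_0)^{1/p^e}:\kappa(\fp_0)]$, which depends only on the component. For connected components, I would invoke the standard fact that in a Noetherian topological space any two irreducible components lying in a common connected component are linked by a finite chain of irreducible components whose consecutive members share a point; at any such shared point $\fr$, the function value equals the (constant) value on both components involved, propagating equality along the chain.

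For part (3), irreducibility of $\Spec A$ gives a unique minimal prime $\fp_0$, which forces every $\Spec(A_\fp)$ to have a unique minimal prime $\fp_0 A_\fp$ and hence to be equidimensional. So $A$ is locally equidimensional and part (2) applies; the constant value is most easily computed at $\fp_0$ itself, where $\dim(A_{\fp_0}) = 0$ and $\kappa(\fp_0) = K$ by definition of the generic point, yielding $[K^{1/p^e}:K]$. There is no single hard step: the work lies in the consistent bookkeeping required to reduce nested-prime questions to the minimal-prime case via local equidimensionality, and the most delicate point is the irreducible-versus-connected-component distinction in the constancy statement, which is handled by the chain argument above.
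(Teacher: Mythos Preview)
Your proposal is correct. The paper does not actually write out a proof of this proposition: parts (1) and (2) are stated with citations to \cite{Kun76} and \cite{S-B78}, and part (3) is left as an evident consequence. Your argument---reducing to a minimal prime $\fp_0$ and invoking local equidimensionality to replace $\dim(A_\fp/\fp_0 A_\fp)$ by $\dim(A_\fp)$---is exactly the standard derivation of (2) from (1), and your evaluation at the generic point for (3) is the intended one-line deduction. The chain argument for connected components is a nice touch that the paper does not spell out.
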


Proposition \ref{prop:p-degree} allows us to study the inseparability degrees of residue fields of finite extensions of $A$. 

\begin{Proposition}
\label{prop:constancy-Kunz-function}
Let $A$ be an $F$-finite, Noetherian domain of prime characteristic $p > 0$ with fraction field $K$, and let $A \subseteq R$ be a finite extension. Let $\Min(A)$ (resp. $\Min(R))$ denote the set of minimal primes of $A$ (resp. $R$). Then we have the following:
\begin{enumerate}
    \item If $\fp \in \Spec(R)$, then $\dim(R/\fp) = \dim(R) \Leftrightarrow \fp \cap A = (0)$.
    
    \item $R$ is equidimensional $\Leftrightarrow$ for all $\fp \in \Min(R)$, $\fp \cap A = (0)$.
    
    \item Suppose $A$ equicodimensional and $R$ is locally equidimensional. Then $A$ is biequidimensional, and if $\fp \in \Min(R)$, then for all $\fq \supseteq \fp$, 
    \[
    [\kappa(\fq)^{1/p^e}:\kappa(\fq)]p^{e\dim(R_\fq)} = [K^{1/p^e}:K]p^{e(\dim(R/\fp) - \dim(A))}.
    \]
    
    \item If $A$ equicodimensional and $R$ is equidimensional, then $R$ is biequidimensional.
    
    \item If $A$ is equicodimensional and $R$ is equidimensional, then $\Spec(R) \rightarrow \mathbb{N}$ mapping $\fq \mapsto [\kappa(\fq)^{1/p^e}:\kappa(\fq)]p^{e\dim(R_\fq)}$ is constant with value $[K^{1/p^e}:K]$.
    
    \item If $A$ is equicodimensional, $R$ is locally equidimensional and $\Spec(R) \rightarrow \mathbb{N}$ mapping $\fq \mapsto [\kappa(\fq)^{1/p^e}:\kappa(\fq)]p^{e\dim(R_\fq)}$ is constant, then $R$ is equidimensional.
    \end{enumerate}
\end{Proposition}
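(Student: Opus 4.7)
The proof proceeds through the six parts in order. For \textbf{(1)} and \textbf{(2)}, the key observation is that $A/(\fp \cap A) \hookrightarrow R/\fp$ is an injective finite extension of Noetherian domains, so $\dim(R/\fp) = \dim(A/(\fp \cap A))$ by the standard dimension theory of integral extensions. Since $A$ is a Noetherian domain, the basic inequality $\height(\mathfrak{q}) + \dim(A/\mathfrak{q}) \leq \dim(A)$ applied to $\mathfrak{q} = \fp \cap A$ shows that $\dim(A/(\fp \cap A)) = \dim A$ if and only if $\fp \cap A = (0)$, which gives (1). Then (2) is immediate since equidimensionality of $R$ means $\dim(R/\fp) = \dim(R)$ for every $\fp \in \Min(R)$.

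For \textbf{(3)}, first note that $A$ is biequidimensional by Remark \ref{rem:biequidim}(2): it is irreducible, catenary (being excellent, since $F$-finiteness implies excellence), and equicodimensional by hypothesis. In particular, $A$ satisfies the dimension formula $\height(\fq) + \dim(A/\fq) = \dim A$ for every $\fq \in \Spec A$. Now fix $\fp \in \Min(R)$ and $\fq \supseteq \fp$. Applying Proposition \ref{prop:p-degree}(2) to $R$ (locally equidimensional by hypothesis) along the chain $\fp \subseteq \fq$, and noting $\dim R_\fp = 0$, yields
\[
[\kappa(\fq)^{1/p^e} : \kappa(\fq)]\, p^{e\dim R_\fq} \;=\; [\kappa(\fp)^{1/p^e} : \kappa(\fp)] \;=\; [L^{1/p^e} : L],
\]
where $L := \Frac(R/\fp) = \kappa(\fp)$. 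Set $K' := \Frac(A/(\fp \cap A)) = \kappa(\fp \cap A)$, so that $L/K'$ is a finite field extension. The Frobenius isomorphism $L^{1/p^e} \xrightarrow{\sim} L$, $x \mapsto x^{p^e}$, restricts to $K'^{1/p^e} \xrightarrow{\sim} K'$, giving $[L^{1/p^e} : K'^{1/p^e}] = [L:K']$, and a tower-law computation then forces $[L^{1/p^e} : L] = [K'^{1/p^e} : K']$. Finally, Proposition \ref{prop:p-degree}(2) applied to $A$ along the chain $(0) \subseteq \fp \cap A$ yields $[K'^{1/p^e}:K']\, p^{e \height(\fp \cap A)} = [K^{1/p^e}:K]$, and the dimension formula $\height(\fp \cap A) = \dim A - \dim(A/(\fp \cap A)) = \dim A - \dim(R/\fp)$ assembles these identities into the claimed equality.

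For \textbf{(4)}, each irreducible component $R/\fp$ (with $\fp \in \Min R$) is a finite, hence integral, extension of the universally catenary (biequidimensional and excellent) domain $A$. The Noether dimension formula then gives $\height(\fn) = \height(\fn \cap A)$ for every $\fn \in \Spec(R/\fp)$; in particular, maximal ideals of $R/\fp$ contract to maximal ideals of $A$, all of height $\dim A$, so $R/\fp$ is equicodimensional. Being also catenary and irreducible, $R/\fp$ is biequidimensional by Remark \ref{rem:biequidim}(2). Since $R$ itself is equidimensional, catenary, and has equicodimensional irreducible components, a final application of Remark \ref{rem:biequidim}(2) shows $R$ is biequidimensional. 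For \textbf{(5)}, (4) makes $R$ biequidimensional, hence locally equidimensional by Remark \ref{rem:biequidim}(3), so (3) applies and the substitution $\dim(R/\fp) = \dim R = \dim A$ makes the function constantly equal to $[K^{1/p^e}:K]$. For \textbf{(6)}, combining constancy with (3) forces $p^{e(\dim(R/\fp) - \dim A)}$ to be independent of $\fp \in \Min R$; since $p > 1$ and $e > 0$, $\dim(R/\fp)$ must be constant, so $R$ is equidimensional.

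The main obstacle is the careful bookkeeping in (3), where one must simultaneously track residue-field $p$-degrees and local dimensions on both sides of the finite extension $A \hookrightarrow R$. Proposition \ref{prop:p-degree}(2) can be deployed on $R$ and on $A$ separately, but bridging them is precisely what the tower-law identity $[L^{1/p^e}:L] = [K'^{1/p^e}:K']$ for the finite field extension $L/K'$ and the dimension formula for the biequidimensional domain $A$ accomplish; parts (4)--(6) are then consequences of (3) together with the well-behaved dimension theory of finite extensions over universally catenary domains.
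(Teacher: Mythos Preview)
Your argument is correct and follows the paper's approach closely for parts (1)--(3), (5), and (6). The one genuine deviation is in part (4): the paper establishes equicodimensionality of each $R/\fp$ by reapplying part (3) to the finite extension $A \hookrightarrow R/\fp$, computing $\height(\fm/\fp)$ for a maximal ideal $\fm$ via the $p$-degree identities and finding it equal to $\dim A$; you instead invoke the classical dimension formula for finite extensions of a universally catenary domain to get $\height(\fn) = \height(\fn \cap A)$ directly. Your route is shorter and avoids the characteristic-$p$ machinery in that step, at the cost of importing an external result (the altitude formula), whereas the paper keeps the argument self-contained by recycling the Kunz-type identity already proved in (3). Either approach is perfectly valid.
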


\begin{proof}
(1) Suppose $\fp \in \Spec(R)$. Since $A/\fp \cap A \hookrightarrow R/\fp$ is an integral extension, we have $\dim(R/\fp) = \dim(A/\fp \cap A)$, and similarly, $\dim(A) = \dim(R)$. Thus, because $A$ is a domain, $\dim(R/\fp) = \dim(R) \Leftrightarrow \dim(A/\fp \cap A) = \dim(A) \Leftrightarrow \fp \cap A = (0)$.

(2) follows from (1) because R is equidimensional $\Leftrightarrow$ for all $\fp \in \Min(R)$, $\dim(R/\fp) = \dim(R)$. 

(3) Since $A$ is an equicodimensional $F$-finite Noetherian domain, $A$ is biequidimensional by Remark \ref{rem:biequidim}(2). Local equidimensionality of $R$ implies by Proposition \ref{prop:p-degree}(2) that for all $\fq \supseteq \fp$,
\[
[\kappa(\fq)^{1/p^e}:\kappa(\fq)]p^{e\dim(R_\fq)} = [\kappa(\fp)^{1/p^e}:\kappa(\fp)]p^{e\dim(R_\fp)} = [\kappa(\fp)^{1/p^e}:\kappa(\fp)].
\]
Here the second equality holds because $\fp$ is a minimal prime by assumption. Finiteness of $A \subseteq R$ implies that the extension of residue fields $\kappa(\fp \cap A) \hookrightarrow \kappa(\fp)$ is finite. Thus,
\begin{equation}
\label{eq:res-deg-one}
[\kappa(\fp)^{1/p^e}:\kappa(\fp)] = [\kappa(\fp \cap A)^{1/p^e}:\kappa(\fp \cap A)] = \frac{[K^{1/p^e}:K]}{p^{e\dim(A_{\fp \cap A})}},
\end{equation}
where the second equality follows from Proposition \ref{prop:p-degree}(3). As $A$ satisfies the dimension formula (Remark \ref{rem:biequidim}(1)) and $A/\fp \cap A \hookrightarrow R/\fp$ is an integral extension, one can then conclude that
\[
\dim(A_{\fp \cap A}) = \dim(A) - \dim(A/\fp \cap A) = \dim(A) - \dim(R/\fp).
\]
The desired result now follows by (\ref{eq:res-deg-one}).

(4) $R$ is an $F$-finite Noetherian ring, hence catenary and equidimensional (by hypothesis). By Remark \ref{rem:biequidim}(2) it suffices to show that every irreducible component of $\Spec(R)$ is equicodimensional. Therefore, let $\fp \in \Min(R)$. Part (2) of this proposition implies that $A \hookrightarrow R/\fp$ is a finite extension. We have to show all maximal ideals of $R/\fp$ have the same height. Let $\fm$ be a maximal ideal of $R$ containing $\fp$. Since $R/\fp$ is locally equdimensional (it is a domain), by part (3) applied to the finite extension of rings $A \hookrightarrow R/\fp$ and the prime ideals $\fm/\fp \supseteq (0)$ of $R/\fp$, we get
\[
[\kappa(\fm)^{1/p^e}:\kappa(\fm)]p^{e{\height{\fm/\fp}}} = [K^{1/p^e}:K]p^{e(\dim(R/\fp) - \dim(A))} = [K^{1/p^e}:K].
\]
Let $\tilde{\fm} = \fm \cap A$. Using the finite extension $\kappa(\tilde{\fm}) \hookrightarrow \kappa(\fm)$ and the previous chain of equalities, we get 
\[
p^{e{\height{\fm/\fp}}} = \frac{[K^{1/p^e}:K]}{[\kappa(\tilde\fm)^{1/p^e}:\kappa(\tilde\fm)]} = p^{e\dim(A_{\tilde{\fm}})} = p^{e\dim(A)},
\]
where the second equality follows from Proposition \ref{prop:p-degree}(3), and the third equality follows from equicodimensionality of $A$ because $\tilde{\fm}$ is a maximal ideal of $A$. Thus, $\height{\fm/\fp} = \dim(A)$ is independent of the choice of the maximal ideal of $R/\fp$, that is, $R/\fp$ is equicodimensional.

(5) By part (4), $R$ is biequidimensional, and so, $R$ is locally equidimensional (Remark \ref{rem:biequidim}(3)). Let $\fq \in \Spec(R)$ and $\fp \in \Min(R)$ such that $\fp \subseteq \fq$. Then by part (3) we have
\[
[\kappa(\fq)^{1/p^e}:\kappa(\fq)]p^{e\dim(R_\fq)} = [K^{1/p^e}:K]p^{e(\dim(R/\fp) - \dim(A))} = [K^{1/p^e}:K],
\]
where to get the second equality we use part (2).

(6) Let $\fp, \fp' \in \Min(R)$ such that $\dim(R/\fp') = \dim(R) = \dim(A)$. The hypothesis of (6) implies
\[
[\kappa(\fp)^{1/p^e}:\kappa(\fp)] = [\kappa(\fp')^{1/p^e}:\kappa(\fp')],
\]
while part (3) implies that
\[
[\kappa(\fp)^{1/p^e}:\kappa(\fp)] = [K^{1/p^e}:K]p^{e(\dim(R/\fp) - \dim(A))},
\]
and parts (2) and (3) that
\[
[\kappa(\fp')^{1/p^e}:\kappa(\fp')] = [K^{1/p^e}:K]p^{e(\dim(R/\fp') - \dim(A))} = [K^{1/p^e}:K].
\]
Thus,
\[
[K^{1/p^e}:K]p^{e(\dim(R/\fp) - \dim(A))} = [K^{1/p^e}:K],
\]
that is, $\dim(R/\fp) = \dim(A) = \dim(R)$. Since $\fp$ is an arbitrary minimal prime of $R$, we win!
\end{proof}

\subsection{Some constructible properties on the base}\label{subsec:Constructibility properties on the base}
For a morphism of schemes $f:  X \rightarrow S$ and a point $s \in S$, we use $X_s$ to denote the fiber of $f$ over $s$, that is, $X_s = X_{\kappa(s)}$. If $\cF$ is a sheaf of $\mathcal{O}_X$-modules, then we use $\cF_s$ to denote the pullback of $\cF$ along the projection $X_s \rightarrow X$.

Let $A \rightarrow R$ be a finite type map of Noetherian rings. In the proof of Proposition \ref{prop:UBPH-K}, we will need to know if a nonzerodivisor on $R$ stays a nonzerodivisor on `most' of the fibers of $A \rightarrow R$. This will follow from the following global result:

\begin{proposition}
\label{EGA-constructibility}
Let $f: X \rightarrow S$ be a finite type morphism of Noetherian schemes. Let $\mathcal{F}, \mathcal{G}$ be two quasi-coherent $\mathcal{O}_{X}$-modules of finite presentation, and
\[
u: \mathcal{F} \rightarrow \mathcal{G}
\]
be a homomorphism of $\mathcal{O}_X$-modules. Then the set of points $s \in S$ where $u_s$ is injective (resp. surjective, bijective) is constructible in $S$.
\end{proposition}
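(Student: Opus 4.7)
The plan is to combine the characterization of constructibility in a Noetherian topological space---namely, that $E \subseteq S$ is constructible if and only if for every irreducible closed subset $Z \subseteq S$ the intersection $E \cap Z$ is either non-dense in $Z$ or contains a nonempty open of $Z$ (see \cite[\href{https://stacks.math.columbia.edu/tag/005K}{Tag 005K}]{stacks-project})---with Grothendieck's generic freeness lemma. Constructibility is local on $S$, and for a finite affine open cover of $X$ the injectivity or surjectivity of $u_s$ is equivalent to the same condition on each piece of the cover, so that $E$ is a finite intersection of the corresponding loci for the restrictions. Since finite intersections of constructible sets are constructible, we may reduce to the affine case: $S = \Spec A$, $X = \Spec R$ with $R$ of finite type over the Noetherian ring $A$, and $u$ corresponding to a morphism $u: M \to N$ of finitely generated $R$-modules.

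Fix an irreducible closed $Z \subseteq S$ with generic point $\eta$ corresponding to a prime $\fp \in \Spec A$. Endowing $Z$ with its reduced structure, $Z = \Spec(A/\fp)$ is the spectrum of a Noetherian domain, and base-changing the datum to $Z$ produces a finite type morphism $X_Z \to Z$ together with the homomorphism $u_Z = u \otimes_A A/\fp$ of finitely generated $R/\fp R$-modules $M/\fp M$ and $N/\fp N$. Since the residue field at each $s \in Z$ is unchanged by the base change $S \leftarrow Z$, we have $(u_Z)_s = u_s$, so $E \cap Z$ coincides with the analogous locus for $(X_Z \to Z, u_Z)$. It therefore suffices to prove the following: if $B$ is a Noetherian domain, $R$ is a finite type $B$-algebra, and $u: M \to N$ is a homomorphism of finitely generated $R$-modules, then the locus $E \subseteq \Spec B$ where $u_s$ is injective (resp.\ surjective, bijective) either contains a dense open or is contained in a proper closed subset of $\Spec B$.

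To this end, let $K = \ker(u)$, $I = \im(u)$, and $C = \coker(u)$, each a finitely generated $R$-module since $R$ is Noetherian. Applying generic freeness (\cite[\href{https://stacks.math.columbia.edu/tag/051R}{Tag 051R}]{stacks-project}) successively to $K$, $M$, $I$, $N$, $C$ yields a nonzero $a \in B$ such that $K_a, M_a, I_a, N_a, C_a$ are all free over $B_a$. In particular $I_a$ and $C_a$ are $B_a$-flat, so the two short exact sequences
\[
0 \to K \to M \to I \to 0, \qquad 0 \to I \to N \to C \to 0
\]
remain exact upon tensoring with $\kappa(s)$ for every $s \in D(a)$. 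Factoring $u_s$ as the composition $M \otimes \kappa(s) \twoheadrightarrow I \otimes \kappa(s) \hookrightarrow N \otimes \kappa(s)$ then shows that $u_s$ is injective if and only if $K \otimes_B \kappa(s) = 0$, and surjective if and only if $C \otimes_B \kappa(s) = 0$. Since $K$ and $C$ are free over $B_a$, each of these vanishing conditions is constant on $D(a)$: it holds for every $s \in D(a)$ when the respective module is zero and fails for every such $s$ otherwise. Hence $E \cap D(a)$ equals either $D(a)$ or $\emptyset$, so $E$ either contains the dense open $D(a)$ or is contained in the proper closed subset $V(a)$, as required. The main subtlety lies in the initial reduction: using the irreducible-closed-subset characterization of constructibility reduces us cleanly to a Noetherian domain base, after which generic freeness delivers the rest essentially formally.
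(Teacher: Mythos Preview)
Your argument is correct. The reduction to the affine case over a Noetherian domain base via the topological criterion for constructibility, followed by generic freeness applied to $K, M, I, N, C$, goes through cleanly: flatness of $I_a$ and $C_a$ over $B_a$ ensures the two short exact sequences stay exact after $-\otimes_{B_a}\kappa(s)$, so $\ker(u_s)=K_s$ and $\coker(u_s)=C_s$ for $s\in D(a)$, and freeness of $K_a$, $C_a$ makes each of these vanish either everywhere or nowhere on $D(a)$.

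By contrast, the paper does not prove this result at all but simply cites it: the locus is locally constructible in $S$ by \cite[Corollaire (9.4.5)]{EGAIV_III}, and locally constructible equals constructible on a Noetherian scheme by \cite[Chapter 0, Proposition (9.1.12)]{EGAIII_I}. So your proof is genuinely different in that it is self-contained, unpacking what EGA does behind the scenes via generic freeness. The advantage of your route is that it makes transparent exactly which ingredient (generic freeness) drives the result and that the same argument immediately yields the stronger statement used later in the paper (Corollary \ref{cor:nzd-open}), namely that the locus contains a nonempty open when it contains the generic point. The advantage of the paper's approach is brevity and the fact that EGA's statement handles more general bases.
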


Recall that if $X$ is a \emph{Noetherian} topological space, a subset $E \subseteq X$ is \emph{constructible} in $X$ if $E$ is a finite union of locally closed subsets of $X$, where we say a subset is locally closed if it is the intersection of an open and a closed set in $X$. The notion of a constructible set is a little more involved when $X$ is not Noetherian; see \cite[Chapter 0, D\'efinition (9.1.2)]{EGAIII_I}.

\begin{proof}[Proof of Proposition \ref{EGA-constructibility}]
By \cite[Corollaire (9.4.5)]{EGAIV_III}, the set of points of $S$ where $u$ is injective (resp. surjective, bijective) is locally constructible in $S$. However, a locally constructible subset of a Noetherian scheme is constructible by \cite[Chapter 0, Proposition (9.1.12)]{EGAIII_I}. 
\end{proof}

The previous global result has the following local consequence:

\begin{corollary}
\label{cor:nzd-open}
Let $\varphi: A \rightarrow R$ be a finite type map of Noetherian rings. Assume that $A$ is a domain with $K = \Frac(A)$. Let $M$ be a finitely generated $R$-module. If $c \in R$ is a nonzerodivisor on $M$, then the locus of primes $\fp \in \Spec(A)$ such that $c$ is a nonzerodivisor of $M_{\kappa(\fp)} \coloneqq M \otimes_A \kappa(\fp)$ contains an open subset of $\Spec(A)$.
\end{corollary}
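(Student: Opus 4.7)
The plan is to globalize the nonzerodivisor property and invoke Proposition \ref{EGA-constructibility}. Consider the finite type morphism $f: \Spec(R) \to \Spec(A)$ induced by $\varphi$, and let $u: \widetilde{M} \to \widetilde{M}$ be the morphism of quasi-coherent $\cO_{\Spec(R)}$-modules given by multiplication by $c$. Since $M$ is finitely generated over the Noetherian ring $R$, $\widetilde{M}$ is of finite presentation, so the hypotheses of Proposition \ref{EGA-constructibility} are satisfied. For each $\fp \in \Spec(A)$, the base change $u_\fp$ is precisely multiplication by (the image of) $c$ on $M_{\kappa(\fp)}$. Consequently, the locus
\[
U \coloneqq \{\fp \in \Spec(A) \mid c \text{ is a nonzerodivisor on } M_{\kappa(\fp)}\}
\]
is constructible in $\Spec(A)$.

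It therefore suffices to verify that $U$ contains a nonempty open subset. Since $A$ is a domain, $\Spec(A)$ is irreducible with unique generic point $(0)$, and a constructible subset of an irreducible Noetherian topological space contains a nonempty open set if and only if it contains the generic point. (Indeed, any locally closed piece $V \cap Z$ of $U$ witnessing $(0) \in U$ must have $Z$ equal to all of $\Spec(A)$, so that $V \subseteq U$ is a nonempty open set.) To finish, observe that $M_K = M \otimes_A K$ is the localization of $M$ at the multiplicative set $A \setminus \{0\}$, so by exactness of localization, multiplication by $c$ on $M_K$ remains injective. Hence $(0) \in U$, completing the proof.

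There is no real obstacle beyond correctly setting up the constructibility statement: the corollary is essentially a combination of Proposition \ref{EGA-constructibility} with the observation that the domain hypothesis on $A$ forces $\Spec(A)$ to be irreducible, letting us promote a single generic-point membership to a nonempty open subset.
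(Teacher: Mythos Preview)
Your proof is correct and follows essentially the same approach as the paper: apply Proposition \ref{EGA-constructibility} to the multiplication-by-$c$ map on $\widetilde{M}$ to obtain constructibility of the locus, then use irreducibility of $\Spec(A)$ together with injectivity of $c$ on the localization $M_K$ to conclude that the generic point lies in this locus, hence it contains a nonempty open set. The paper's argument is essentially identical, phrasing the injectivity on $M_K$ in terms of flatness of $R_K$ over $R$ rather than exactness of localization.
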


\begin{proof}
Note $c$ is a nonzerodivisor on $M$ if and only if left-multiplication by $c$ is an injective $R$-linear map from $M \rightarrow M$. By Proposition \ref{EGA-constructibility}, the desired locus is a constructible subset of $\Spec(A)$. This locus contains the generic point of $A$, because left multiplication by $c$ is also injective on $M_K$ (since $R_K$ is a flat $R$-module). But a constructible subset of an irreducible space that contains the generic point also contains an open set since a locally closed set that contains the generic point is open.
\end{proof}

It turns out that dimension of irreducible components of fibers is also a constructible property on the base:

\begin{proposition}(\cite[Proposition (9.8.5)]{EGAIV_III} and \cite[D\'efinition (9.3.1)]{EGAIII_I})
\label{prop:equidim-constructible}
Let $f: X \rightarrow S$ be a finite type morphism of Noetherian schemes. Let $\Phi$ be a finite subset of $\mathbb N$. Then the set
\[
\textrm{$\{s \in S: \{\dim(Z): \textrm{$Z$ is an irreducible component of $X_s$}\} \subseteq \Phi \}$},
\]
is a locally constructible, hence constructible, subset of $S$.
\end{proposition}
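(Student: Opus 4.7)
The plan is to proceed by Noetherian induction on $S$, using the standard criterion that a subset $E$ of a Noetherian space $S$ is constructible if and only if for every irreducible closed $T \subseteq S$ there is a nonempty open $U \subseteq T$ with either $U \subseteq E$ or $U \cap E = \varnothing$. It therefore suffices to prove the following key claim: if $S$ is integral with generic point $\eta$, then the function $\psi(s) := \{\dim(Z) : Z \text{ an irreducible component of } X_s\}$ is constant equal to $\psi(\eta)$ on some open neighborhood of $\eta$. Given this, $\{s \in S : \psi(s) \subseteq \Phi\}$ agrees on every irreducible closed $T$ with either a full open subset of $T$ or the empty set, depending on whether $\psi(\eta_T) \subseteq \Phi$, and the proposition follows.

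To prove the claim, let $X_1, \dots, X_n$ be the irreducible components of $X$ (with reduced structure) whose generic points map to $\eta$, and let $Y$ be the union of the remaining irreducible components. Setting $d_i := \dim(X_{i,\eta})$, the irreducible components of $X_\eta$ are precisely the $X_{i,\eta}$, none of which are comparable, so $\psi(\eta) = \{d_1,\dots,d_n\}$. By Chevalley's theorem $f(Y)$ is constructible in $S$ and misses $\eta$, so its closure is a proper closed subset of $S$; passing to its complement we may assume $Y_s = \varnothing$ for every $s \in S$, whence $X_s = \bigcup_i X_{i,s}$. Applying generic flatness to each dominant morphism $X_i \to S$ and shrinking $S$ further, we may assume every $f|_{X_i}$ is flat and surjective. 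For flat finite type morphisms the fiber-dimension function $x \mapsto \dim_x(X_{i,f(x)})$ is locally constant on $X_i$; since it equals $d_i$ at the generic point of the irreducible $X_i$, it equals $d_i$ everywhere. Consequently every irreducible component of every nonempty fiber $X_{i,s}$ has dimension $d_i$, and a maximality argument (any irreducible component of $X_s$ lies in, hence is, an irreducible component of some $X_{i,s}$) yields $\psi(s) \subseteq \{d_1,\dots,d_n\}$ for all $s$.

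The main obstacle is the reverse inclusion $\psi(s) \supseteq \{d_1,\dots,d_n\}$: we must prevent a $d_i$-dimensional irreducible component of some $X_{i,s}$ from being absorbed into $X_{j,s}$ for $j \neq i$. Since $X_{i,\eta}$ and $X_{j,\eta}$ are distinct irreducible components of $X_\eta$, the intersection $(X_i \cap X_j)_\eta$ is a proper closed subset of the irreducible $X_{i,\eta}$, so $\dim((X_i \cap X_j)_\eta) < d_i$. By upper semicontinuity of fiber dimension for the finite type morphism $X_i \cap X_j \to S$, the locus $\{s \in S : \dim((X_i \cap X_j)_s) \geq d_i\}$ is closed in $S$ and avoids $\eta$. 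Removing these finitely many closed subsets (one for each pair $i \neq j$) produces the desired open neighborhood $U$ of $\eta$: on $U$, no irreducible component $W$ of any $X_{i,s}$ can satisfy $W \subseteq X_{j,s}$ with $j \neq i$, for then $W \subseteq (X_i \cap X_j)_s$ would force $\dim((X_i \cap X_j)_s) \geq \dim(W) = d_i$, a contradiction. Hence each irreducible component of every $X_{i,s}$ remains an irreducible component of $X_s$, giving $\psi(s) = \psi(\eta)$ on $U$ and completing the proof.
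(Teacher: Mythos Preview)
The paper does not give its own proof of this proposition; it simply cites \cite[Proposition (9.8.5)]{EGAIV_III}. Your argument is essentially the standard EGA-style proof (Noetherian induction via generic constancy of the fiber-dimension data), so in that sense you are supplying exactly what the paper defers to the reference.

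There is one inaccuracy to flag. In the last paragraph you assert that, by upper semicontinuity of fiber dimension, the set
\[
\{\,s \in S : \dim((X_i \cap X_j)_s) \geq d_i\,\}
\]
is \emph{closed} in $S$. Chevalley's semicontinuity theorem gives upper semicontinuity of $x \mapsto \dim_x\big((X_i \cap X_j)_{f(x)}\big)$ on the source $X_i \cap X_j$, not of $s \mapsto \dim((X_i \cap X_j)_s)$ on the base $S$; the displayed set is the image under $f$ of a closed subset of $X_i \cap X_j$, hence is only \emph{constructible} in general (take for instance $(\mathbb{A}^1 \setminus \{0\}) \times \mathbb{A}^1 \to \mathbb{A}^1$ by first projection, where the locus of one-dimensional fibers is open, not closed). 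Fortunately this does not damage your argument: you already handled exactly this situation for $f(Y)$ earlier in the proof. The set is constructible and misses $\eta$, so its closure is a proper closed subset of $S$, and you may shrink $S$ to avoid it. Alternatively, you can apply generic flatness once more to the finitely many irreducible components of each $X_i \cap X_j$ and argue as you did for the $X_i$.

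With that correction, your proof is complete and correct.
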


Proposition \ref{prop:equidim-constructible} allows us to spread out equidimensionality. We present an affine version below since this is all we will need in our applications.

\begin{corollary}
\label{cor:spreading-out-equidim}
Let $\varphi: A \rightarrow R$ be a finite type map of Noetherian rings. Assume that $A$ is a domain with $K = \Frac(A)$. If the generic fiber $R_K$ is equidimensional, then the locus of primes $\fp \in \Spec(A)$ such that $R_{\kappa(\fp)}$ is equidimensional contains an open subset of $\Spec(A)$.
\end{corollary}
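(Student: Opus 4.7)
The plan is to deduce the corollary as a direct application of Proposition \ref{prop:equidim-constructible} together with the standard fact that constructible subsets of an irreducible Noetherian space containing the generic point contain a nonempty open subset.

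First, I would set $d := \dim(R_K)$, so that by hypothesis every irreducible component of the generic fiber has dimension exactly $d$. Applying Proposition \ref{prop:equidim-constructible} to the finite type morphism $\Spec(R) \to \Spec(A)$ with $\Phi := \{d\}$, we obtain that the set
\[
E := \{\fp \in \Spec(A) : \text{every irreducible component of } R_{\kappa(\fp)} \text{ has dimension } d\}
\]
is a constructible subset of $\Spec(A)$.

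Next, observe that $E$ contains the generic point $(0)$ of $\Spec(A)$, since by construction the generic fiber is $R_K$, which is equidimensional of dimension $d$. Because $A$ is a domain, $\Spec(A)$ is irreducible, and I would then invoke the standard topological fact: a constructible subset of an irreducible Noetherian space containing the generic point necessarily contains a nonempty open set. The reason is that such a constructible set is a finite union of locally closed subsets $U_i \cap V_i$ (with $U_i$ open and $V_i$ closed); the generic point must lie in some $U_i \cap V_i$, and then $V_i$, being closed and containing the generic point, must equal all of $\Spec(A)$, so $U_i \cap V_i = U_i$ is open.

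Finally, since every fiber $R_{\kappa(\fp)}$ with $\fp \in E$ is clearly equidimensional (all components have the same dimension $d$), the set $E$ is contained in the locus of primes where the fiber is equidimensional. Therefore the equidimensional locus contains the nonempty open subset of $\Spec(A)$ produced in the previous step, which is exactly the claim. There is no real obstacle here — the content is entirely packaged into Proposition \ref{prop:equidim-constructible} — the only subtlety is noting that the equidimensional fibers of arbitrary dimension form a possibly larger set than $E$, which is harmless since we only need to exhibit \emph{some} open subset inside the equidimensional locus.
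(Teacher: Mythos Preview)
Your proof is correct and follows essentially the same approach as the paper: apply Proposition \ref{prop:equidim-constructible} with $\Phi = \{\dim(R_K)\}$ to obtain a constructible set containing the generic point, then invoke the fact that such a set contains a nonempty open subset. The paper's proof is terser, referring back to Corollary \ref{cor:nzd-open} for the constructible-set argument, whereas you helpfully spell it out and also make explicit that $E$ sits inside the possibly larger equidimensional locus.
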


\begin{proof}
Let $\Phi = \{\dim(R_K)\}$. By Proposition \ref{prop:equidim-constructible}, the set
\[
\Sigma \coloneqq \textrm{$\{\fp \in \Spec(A): R_{\kappa(\fp)}$ is equidimensional of dimension $= \dim(R_K)$ \}}
\]
is a constructible subset of $\Spec(A)$ containing the generic point. Hence by the same reasoning as in Corollary \ref{cor:nzd-open}, $\Sigma$ contains an open set.
\end{proof}

\section{A uniform bound on Hilbert--Kunz multiplicity of fibers}\label{sec:A Uniform Bound on Multiplicity of fibers}

We first define what we mean by \emph{uniformly bounding} the Hilbert--Kunz multiplicity of fibers.

\begin{definition}\label{UBPH-K-def}
Let $\varphi:A\hookrightarrow R$ be a map of Noetherian $F$-finite rings and $\cF$ a finitely generated $R$-module. We say that the pair $(\cF,\varphi)$ satisfies \emph{uniform boundedness property of Hilbert--Kunz (UBPH-K) with data $(e_0,d_0)$} if the following holds: there exists constants $e_0 \geq 0$, $d_0 \geq 0$ and $C >0$ along with some $0\neq g\in A$, such that for every $\fp\in\Spec A_g$, $e>e_0$, $d > d_0$ and $x\in\Spec(R_{\kappa(\fp)^{1/p^d}})$ 
\begin{align}
\left|\ehk\left(\cF_{\kappa(\fp)^{1/p^d},x}\right)-\frac{\ell_{R_{\kappa(\fp)^{1/p^d},x}}\left(\frac{\cF_{\kappa(\fp)^{1/p^d},x}}{\fP^{[p^e]}\left(\cF_{\kappa(\fp)^{1/p^d},x}\right)}\right)}{p^{e\dim\left(R_{\kappa(\fp)^{1/p^d},x}\right)}}\right|\leq \frac{C}{p^e}\label{ubphk-ineq}
\end{align}
where $\mathfrak P$ denotes the maximal ideal of $R_{\kappa(\fp)^{1/p^d},x}$, and $\ehk$ is computed with respect to the local ring $R_{\kappa(\fp)^{1/p^d},x}$.
\end{definition}

\begin{remark}
{\*}
\begin{enumerate}
\item The point of introducing the UBPH-K definition is that it gives us a way to \emph{uniformly} compare the local Hilbert--Kunz multiplicities of purely inseparable base field extensions of the fiber rings $R_{\kappa(\fp)}$, for $\fp$ in some open subset of $\Spec(A)$. Such a uniform comparison is crucial for proving \ref{axiom:A2-modified-body} (see Theorem \ref{theorem:a2}). 

\item Using Lemma \ref{lem:gamma-local-HK}, the inequality in (\ref{ubphk-ineq}) can be re-expressed in the following equivalent manner:
\begin{align}
\left|\ehk\left(\cF_{\kappa(\fp)^{1/p^d},x}\right)-\frac{\mu_{R_{\kappa(\fp)^{1/p^d},x}}\left(F^e_*\left(\cF_{\kappa(\fp)^{1/p^d},x}\right)\right)}{p^{e\gamma(R_{\kappa(\fp)^{1/p^d},x})}}\right|\leq \frac{C}{p^e}\label{ubphk-ineq-2},
\end{align}
where $\gamma(R_{\kappa(\fp)^{1/p^d},x})=\max\{\log_p [\kappa(\fQ)^{1/p}:\kappa(\fQ)] \mid \fQ\in\min(R_{\kappa(\fp)^{1/p^d},x})\}.$
\end{enumerate}
\end{remark}


\subsection{Uniform boundedness of Hilbert--Kunz and geometrically reduced fibers}
In this subsection, we will focus on the following setting:

\begin{setting}\label{gred}
Let $A$ be an $F$-finite Noetherian ring of prime characteristic $p > 0$, such that the regular (equivalently, reduced) locus of $A$ is non-empty.
Let $\varphi: A \rightarrow R$ be a ring homomorphism of finite type such that the generic fibers of $\varphi$ are equidimensional and geometrically reduced.
\end{setting}

\begin{remarks}\label{rem:regular-locus-nonempty}
The hypotheses of Setting \ref{gred} have the following consequences we will repeatedly use in our proofs of uniform estimates.
\begin{enumerate}    
    \item If $A$ is as in Setting \ref{gred}, then for any $f \in A$, such that $\Reg(A) \cap D(f) \neq \emptyset$, $A_f$ is also in Setting \ref{gred}. Moreover, the induced map $\varphi_f: A_f \rightarrow R_f$ also satisfies the hypotheses of Setting \ref{gred}. Thus, we may freely localize $\varphi$ at elements of $A$ to make $A$ and $\varphi$ nicer.

    \item In our setting, $\Reg(A)$ is a non-empty open subset of $\Spec(A)$ since $A$ is excellent.  Hence there exists $f\in A$ such that $A_f$ is regular. As a regular Noetherian ring is a finite product of regular domains, one can even choose $f$ such that $A_f$ is a regular domain. 
    \end{enumerate}
\end{remarks}

\begin{notation}
Under Setting \ref{gred}, if $B$ is an $A$-algebra and $M$ is an $R$-module, then $M_B$ will denote the $R_B \coloneqq R \otimes_A B$-module $M \otimes_A B$.
\end{notation}

The goal of this subsection is to show that if $\varphi$ is as in Setting \ref{gred}, then for any finitely generated $R$-module $\cF$, the pair $(\cF,\varphi)$ satisfies UBPH-K with data $(0,0)$ (Theorem \ref{prop:UBPH-K}). For this we will need the following lemmas.

\begin{lemma}\label{ptconv}\cite[3.5]{PT18}
Let $p$ be a prime number, $d\in\N$, and $\{\lambda_e\}_{e\in\N}$ be sequence of real numbers so that $\left\{\frac{1}{p^{ed}}\lambda_e\right\}_{e\in\N}$ is bounded. If there exists a positive constant $C\in\R$ so that $$\left|\frac{1}{p^{(e+1)d}}\lambda_{e+1}-\frac{1}{p^{ed}}\lambda_e\right|\leq \frac{C}{p^e}$$ for all $e\in\N$, then the limit 
\[
\lambda \coloneqq \lim\limits_{e\longrightarrow \infty}\frac{1}{p^{ed}}\lambda_e
\] 
exists and 
\[
\left|\frac{1}{p^{ed}}\lambda_e-\lambda\right|\leq\frac{2C}{p^e}
\] 
for all $e\in\N$.
\end{lemma}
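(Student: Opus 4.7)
The plan is to reduce this to a standard telescoping estimate for a Cauchy sequence. Set $a_e \coloneqq \lambda_e/p^{ed}$; then the hypothesis reads $|a_{e+1}-a_e|\le C/p^e$, and the goal becomes: the sequence $\{a_e\}$ converges to some $\lambda$, with $|a_e-\lambda|\le 2C/p^e$. The boundedness assumption in the hypothesis is a harmless extra; it will actually follow from the Cauchy estimate once we produce it, so I would not use it.

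First, I would show $\{a_e\}$ is Cauchy by telescoping. For any integers $m>e\ge 0$, the triangle inequality gives
\[
|a_m-a_e| \;\le\; \sum_{i=e}^{m-1}|a_{i+1}-a_i| \;\le\; \sum_{i=e}^{m-1}\frac{C}{p^i} \;<\; \sum_{i=e}^{\infty}\frac{C}{p^i} \;=\; \frac{C}{p^e}\cdot\frac{p}{p-1}.
\]
Since $p$ is a prime, $p\ge 2$, and hence $p/(p-1)\le 2$, so $|a_m-a_e|\le 2C/p^e$ for all $m>e$. In particular $\{a_e\}$ is Cauchy and therefore converges to some real number $\lambda$.

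Finally, fixing $e$ and letting $m\to\infty$ in the inequality $|a_m-a_e|\le 2C/p^e$ produces $|\lambda-a_e|\le 2C/p^e$, which is exactly the claimed bound $\bigl|\tfrac{1}{p^{ed}}\lambda_e-\lambda\bigr|\le 2C/p^e$.

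There is no real obstacle here; the only minor point worth being careful about is the factor of $2$, which is precisely where one uses $p\ge 2$ to bound the geometric-series tail $\sum_{i\ge e}p^{-i}=\tfrac{p}{p-1}p^{-e}$ by $2p^{-e}$. Note also that the stated boundedness hypothesis on $\{\lambda_e/p^{ed}\}$ is redundant for the conclusion, since the telescoping estimate above already forces the sequence to be Cauchy (and hence bounded) from the single-step bound alone.
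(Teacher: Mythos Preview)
Your proof is correct and is the standard telescoping/geometric-series argument. The paper does not actually give its own proof of this lemma; it is simply cited from \cite[3.5]{PT18}, and your argument is exactly the expected one (including the observation that the boundedness hypothesis is redundant).
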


\noindent The following well-known lemma is implicit in the proof of \cite[Lemma 3.3]{Tuc12}; we include a proof for the reader's convenience.

\begin{lemma}
\label{exact-sequences}
Let $R$ be a $d$-dimensional reduced Noetherian  ring. Suppose that $M$ and $N$ are finitely generated $R$-modules such that $M_\fp\cong N_\fp$ for every $\fp\in\Min(R)$, where $\Min(R)$ denotes the set of minimal primes of $R$. Then there exists $c\in R-\bigcup\limits_{\Min(R)}\fp$ and exact sequences of $R$-modules
\begin{align*}
    M \rightarrow N \rightarrow M_1 \rightarrow 0\\
    N \rightarrow M\rightarrow M_2 \rightarrow 0
\end{align*}
such that $(M_1)_c= (M_2)_c=0$.
\end{lemma}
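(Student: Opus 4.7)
The plan is to descend a $Q(R)$-linear isomorphism between $M\otimes Q(R)$ and $N\otimes Q(R)$ to a pair of genuine $R$-linear maps, and then use finite generation to annihilate the two cokernels by a single element of $W := R\setminus\bigcup_{\fp\in\Min(R)}\fp$.

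First I would observe that since $R$ is reduced and Noetherian, $W$ is precisely the set of non-zero-divisors of $R$, and $Q(R) = W^{-1}R \cong \prod_{\fp\in\Min(R)} R_{\fp}$ is a finite product of fields. The hypothesis that $M_\fp \cong N_\fp$ for every $\fp \in \Min(R)$ therefore assembles, component by component, into a single $Q(R)$-linear isomorphism
\[
\Phi \colon M \otimes_R Q(R) \xrightarrow{\sim} N \otimes_R Q(R).
\]

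Next, because $R$ is Noetherian and $M, N$ are finitely generated (hence finitely presented), the natural map $\Hom_R(M,N) \otimes_R Q(R) \to \Hom_{Q(R)}(M \otimes Q(R), N \otimes Q(R))$ is an isomorphism. Consequently $\Phi$ can be represented as a fraction $f/s$ with $f \in \Hom_R(M,N)$ and $s \in W$, so that $W^{-1}f = s\Phi$ is an isomorphism over $Q(R)$ (since $s$ is a unit there). Applying the same argument with the roles of $M$ and $N$ swapped yields an $R$-linear map $g \colon N \to M$ whose $W$-localization is also an isomorphism.

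Finally, I would set $M_1 := \coker(f)$ and $M_2 := \coker(g)$. These are finitely generated $R$-modules by construction, and $(M_1)_W = (M_2)_W = 0$ since $W^{-1}f$ and $W^{-1}g$ are surjective. Finite generation then forces each $M_i$ to be annihilated by a single element $c_i \in W$, and $c := c_1 c_2 \in W$ lies outside every minimal prime and satisfies $(M_1)_c = (M_2)_c = 0$, producing the two desired exact sequences. I do not anticipate a substantial obstacle here; the only point requiring a little care is the identification $\Hom_R(M,N) \otimes_R W^{-1}R \cong \Hom_{W^{-1}R}(W^{-1}M, W^{-1}N)$, which is standard for finitely presented modules over a Noetherian ring.
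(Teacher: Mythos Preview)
Your proof is correct and follows essentially the same approach as the paper: both identify $W^{-1}R$ with the finite product of fields $\prod_{\fp\in\Min(R)} R_\fp$, use the isomorphism $W^{-1}\Hom_R(M,N)\cong\Hom_{W^{-1}R}(W^{-1}M,W^{-1}N)$ to descend the given isomorphism to $R$-linear maps in each direction, and then clear denominators in the finitely generated cokernels to obtain the single element $c$. Your write-up is slightly more detailed (e.g.\ making explicit that $c=c_1c_2$), but there is no substantive difference.
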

\begin{proof}
Let $W:=R-\bigcup\limits_{\Min(R)} \fp$ so that $W^{-1}R=\prod\limits_{\Min(R)} R_\fp$, a finite product of fields. By assumption, $W^{-1}M\cong W^{-1}N$. As $W^{-1}\Hom_R(M,N)=\Hom_{W^{-1}R}(W^{-1}M,W^{-1}N)$, there exists $\varphi:M \rightarrow N$ and $\psi:N\rightarrow M$ such that $W^{-1}\varphi$ and $W^{-1}\psi$ are isomorphisms. Letting $M_1:=\coker\varphi$ and $M_2:=\coker\psi$, we have $W^{-1}M_1, W^{-1}M_2=0$. Since $M_1, M_2$ are finitely generated as $R$-modules, the claim follows.
\end{proof}

\begin{lemma}
\label{min-prime-gen-fiber}
Let $A \rightarrow R$ be a flat map of Noetherian $F$-finite rings of prime characteristic $p > 0$. Suppose $A$ is a domain with $\Frac(A) = K$. Then for any minimal prime ideal $\fq$ of $R$, $R_{A^{1/p}, \fq}$ is a free $R_\fq$-module of rank $[K^{1/p}:K].$
\end{lemma}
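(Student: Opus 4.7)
The plan is to reduce the freeness statement to the fact that $K^{1/p}$ is a free $K$-module of rank $[K^{1/p}:K]$ (which is finite by the $F$-finiteness of $A$, since $K$ is a localization of $A$), and then use base change.

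First I would observe that since $\varphi : A \to R$ is flat and $A$ is a domain, every minimal prime $\fq$ of $R$ lies over the generic point of $A$, that is, $\fq \cap A = (0)$. Indeed, going down holds for flat maps, so if $\fq \cap A$ were a nonzero prime of $A$, one could produce a prime of $R$ strictly contained in $\fq$, contradicting minimality. Consequently, for every nonzero $a \in A$ one has $\varphi(a) \in R \setminus \fq$, so $\varphi(a)$ becomes a unit in $R_\fq$. Thus the structure map $A \to R_\fq$ factors through $K$ and $R_\fq$ is a $K$-algebra.

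Next, I would compute
\[
R_{A^{1/p},\fq} \;=\; (R \otimes_A A^{1/p}) \otimes_R R_\fq \;=\; R_\fq \otimes_A A^{1/p} \;=\; R_\fq \otimes_K \bigl(K \otimes_A A^{1/p}\bigr),
\]
where the last equality uses that $R_\fq$ is a $K$-algebra. The key small identification needed here is $K \otimes_A A^{1/p} = K^{1/p}$. To see this, note that $K \otimes_A A^{1/p}$ is the localization of $A^{1/p}$ at the image of $A \setminus \{0\}$. For any nonzero $x \in A^{1/p}$, $x^p \in A \setminus \{0\}$ becomes a unit in this localization, and then $x \cdot (x^{p-1}(x^p)^{-1}) = 1$ shows $x$ itself is a unit. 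So every nonzero element of $A^{1/p}$ is invertible in $K \otimes_A A^{1/p}$, forcing $K \otimes_A A^{1/p} = \Frac(A^{1/p}) = K^{1/p}$.

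Finally, since $A$ is $F$-finite and $K$ is a localization of $A$, $K^{1/p}$ is a finite, hence free, $K$-module of rank $[K^{1/p}:K]$. Extending scalars along $K \to R_\fq$ therefore yields that $R_{A^{1/p},\fq} \cong R_\fq \otimes_K K^{1/p}$ is a free $R_\fq$-module of rank $[K^{1/p}:K]$, as desired. There is no real obstacle here; the only subtle point is verifying that localizing $A^{1/p}$ at $A \setminus \{0\}$ already produces a field, and this follows from the pointwise argument above.
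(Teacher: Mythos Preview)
Your proof is correct and follows essentially the same approach as the paper: both first use going-down for the flat map $A \to R$ to see that $\fq \cap A = (0)$, and then reduce the freeness statement to the fact that $K^{1/p}$ is free of rank $[K^{1/p}:K]$ over $K$ via base change. The paper phrases the second step as localizing the free $R_K$-module $R_{K^{1/p}}$ at $\fq$, while you write out the tensor identifications $R_{A^{1/p},\fq} \cong R_\fq \otimes_K K^{1/p}$ more explicitly, but these are the same computation.
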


\begin{proof}
Let $\fp \coloneqq \fq \cap A$. Since $A_{\fp} \hookrightarrow R_{\fq}$ is faithfully flat, by Going-Down and the minimality of $\fq$, 
\[
\fp = (0).
\]
Since $R_{K^{1/p}}$ is a free $R_K$-module of rank $[K^{1/p}:K]$, upon localizing at $\fq$, it follows that
\[
R_{A^{1/p},\fq} = (R_{A^{1/p},\fp})_{\fq} = (R_{K^{1/p}})_\fq
\]
is also a free $R_\fq = (R_{\fp})_{\fq} = (R_K)_\fq$-module of rank  $[K^{1/p}:K]$.
\end{proof}

\begin{theorem}\label{pty}\cite{PTY}
Let $A$ be a Noetherian ring of prime characteristic $p > 0$ and $R$ a finitely generated $A$-algebra. Then for all finitely generated $R$-modules $M$, there exists a positive constant $C$ with the following property: for all primes $\fp\in\Spec A$, all regular $\kappa(\fp)$-algebras $\Gamma$, all $\fP\in\Spec (R_\Gamma)$, and all $e\geq 1$, we have 
\[
\ell_{R_{\Gamma,\fP}}((M_\Gamma)_\fP/\fP^{[p^e]}(M_\Gamma)_{\fP})\leq Cp^{e\dim \left((M_\Gamma)_{\fP}\right)}.
\] 
\end{theorem}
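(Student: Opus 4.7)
The proof plan has three parts: reductions to cyclic modules over a polynomial ring, an estimate for each cyclic fiber using classical uniform Hilbert--Kunz bounds, and a Noetherian-induction argument on $\Spec A$ to produce a single universal constant.

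\textbf{Reductions.} First, I would fix a surjection $\pi\colon A[x_1,\ldots,x_n] \twoheadrightarrow R$. Because the length $\ell_{R_{\Gamma,\fP}}((M_\Gamma)_\fP/\fP^{[p^e]}(M_\Gamma)_\fP)$, the Frobenius power $\fP^{[p^e]}M_\Gamma$, and the dimension $\dim(M_\Gamma)_\fP$ are all intrinsic to $M$ as a module (they depend only on $M_\Gamma$ and the preimage of $\fP$ in the chosen ambient ring), one may assume $R = A[x_1,\ldots,x_n]$; crucially, then $R_\Gamma = \Gamma[x_1,\ldots,x_n]$ is regular whenever $\Gamma$ is. Next, take a prime filtration $0 = M_0 \subsetneq M_1 \subsetneq \cdots \subsetneq M_t = M$ with $M_i/M_{i-1} \cong R/\fq_i$. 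Since $\ann_R M \subseteq \fq_i$, one has $\supp(R_\Gamma/\fq_i R_\Gamma) \subseteq \supp M_\Gamma$, and hence $\dim(R_\Gamma/\fq_i R_\Gamma)_\fP \leq \dim(M_\Gamma)_\fP$ whenever $\fq_i R_\Gamma \subseteq \fP$. Applying $-\otimes_A \Gamma$ to the filtration gives right-exact sequences $M_{i-1,\Gamma} \to M_{i,\Gamma} \to (R/\fq_i)_\Gamma \to 0$, and the subadditivity $\ell(X/\fP^{[p^e]}X) \leq \ell(Y/\fP^{[p^e]}Y) + \ell(Z/\fP^{[p^e]}Z)$ for right-exact $Y \to X \to Z \to 0$ reduces the task inductively to producing, for each cyclic module $T = R/\fq_i$, a constant $C_{\fq_i}$; then $C = \sum_i C_{\fq_i}$ works for $M$.

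\textbf{Cyclic estimate and uniformity.} Fix $T = R/\fq$ and target the bound
\[
\ell_{R_{\Gamma,\fP}}\bigl((T_\Gamma)_\fP/\fP^{[p^e]}(T_\Gamma)_\fP\bigr) \leq C_\fq \cdot p^{e\dim(T_\Gamma)_\fP}
\]
uniformly in $\fp, \Gamma, \fP, e$. I would construct $C_\fq$ by Noetherian induction on $\Spec A$. Over a suitable nonempty open $D(f) \subset \Spec A$, generic freeness makes $T_f$ free over $A_f$, and Corollary~\ref{cor:spreading-out-equidim} arranges that every fiber $T_{\kappa(\fp)}$ is equidimensional. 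On this open set, the classical uniform Hilbert--Kunz bound for finite-type algebras over a field (in the style of \cite{PT18}) furnishes a constant for each individual fiber $T_{\kappa(\fp)}$, and constructibility arguments of the type in Proposition~\ref{EGA-constructibility}, together with the generic flatness, collapse these into a single constant valid for all $\fp \in D(f)$. The passage from $\kappa(\fp)$ to a regular $\kappa(\fp)$-algebra $\Gamma$ uses that the flat local map $R_{\kappa(\fp),\fq'} \to R_{\Gamma,\fP}$ (with $\fq' := \fP \cap R_{\kappa(\fp)}$) has regular fibers, so Hilbert--Kunz lengths transfer cleanly between these local rings without inflating the bound (cf.~Theorem~\ref{thm:HK-faithfully-flat-map} combined with Kunz's characterization of regularity via flatness of Frobenius). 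Noetherian induction on the closed complement $V(f) \subsetneq \Spec A$ then upgrades the bound to all of $\Spec A$, yielding the universal $C_\fq$.

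\textbf{Main obstacle.} The hardest part is the uniformity step. The classical Polstra--Tucker-type bound is only a pointwise statement fiber by fiber, and packaging the pointwise constants into a single constant valid over all of $\Spec A$ requires an interplay between generic freeness, constructibility of the relevant Hilbert--Kunz data on the base, and the observation that the bound depends only on discrete invariants (numbers of generators, relations, and dimensions) that spread out along $\Spec A$. The further extension to arbitrary regular $\kappa(\fp)$-algebras $\Gamma$ is essential and relies on regularity of $\Gamma$: without it, $R_\Gamma$ can fail to be regular in the polynomial-ring reduction, and the Hilbert--Kunz behavior of $T_\Gamma$ may not be controllable in terms of that of $T_{\kappa(\fp)}$.
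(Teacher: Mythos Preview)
The paper does not contain a proof of this theorem. It is quoted from the external reference \cite{PTY} (Perez--Tucker--Yao, \emph{in preparation}) and used as a black box in the proof of Theorem~\ref{prop:UBPH-K}. There is therefore nothing in the paper to compare your proposal against.

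As for the proposal itself: the reductions in the first block are sound. Replacing $R$ by a polynomial ring $A[x_1,\dots,x_n]$ is harmless for the stated invariants, and the prime-filtration reduction to cyclic modules $R/\fq_i$ goes through because the primes $\fq_i$ all contain $\ann_R M$, so $\dim(R_\Gamma/\fq_i R_\Gamma)_\fP \le \dim(M_\Gamma)_\fP$, and the right-exact subadditivity of colengths is standard.

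The gap is in your passage from $\kappa(\fp)$ to an arbitrary regular $\kappa(\fp)$-algebra $\Gamma$. You assert that the flat local map
\[
R_{\kappa(\fp),\fq'} \longrightarrow R_{\Gamma,\fP}, \qquad \fq' = \fP \cap R_{\kappa(\fp)},
\]
has regular fibers, and then invoke Theorem~\ref{KunzA1}. After your polynomial reduction this map is $\kappa(\fp)[x]_{\fq'} \to \Gamma[x]_{\fP}$, whose closed fiber is a localization of $(\kappa(\fp)[x]/\fq') \otimes_{\kappa(\fp)} \Gamma$. This fiber need not be regular: ``$\Gamma$ is a regular ring which is a $\kappa(\fp)$-algebra'' is strictly weaker than ``$\kappa(\fp) \to \Gamma$ is a regular morphism,'' and in characteristic $p$ the difference matters. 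For instance, with $\kappa(\fp) = \mathbb{F}_p(t)$ and $\Gamma = \kappa(\fp)^{1/p}$ (a field, hence regular), tensoring a non-smooth $\kappa(\fp)$-point of $\kappa(\fp)[x]/\fq'$ with $\Gamma$ produces nilpotents. Even the simpler case of a flat local map between two regular local rings does not force the closed fiber to be regular (e.g.\ $k[[s]] \hookrightarrow k[[t]]$, $s \mapsto t^2$, in characteristic $2$). So Theorem~\ref{KunzA1} is not available here, and Theorem~\ref{thm:HK-faithfully-flat-map} gives an inequality in the wrong direction for your purposes. The $\Gamma$-step therefore needs a different mechanism --- one that exploits directly that $R_\Gamma = \Gamma[x_1,\dots,x_n]$ is itself regular (so Frobenius on $R_\Gamma$ is flat), rather than trying to transfer a bound computed over $\kappa(\fp)$. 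Your own ``Main obstacle'' paragraph correctly flags that this is where the real work lies; the sketch as written does not yet supply it.
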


\begin{theorem}
\label{prop:UBPH-K}
Let $\varphi:A\rightarrow R$ be as in Setting \ref{gred}. For any finitely generated $R$-module $\cF$, the pair $(\cF,\varphi)$ satisfies UBPH-K with data $(0,0)$. In particular, so does $(R,\varphi)$.
\end{theorem}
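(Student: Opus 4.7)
The plan is to reduce the statement to the hypotheses of the abstract convergence Lemma \ref{ptconv} and apply it. For any $\fp, d, x$ as in the UBPH-K definition, set $R_x \coloneqq R_{\kappa(\fp)^{1/p^d}, x}$, $\cF_x \coloneqq \cF_{\kappa(\fp)^{1/p^d}, x}$, and $\lambda_e \coloneqq \ell_{R_x}(\cF_x/\fP^{[p^e]}\cF_x)$. The boundedness of $\lambda_e/p^{e\dim R_x}$ required by Lemma \ref{ptconv} follows uniformly from Theorem \ref{pty} applied with $\Gamma = \kappa(\fp)^{1/p^d}$. So the whole problem comes down to establishing a uniform consecutive-difference estimate
\[
\left|\frac{\lambda_{e+1}}{p^{(e+1)\dim R_x}} - \frac{\lambda_e}{p^{e\dim R_x}}\right| \leq \frac{C}{p^e},
\]
with a single constant $C$ depending only on $\cF$ and $\varphi$.

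First I would make structural reductions on $A$. By Remark \ref{rem:regular-locus-nonempty}, invert an element to assume $A$ is a regular domain. By Corollary \ref{cor:rel-Frob-injective} (the equivalence of (2) and (3)), invert another element so that $\varphi$ becomes flat with geometrically reduced fibers. By Corollary \ref{cor:spreading-out-equidim}, invert one more element so that \emph{every} fiber of $\varphi$ is equidimensional. Under these assumptions, Lemma \ref{EGA-rocks}(1) and Proposition \ref{prop:equidim-base-field-ext} imply that for every $\fp \in \Spec A$ and every $d \geq 0$, the ring $R_{\kappa(\fp)^{1/p^d}}$ is reduced and equidimensional; hence so is each localization $R_x$. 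Proposition \ref{prop:constancy-Kunz-function} then gives precise control over $[\kappa(\fq)^{1/p}:\kappa(\fq)]$ for $\fq$ ranging over $\Min(R_x)$, which is what ties the estimate in the definition of UBPH-K (equivalently its $\mu$-version) together with the combinatorics of generic ranks.

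The heart of the argument is a Tucker/Polstra--Tucker-style estimate relating $F^{e+1}_* \cF_x = F_*(F^e_* \cF_x)$ to a suitable direct sum of copies of $F^e_* \cF_x$. Classically (\cite{Tuc12}, \cite{PT18}) one applies Lemma \ref{exact-sequences} over the reduced local ring $R_x$ to $F_* \cF_x$ and a free $R_x$-module whose generic rank at each minimal prime $\fq$ matches that of $F_* \cF_x$ (predicted by Lemma \ref{min-prime-gen-fiber} and Proposition \ref{prop:constancy-Kunz-function}(5)). This produces a nonzerodivisor $c_x \in R_x$ and two short exact sequences whose cokernels are killed by $c_x$; combined with Theorem \ref{pty} they yield the consecutive-difference bound over the single fiber. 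The essential new step is to produce a single element $c \in R$ that plays the role of $c_x$ uniformly across all $\fp, d, x$. I would construct $c$ by running Lemma \ref{exact-sequences} on the generic fiber $R_K$ applied to $F_* \cF_K$ against the appropriate free module, then clearing denominators to lift everything to $R_f$ for some $f \in A$. Corollary \ref{cor:nzd-open} allows us to invert a final element of $A$ so that the image of $c$ remains a nonzerodivisor on each $\cF_{\kappa(\fp)}$ and hence, by the flatness of $\kappa(\fp) \to \kappa(\fp)^{1/p^d}$, on each $\cF_{\kappa(\fp)^{1/p^d},x}$.

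The main obstacle is verifying that the globally chosen $c$ continues to witness the comparison not just for $F_* \cF_x$ but for all iterated Frobenius pushforwards $F^e_* \cF_x$, uniformly in $e$ and across all fibers. This requires a careful recursive argument showing that some power $c^{N}$, with $N$ growing sub-exponentially in $e$, simultaneously kills the cokernels of the Lemma \ref{exact-sequences}-style comparison for $F^e_* \cF_x$ against a free module; the geometric reducedness hypothesis is exactly what is needed here, since it guarantees that reducedness and the inseparable-degree control of Proposition \ref{prop:constancy-Kunz-function} persist along all of the base changes $A \to \kappa(\fp) \to \kappa(\fp)^{1/p^d}$, and that the relative Frobenius remains injective so that the structure of $F_*$ across fibers does not degenerate. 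Once the consecutive-difference estimate is in hand with an absolute constant, Lemma \ref{ptconv} delivers the UBPH-K bound with data $(0,0)$, finishing the proof.
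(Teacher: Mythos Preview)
Your overall strategy---reduce to Lemma \ref{ptconv}, use Theorem \ref{pty} for boundedness, and establish a uniform consecutive-difference estimate---and your structural reductions on $A$ are correct and match the paper. The gap is in how you construct the comparison sequences, and the ``recursive argument'' you flag as the main obstacle is a symptom of that gap rather than an unavoidable difficulty.

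You propose to run Lemma \ref{exact-sequences} on the generic fiber, comparing $F_*\cF_K$ with a free module, then lift $c$ and worry about whether powers $c^N$ control all iterates $F^e_*\cF_x$. The paper sidesteps this completely by building the comparison \emph{once}, at the total-space level, using the \emph{relative} Frobenius. Concretely: view $\cF^{1/p}$ as a module over $R_{A^{1/p}}$ via the injection $F_{R/A}\colon R_{A^{1/p}}\hookrightarrow R^{1/p}$, and compare it not with a free module but with $\cF_{A^{1/p}}^{\oplus p^\delta}$. By Lemma \ref{min-prime-gen-fiber} one has $[R^{1/p}_\fq : R_{A^{1/p},\fq}] = p^\delta$ at each minimal prime $\fq$ of $R$, so these two $R_{A^{1/p}}$-modules agree generically and Lemma \ref{exact-sequences} (applied over $R_{A^{1/p}}$, not over a fiber) produces fixed cokernels $M_1,M_2$ killed by a single $c$. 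Now base-change these fixed sequences along $A^{1/p}\to\kappa(\fp)^{1/p^{d+1}}$, localize at $x$, and tensor with $R_x/\fP^{[p^e]}$: the left-hand terms become $(\cF_x/\fP^{[p^e]}\cF_x)^{\oplus p^\delta[\kappa(\fp)^{1/p}:\kappa(\fp)]}$ and $(\cF_x/\fP^{[p^{e+1}]}\cF_x)^{1/p}$, and the cokernels are $(M_i\otimes_{A^{1/p}}\kappa(\fp)^{1/p^d})_x/\fP^{[p^e]}(\cdots)$. A single application of Theorem \ref{pty} to the fixed $R_{A^{1/p}}$-modules $M_i$ bounds these for every $e$ simultaneously. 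There is no iteration in $e$ and no growing power of $c$: the same $c$ and the same $M_1,M_2$ work for all $e$ because the sequences were constructed before $\fP^{[p^e]}$ ever entered. Note also that comparing with a free module rather than with copies of $\cF$ would not give the right recursion for general $\cF$, and that building the sequences on $R_K$ and spreading out is problematic precisely because absolute Frobenius does not commute with base change---which is why the relative Frobenius over $R_{A^{1/p}}$ is the right framework.
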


\begin{proof}
If we localize $\varphi$ at an element $f \in A$, then the map $A_f \rightarrow R_f$ still satisfies the hypotheses of Setting \ref{gred} (see Remark \ref{rem:regular-locus-nonempty}(1)). Thus, we may replace $A$ by $A_f$ and $R$ by $R_f$ freely because UBPH-K is impervious to such localizations. In this proof, we will make a series of such localizations to make both $A$ and $\varphi$ nicer.

As a first step, after localizing $A$ at a suitable element, we may assume that $A$ is a regular domain (Remark \ref{rem:regular-locus-nonempty}(2)). For the rest of the proof, we set
\[
K \coloneqq \Frac(A).
\]
Note that Setting \ref{gred} assumes that $R_K$ is geometrically reduced and equidimensional. By Corollary \ref{cor:rel-Frob-injective} and Corollary \ref{cor:spreading-out-equidim} we may invert a further element of $A$ to assume that all fibers of $\varphi$ are geometrically reduced, equidimensional of dimension $= \dim(R_K)$, and that $R$ is free, hence faithfully flat \cite[\href{https://stacks.math.columbia.edu/tag/051R}{Tag 051R}]{stacks-project}. By Noether normalization for a finite type extension of a domain \cite[\href{https://stacks.math.columbia.edu/tag/07NA}{Tag 07NA}]{stacks-project}, there exists $f \in A$ and elements $t_1, \dots, t_\delta \in R_f$ such that $t_1, \dots, t_\delta$ are transcendental over $A_f$ and $R_f$ is a module finite extension of $A_f[t_1,\dots, t_\delta]$.


In summary, we may assume $\varphi: A \rightarrow R$ is a faithfully flat, finite type map where $A$ is a regular, $F$-finite domain, $R$ is a module-finite extension of a polynomial subalgebra $A[t_1,\dots,t_\delta]$ (hence $\delta = \dim(R) - \dim(A)$), and all the fibers of $\varphi$ are equidimensional and geometrically reduced. Moreover, as a consequence of the Direct Summand Theorem in prime characteristic \cite{Hoc73}, we know that $A[t_1,\dots,t_\delta] \hookrightarrow R$ splits. This means that for all $\fp \in \Spec(A)$, the fiber $R_{\kappa(\fp)}$ is a module-finite extension of the polynomial ring $\kappa(\fp)[t_1,\dots,t_\delta]$. In particular,  all fibers of $A \rightarrow R$ have dimension $\delta$.

Let $\fq$ be a minimal prime of $R$. Since $R_K$ is geometrically reduced, $R_{A^{1/p}}$ is reduced, hence so is $R$. Moreover, since $R \rightarrow R_{A^{1/p}}$ is purely inseparable, $R_{A^{1/p},\fq}$ is a field because $R_{\fq}$ is a field and $R_{A^{1/p},\fq}$ is reduced. The injective relative Frobenius (Corollary \ref{cor:rel-Frob-injective})
\[
F_{R/A}: R_{A^{1/p}} \hookrightarrow R^{1/p}
\]
gives us a tower of field extensions $R_{\fq} \hookrightarrow R_{A^{1/p}, \fq} \hookrightarrow R^{1/p}_{\fq}$.
Then
\begin{align*}
[R^{1/p}_{\fq}:R_{A^{1/p},\fq}] &=\frac{[R^{1/p}_{\fq}:R_{\fq}]}{[R_{A^{1/p},\fq}:R_{\fq}]}\\
&=\frac{[R^{1/p}_{\fq}:R_{\fq}]}{[K^{1/p}:K]}.
\end{align*}
Here the equality $[R_{A^{1/p},\fq}:R_\fq] = [K^{1/p}:K]$ follows from Lemma \ref{min-prime-gen-fiber}. Since $R_K$ is an equidimensional module-finite extension of $K[t_1,\dots,t_\delta]$, an application of Proposition \ref{prop:constancy-Kunz-function}(5) to the finite map $K[t_1,\dots,t_\delta] \hookrightarrow R_K$ then shows that
\[
[R^{1/p}_\fq:R_\fq] = [\kappa(\fq)^{1/p}:\kappa(\fq)]p^{\dim(R_\fq)} = [K(t_1,\dots,t_\delta)^{1/p}:K(t_1,\dots,t_\delta)] = [K^{1/p}:K]p^\delta.
\]
In particular, for every minimal prime ideal $\fq$ of $R$, we then have
\[
[R^{1/p}_{\fq}:R_{A^{1/p},\fq}] = p^\delta.
\]
Similarly, for all $e \geq 0$, 
\begin{align}
R^{1/p^e}_\fq \cong R_{A^{1/p^e},\fq}^{\oplus p^{e\delta}}\label{min-prime-iso}
\end{align} 
as $R_{A^{1/p^e},\fq}$-vector spaces. 

For the $R$-module $\cF$, we use the notation $\cF^{1/p}$ to denote the $R$-module whose underlying abelian group is the same as $\cF$, but whose $R$-linear structure is obtained by restriction of scalars via $F: R \rightarrow F_*R$. Consider the two $R_{A^{1/p}}$-modules $\cF^{1/p}$ and $\cF^{\oplus p^\delta} \otimes_R R_{A^{1/p}} = \cF_{A^{1/p}}^{\oplus p^\delta}$. For any minimal prime $\fq$ of $R$, 
\begin{align*}
    \dim_{R_{A^{1/p},\fq}}(\cF^{1/p}_\fq) &= \dim_{R_\fq^{1/p}}(\cF_\fq^{1/p})[R^{1/p}_\fq:R_{A^{1/p},\fq}] = \dim_{R_\fq}(\cF_\fq)p^\delta, \textrm{and}\\
    \dim_{R_{A^{1/p},\fq}}((\cF_{A^{1/p}}^{\oplus p^\delta})_\fq) &= \dim_{R_{A^{1/p},\fq}}(\cF_\fq^{\oplus p^\delta} \otimes_{R_\fq} R_{A^{1/p},\fq}) = \dim_{R_\fq}(\cF_\fq)p^\delta.
\end{align*}
Thus, for any minimal prime $\fq$ of $R$, $\cF_\fq^{1/p} \cong (\cF_{A^{1/p}}^{\oplus p^\delta})_\fq$ as $R_{A^{1/p},\fq}$-vector spaces. Hence, applying Lemma \ref{exact-sequences} to  $M = \cF^{\oplus p^\delta}_{A^{1/p}}$ and $N = \cF^{1/p}$, there exist exact sequences
\begin{align}
\cF_{A^{1/p}}^{\oplus p^\delta}\rightarrow \cF^{1/p}\rightarrow M_1\rightarrow 0\nonumber\\
\cF^{1/p}\rightarrow \cF_{A^{1/p}}^{\oplus p^\delta}\rightarrow M_2\rightarrow 0\label{seq1}
\end{align}
of finite $R_{A^{1/p}}$-modules (hence also of finite $R$-modules), and $c \in R$ in the complement of the union of the minimal primes of $R$ such that 
\[
(M_i)_{c} = 0.
\]

As $R$ is reduced, $c$ is a nonzerodivisor on $R$. 
Thus, Corollary \ref{cor:nzd-open} implies that after further localizing $A$ at some element, we may assume that for all $\fp \in \Spec(A)$, $c$ is a nonzerodivisor on the fiber $R_{\kappa(\fp)}$. In particular, for all $d > 0$ and for all $x \in \Spec(R_{\kappa(\fp)^{1/p^d}})$, since we have flat maps $R_{\kappa(\fp)} \rightarrow R_{\kappa(\fp)^{1/p^d}} \rightarrow R_{\kappa(\fp)^{1/p^d},x}$, the image of $c$ in $R_{\kappa(\fp)^{1/p^d},x}$ is also a nonzerodivisor. The upshot of these observations is that for all $\fp \in \Spec(A)$, $d > 0$, $x \in \Spec(R_{\kappa(\fp)^{1/p^d}})$,
\begin{align}
 \dim\left(M_i\otimes_{R_{A^{1/p}}}R_{\kappa(\fp)^{1/p^d}, x}\right) &<\dim (R_{\kappa(\fp)^{1/p^d},x})\label{ineq1},
\end{align}
because $M_i\otimes_{R_{A^{1/p}}}R_{\kappa(\fp)^{1/p^d}, x}$ is annihilated by a nonzerodivisor of $R_{\kappa(\fp)^{1/p^d}, x}$. For simplicity of notation in what follows, note that for $d > 0$,
\[
\textrm{$M_i\otimes_{R_{A^{1/p}}}R_{\kappa(\fp)^{1/p^d}} \cong M_i \otimes_{A^{1/p}} \kappa(\fp)^{1/p^d}$ and $M_i\otimes_{R_{A^{1/p}}}R_{\kappa(\fp)^{1/p^d},x} \cong (M_i \otimes_{A^{1/p}} \kappa(\fp)^{1/p^d})_x$}.
\]

We next claim that for all $d, e \geq 0$,
\begin{align}
[\kappa(x)^{1/p^e}:\kappa(x)]p^{e\dim \big{(}R_{\kappa(\fp)^{1/p^d},x}\big{)}} = [\kappa(\fp)^{1/p^e}:\kappa(\fp)]p^{e\delta}.\label{field-ext-2}
\end{align}
To see this, let $\tilde{x}$ be the unique point of $\Spec(R_{\kappa(\fp)})$ that corresponds to $x \in \Spec(R_{\kappa(\fp)^{1/p^d}})$. Then 
\[
[\kappa(x)^{1/p^e}:\kappa(x)]p^{e\dim \big{(}R_{\kappa(\fp)^{1/p^d},x}\big{)}} = [\kappa(\tilde{x})^{1/p^e}:\kappa(\tilde{x})]p^{e\dim\big{(}R_{\kappa(\fp),\tilde{x}}\big{)}}.
\]
Since $R_{\kappa(\fp)}$ is equidimensional, applying Proposition \ref{prop:constancy-Kunz-function}(5) to the finite extension
\[
\kappa(\fp)[t_1,\dots,t_\delta] \hookrightarrow R_{\kappa(\fp)}
\]
shows that
\[
[\kappa(\tilde{x})^{1/p^e}:\kappa(\tilde{x})]p^{e\dim\big{(}R_{\kappa(\fp),\tilde{x}}\big{)}} = [\kappa(\fp)(t_1,\dots,t_\delta)^{1/p^e}:\kappa(\fp)(t_1,\dots,t_\delta)] = [\kappa(\fp)^{1/p^e}:\kappa(\fp)]p^{e\delta},
\]
thereby establishing (\ref{field-ext-2}).

Now apply $-\otimes_{A^{1/p}}\kappa(\fp)^{1/p^{d+1}}$ to the sequences in (\ref{seq1}) to obtain exact sequences of $R_{\kappa(\fp)^{1/p^{d+1}}}$-modules
\begin{align}
(\cF_{\kappa(\fp)^{1/p^{d+1}}})^{\oplus p^\delta}\rightarrow \cF^{1/p}\otimes_{A^{1/p}}\kappa(\fp)^{1/p^{d+1}}\rightarrow M_1\otimes_{A^{1/p}}\kappa(\fp)^{1/p^{d+1}}\rightarrow 0\nonumber\\
\cF^{1/p}\otimes_{A^{1/p}}\kappa(\fp)^{1/p^{d+1}}\rightarrow(\cF_{\kappa(\fp)^{1/p^{d+1}}})^{\oplus p^\delta} \rightarrow M_2\otimes_{A^{1/p}}\kappa(\fp)^{1/p^{d+1}}\rightarrow 0\label{seq2}.
\end{align}
Because $[\kappa(\fp)^{1/p^{d+1}}: \kappa(\fp)^{1/p^d}] = [\kappa(\fp)^{1/p}:\kappa(\fp)]$, we have that $R_{\kappa(\fp)^{1/p^{d+1}}}$ is free of rank $[\kappa(\fp)^{1/p}:\kappa(\fp)]$ over $R_{\kappa(\fp)^{1/p^d}}$. Note also that 
\[
\cF^{1/p}\otimes_{A^{1/p}} \kappa(\fp)^{1/p^{d+1}}\cong (\cF \otimes_A {\kappa(\fp)^{1/p^d}})^{1/p} = (\cF_{\kappa(\fp)^{1/p^d}})^{1/p}.
\] 
We can therefore view (\ref{seq2}) as sequences of $R_{\kappa(\fp)^{1/p^d}}$-modules.  

Localizing at $x\in\Spec(R_{\kappa(\fp)})$, we obtain exact sequences of $R_{\kappa(\fp)^{1/p^d},x}$-modules
\begin{align}
(\cF_{\kappa(\fp)^{1/p^d},x})^{\oplus p^\delta[\kappa(\fp)^{1/p}:\kappa(\fp)]}\stackrel{\psi_1}{\rightarrow} (\cF_{\kappa(\fp)^{1/p^d},x})^{1/p} \rightarrow  \left(M_1\otimes_{A^{1/p}}\kappa(\fp)^{1/p^d}\right)_x^{\oplus [\kappa(\fp)^{1/p}:\kappa(\fp)]}\rightarrow 0\nonumber\\
(\cF_{\kappa(\fp)^{1/p^d},x})^{1/p}\stackrel{\psi_2}{\rightarrow} (\cF_{\kappa(\fp)^{1/p^d},x})^{\oplus p^\delta[\kappa(\fp)^{1/p}:\kappa(\fp)]}\rightarrow \left(M_2\otimes_{A^{1/p}}\kappa(\fp)^{1/p^d}\right)_x^{\oplus [\kappa(\fp)^{1/p}:\kappa(\fp)]}\rightarrow 0\nonumber
\end{align}
whose cokernels are also annihilated by the nonzerodivisor $c$ in $R_{\kappa(\fp)^{1/p^d},x}$. 

Now let 
\[
\fP \coloneqq \textrm{max ideal of $R_{\kappa(\fp)^{1/p^d},x}$}.
\] 
As $\left(\fP^{[p^e]}\right)^{[p]} = \fP^{[p^{e+1}]}$, 
upon tensoring by $R_{\kappa(\fp)^{1/p^d},x}/\fP^{[p^e]}$ the $\psi_i$ induce linear maps
\begin{align*}
\left(\frac{\cF_{\kappa(\fp)^{1/p^d},x}}{\fP^{[p^e]}\left(\cF_{\kappa(\fp)^{1/p^d},x}\right)}\right)^{\oplus p^\delta [\kappa(\fp)^{1/p}:\kappa(\fp)]}\stackrel{\psi_{1,e}}{\rightarrow} \left(\frac{\cF_{\kappa(\fp)^{1/p^d},x}}{\fP^{[p^{e+1}]}\left(\cF_{\kappa(\fp)^{1/p^d},x}\right)}\right)^{1/p}\\
\left(\frac{\cF_{\kappa(\fp)^{1/p^d},x}}{\fP^{[p^{e+1}]}\left(\cF_{\kappa(\fp)^{1/p^d},x}\right)}\right)^{1/p}\stackrel{\psi_{2,e}}{\rightarrow}\left(\frac{\cF_{\kappa(\fp)^{1/p^d},x}}{\fP^{[p^e]}\left(\cF_{\kappa(\fp)^{1/p^d},x}\right)}\right)^{\oplus p^\delta [\kappa(\fp)^{1/p}:\kappa(\fp)]}
\end{align*}
with 
\[
\coker\psi_{i,e} = \frac{(M_i\otimes_{A^{1/p}}\kappa(\fp)^{1/p^d})_x}{\fP^{[p^e]}(M_i\otimes_{A^{1/p}}\kappa(\fp)^{1/p^d})_x}.
\]
for every $e > 0$.

Theorem \ref{pty} applied to $A^{1/p}\rightarrow R_{A^{1/p}}$ with the $R_{A^{1/p}}$-modules $M_i$ and the regular $A^{1/p}$-algebra $\Gamma \coloneqq \kappa(\fp)^{1/p^d}$ implies the existence of a $\tilde{C}>0$ (independent of $\fp, d$, and $x$) such that for $i = 1,2$, and for all $e > 0$,
\begin{align}
    &\ell_{R_{\kappa(\fp)^{1/p^d},x}}\left(\coker\psi_{i,e}\right)\nonumber = \ell_{R_{\kappa(\fp)^{1/p^d},x}}\left(\frac{(M_i\otimes_{A^{1/p}}\kappa(\fp)^{1/p^d})_x}{\fP^{[p^e]}(M_i\otimes_{A^{1/p}}\kappa(\fp)^{1/p^d})_x}\right)\nonumber\\
    &\stackrel{\ref{pty}}{\leq} \tilde{C}p^{e\dim \left((M_i\otimes_{A^{1/p}}\kappa(\fp)^{1/p^d})_x\right)} \stackrel{(\ref{ineq1})}{\leq} \frac{\tilde{C}p^{e\dim(R_{\kappa(\fp)^{1/p^d},x})}}{p^e},\label{ineq2}
\end{align}
where by $\dim(M_i\otimes_{A^{1/p}}\kappa(\fp)^{1/p^d})$ we mean its dimension as an $R_{\kappa(\fp)^{1/p^d}}$-module.

Letting $\ell(-)$ denote length over $R_{\kappa(\fp)^{1/p^d},x}$, it follows that
\begin{align*}
\left| p^\delta [\kappa(\fp)^{1/p}:\kappa(\fp)]\ell\left(\frac{\cF_{\kappa(\fp)^{1/p^d},x}}{\fP^{[p^e]}\left(\cF_{\kappa(\fp)^{1/p^d},x}\right)}\right)-\ell\left(\left(\frac{\cF_{\kappa(\fp)^{1/p^d},x}}{\fP^{[p^{e+1}]}\left(\cF_{\kappa(\fp)^{1/p^d},x}\right)}\right)^{1/p}\right) \right|\\
\stackrel{(\ref{length-frob})}{=}\left| p^\delta [\kappa(\fp)^{1/p}:\kappa(\fp)]\ell\left(\frac{\cF_{\kappa(\fp)^{1/p^d},x}}{\fP^{[p^e]}\left(\cF_{\kappa(\fp)^{1/p^d},x}\right)}\right)-[\kappa(x)^{1/p}:\kappa(x)]\ell\left(\frac{\cF_{\kappa(\fp)^{1/p^d},x}}{\fP^{[p^{e+1}]}\left(\cF_{\kappa(\fp)^{1/p^d},x}\right)}\right) \right|\\
\leq\max_{i=1,2}\{\ell(\coker\psi_{i,e})\} \stackrel{(\ref{ineq2})}{\leq} \frac{\tilde{C}p^{e\dim( R_{\kappa(\fp)^{1/p^d},x})}}{p^e}.
\end{align*}
Dividing both sides of the above chain of inequalities by $[\kappa(x)^{1/p}:\kappa(x)]p^{(e+1)\dim (R_{\kappa(\fp)^{1/p^d},x})}$, and using the identity
\begin{align}
[\kappa(x)^{1/p}:\kappa(x)]p^{\dim (R_{\kappa(\fp)^{1/p^d},x})} = [\kappa(\fp)^{1/p}:\kappa(\fp)]p^{\delta}\label{fiber-insep-deg}
\end{align}
established in (\ref{field-ext-2}), we then get
\begin{align}
\left|\frac{\ell\left(\frac{\cF_{\kappa(\fp)^{1/p^d},x}}{\fP^{[p^e]}\left(\cF_{\kappa(\fp)^{1/p^d},x}\right)}\right)}{p^{e\dim( R_{\kappa(\fp)^{1/p^d},x})}} - \frac{\ell\left(\frac{\cF_{\kappa(\fp)^{1/p^d},x}}{\fP^{[p^{e+1}]}\left(\cF_{\kappa(\fp)^{1/p^d},x}\right)}\right)}{p^{(e+1)\dim( R_{\kappa(\fp)^{1/p^d},x})}} \right| \leq\frac{\tilde{C}}{p^e[\kappa(\fp)^{1/p}:\kappa(\fp)]p^{\delta}}\leq \frac{\tilde{C}}{p^ep^\delta}.\label{ineq-phew}
\end{align}
The result follows from Lemma \ref{ptconv} taking $C:=\frac{2\tilde C}{p^\delta}$. 
\end{proof}

\begin{corollary}\label{reduced-lift:UBPH-K}
If $\cF$ is a finitely generated $R_{\red}$-module, where $A\rightarrow R_{\red}$ is as in Setting \ref{gred}, then $(\cF, A\rightarrow R)$ satisfies UBPH-K with data $(0,0)$.
\end{corollary}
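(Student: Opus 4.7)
The plan is to reduce directly to Theorem \ref{prop:UBPH-K} applied to the map $A \to R_{\red}$, which by hypothesis lies in Setting \ref{gred}, and then to transfer the resulting estimate from $R_{\red}$-data to $R$-data. Theorem \ref{prop:UBPH-K} immediately gives UBPH-K with data $(0,0)$ for the pair $(\cF, A \to R_{\red})$: some $0 \neq g \in A$ and constant $C > 0$ for which the inequality \eqref{ubphk-ineq} holds for all $\fp \in \Spec A_g$, all $e, d > 0$, and all $x \in \Spec((R_{\red})_{\kappa(\fp)^{1/p^d}})$, with $R$ replaced by $R_{\red}$ throughout. Everything else is bookkeeping.

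For the transfer, I would first record that since $R$ is Noetherian, its nilradical is a nilpotent ideal. Base-changing $R \twoheadrightarrow R_{\red}$ along $A \to \kappa(\fp)^{1/p^d}$ yields a surjection
\[
R_{\kappa(\fp)^{1/p^d}} \twoheadrightarrow (R_{\red})_{\kappa(\fp)^{1/p^d}}
\]
whose kernel is generated by the images of those nilpotents, and hence is itself nilpotent. Consequently the two fiber rings have the same underlying topological space, and for any common point $x$ the localizations $R_{\kappa(\fp)^{1/p^d},x}$ and $(R_{\red})_{\kappa(\fp)^{1/p^d},x}$ differ only by a nilpotent ideal, so they share Krull dimension and residue field. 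In particular, the quantification over $x$ and the denominator $p^{e \dim(R_{\kappa(\fp)^{1/p^d},x})}$ in \eqref{ubphk-ineq} are unaffected by the switch from $R$ to $R_{\red}$.

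Next, I would use the $R_{\red}$-module structure of $\cF$: the action of $R_{\kappa(\fp)^{1/p^d},x}$ on $\cF_{\kappa(\fp)^{1/p^d},x}$ factors through $(R_{\red})_{\kappa(\fp)^{1/p^d},x}$. Writing $\fP$ and $\fP'$ for the maximal ideals of these two local rings, this factorization forces
\[
\fP^{[p^e]}\, \cF_{\kappa(\fp)^{1/p^d},x} \;=\; (\fP')^{[p^e]}\, \cF_{\kappa(\fp)^{1/p^d},x}
\]
for every $e \geq 0$: each generator $y^{p^e}$ of $\fP^{[p^e]}$ acts on $\cF$ exactly as $\overline{y}^{p^e}$ does with $\overline{y} \in \fP'$, while conversely every element of $\fP'$ lifts to one of $\fP$. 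The length appearing in \eqref{ubphk-ineq} is therefore the same whether computed over $R_{\kappa(\fp)^{1/p^d},x}$ or $(R_{\red})_{\kappa(\fp)^{1/p^d},x}$ (also because the residue fields agree), and passing to the limit the local Hilbert--Kunz multiplicities $\ehk(\cF_{\kappa(\fp)^{1/p^d},x})$ computed in either local ring coincide. Hence the UBPH-K estimate for $(\cF, A \to R_{\red})$ transfers verbatim to $(\cF, A \to R)$ with the same $g$, $C$, and data $(0,0)$.

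There is essentially no substantive obstacle: the entire content is the elementary observation that the nilradical of a Noetherian ring remains nilpotent after arbitrary base change, so that nothing entering the UBPH-K inequality is altered when $R$ is replaced by $R_{\red}$.
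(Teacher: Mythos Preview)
Your proposal is correct and follows essentially the same approach as the paper: apply Theorem \ref{prop:UBPH-K} to $(\cF, A \to R_{\red})$, then transfer the estimate to $(\cF, A \to R)$ by observing that the surjection $R_{\kappa(\fp)^{1/p^d}} \twoheadrightarrow (R_{\red})_{\kappa(\fp)^{1/p^d}}$ has nilpotent kernel, so that dimensions, residue fields, Frobenius-power submodules of $\cF$, lengths, and hence the Hilbert--Kunz multiplicities all agree. The paper's proof is organized identically, with the only cosmetic difference being that it explicitly names the corresponding point $\tilde{x} \in \Spec((R_{\red})_{\kappa(\fp)^{1/p^d}})$ rather than identifying the two spectra as topological spaces.
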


\begin{proof}
For any $\fp \in \Spec(A)$, $(R_{\red})_{\kappa(\fp)^{1/p^d}}$ is a quotient of $R_{\kappa(\fp)^{1/p^d}}$ by a nilpotent ideal. Thus, for any $x \in \Spec(R_{\kappa(\fp)^{1/p^d}})$, if $\tilde{x} \in \Spec((R_{\red})_{\kappa(\fp)^{1/p^d}})$ is the prime corresponding to $x$, then 
\[
\dim\big{(}(R_{\red})_{\kappa(\fp)^{1/p^d},\tilde{x}}\big{)} = \dim\big{(}R_{\kappa(\fp)^{1/p^d},x}\big{)}.
\]
Since $\cF$ is an $R_{\red}$-module by hypothesis, it follows that 
\[
\cF_{\kappa(\fp)^{1/p^d},x} = \cF_{\kappa(\fp)^{1/p^d},\tilde{x}}
\]
as $(R_{\red})_{\kappa(\fp)^{1/p^d},\tilde{x}}$-modules. In particular, if $\fP$ (resp. $\tilde{\fP}$) is the maximal ideal of $R_{\kappa(\fp)^{1/p^d},x}$ (resp. $(R_{\red})_{\kappa(\fp)^{1/p^d},\tilde{x}}$), then for any $e > 0$,
\[
\ell_{R_{\kappa(\fq)^{1/p^d},x}}\left(\frac{\cF_{\kappa(\fq)^{1/p^d},x}}{\fP^{[p^e]}(\cF_{\kappa(\fq)^{1/p^d},x})}\right) = \ell_{(R_{\red})_{\kappa(\fp)^{1/p^d},\tilde{x}}}\left(\frac{\cF_{\kappa(\fp)^{1/p^d},\tilde{x}}}{\tilde{\fP}^{[p^e]}(\cF_{\kappa(\fp)^{1/p^d},\tilde{x}})}\right), 
\]
and so,
the Hilbert--Kunz multiplicity of the $R_{\kappa(\fp)^{1/p^d},x}$-module $\cF_{\kappa(\fp)^{1/p^d},x}$ coincides with the Hilbert--Kunz multiplicity of the $(R_{\red})_{\kappa(\fp)^{1/p^d},\tilde{x}}$-module $\cF_{\kappa(\fp)^{1/p^d},\tilde{x}}$.

 By Theorem \ref{prop:UBPH-K}, $(\cF, A\rightarrow R_{\red})$ satisfies UBPH-K with data $(0,0)$. Invert an element of $A$ and obtain a constant $C>0$ as in Theorem \ref{prop:UBPH-K}. Then by the above discussion, for all $d, e > 0$, for all $\fp \in \Spec(A_g)$ and  all $x \in \Spec(R_{\kappa(\fp)^{1/p^d}})$ (with corresponding $\tilde{x} \in \Spec((R_{\red})_{\kappa(\fp)^{1/p^d}})$) we have
 \begin{align*}
 &\left|\ehk\left(\cF_{\kappa(\fp)^{1/p^d},x}\right)-\frac{\ell_{R_{\kappa(\fp)^{1/p^d},x}}\left(\frac{\cF_{\kappa(\fp)^{1/p^d},x}}{\fP^{[p^e]}\left(\cF_{\kappa(\fp)^{1/p^d},x}\right)}\right)}{p^{e\dim\left(R_{\kappa(\fp)^{1/p^d},x}\right)}}\right| = \\
 & \left|\ehk\left(\cF_{\kappa(\fp)^{1/p^d},\tilde{x}}\right)-\frac{\ell_{(R_{\red})_{\kappa(\fp)^{1/p^d},\tilde{x}}}\left(\frac{\cF_{\kappa(\fp)^{1/p^d},\tilde{x}}}{\tilde{\fP}^{[p^e]}\left(\cF_{\kappa(\fp)^{1/p^d},\tilde{x}}\right)}\right)}{p^{e\dim\left((R_{\red})_{\kappa(\fp)^{1/p^d},\tilde{x}}\right)}}\right| \leq \frac{C}{p^e}.
\end{align*}
 Thus, $(\cF, A \rightarrow R)$ satisfies UBPH-K with data $(0,0)$, as claimed.
\end{proof}

\subsection{Uniform boundedness of Hilbert--Kunz and non-reduced fibers}
In this subsection, we obtain a partial generalization of Theorem \ref{prop:UBPH-K} for finite type maps of $F$-finite Noetherian rings whose fibers are not necessarily geometrically reduced. Our generalization is partial since we cannot obtain UBPH-K with data $(0,0)$ on the nose. However, in the study of asymptotic behavior of Hilbert--Kunz multiplicity, one often only needs UBPH-K with data $(d,e)$, for $d, e \gg 0$, and we can successfully obtain UBPH-K up to such a large choice of $d$ and $e$ (see Theorem \ref{thm:UBPH-K-general}). 

First, we fix the setting in which we will work throughout this subsection.

\begin{setting}\label{general-setting}
Let $\varphi:A\rightarrow R$ be a finite type map of $F$-finite rings such that the regular locus of $A$ is non-empty, and $\varphi$ has equidimensional generic fibers.
\end{setting}

\noindent The difference between Setting \ref{gred} and Setting \ref{general-setting} is that in the latter, we no longer assume that the generic fibers of $\varphi$ are geometrically reduced.

\par Our goal in this section is to prove the following result:

\begin{theorem}\label{thm:UBPH-K-general}
Let $\varphi:A\rightarrow R$ be as in Setting \ref{general-setting}. Then there exists $e_0>0$ such that $(R,\varphi)$ satisfies UBPH-K with data $(e_0,e_0)$.
\end{theorem}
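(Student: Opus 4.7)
My plan is to follow the three-step ``twist / apply reduced case / untwist'' strategy from the introduction.

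First, I would invoke Lemma \ref{lem:making-fibers-geometrically-reduced} (whose existence combines Proposition \ref{Kevins-claim} with Proposition \ref{prop:equidim-base-field-ext}(4)) to choose $e_0 > 0$ large enough so that (i) the induced map $\tilde{\varphi}\colon A^{1/p^{e_0}} \to (R_{A^{1/p^{e_0}}})_{\red}$ has geometrically reduced and equidimensional generic fibers, and (ii) the Frobenius pushforward $F^{e_0}_*R$ is naturally a finitely generated $(R_{A^{1/p^{e_0}}})_{\red}$-module. Since $A^{1/p^{e_0}}$ inherits $F$-finiteness and a nonempty regular locus from $A$, the map $\tilde{\varphi}$ will satisfy the hypotheses of Setting \ref{gred}. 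Applying Corollary \ref{reduced-lift:UBPH-K} to the map $A^{1/p^{e_0}}\to R_{A^{1/p^{e_0}}}$ with the $(R_{A^{1/p^{e_0}}})_{\red}$-module $F^{e_0}_*R$ then yields that $(F^{e_0}_*R,\, A^{1/p^{e_0}}\to R_{A^{1/p^{e_0}}})$ satisfies UBPH-K with data $(0,0)$.

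Next comes the untwisting step. Primes of $A$ lift uniquely to primes of $A^{1/p^{e_0}}$ via $\fp\mapsto \fp A^{1/p^{e_0}}$, with $\kappa(\fp A^{1/p^{e_0}}) = \kappa(\fp)^{1/p^{e_0}}$. Hence for $d \geq e_0$, setting $d' := d - e_0$, we have $\kappa(\fp A^{1/p^{e_0}})^{1/p^{d'}} = \kappa(\fp)^{1/p^d}$ and
\[
R_{A^{1/p^{e_0}}}\otimes_{A^{1/p^{e_0}}}\kappa(\fp)^{1/p^d} \;\cong\; R_{\kappa(\fp)^{1/p^d}},
\]
so primes $y$ on the twisted fiber biject with primes $x$ on $R_{\kappa(\fp)^{1/p^d}}$, with coinciding local rings. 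Writing $S := R_{\kappa(\fp)^{1/p^d},x}$ with maximal ideal $\fP$ and residue field $k_x$, I expect the localized module $(F^{e_0}_*R)_{\kappa(\fp)^{1/p^d},x}$ to identify with $F^{e_0}_*S$. A direct application of (\ref{length-frob}) and (\ref{bracket-tensor}) then gives
\[
\ell_S\bigl(F^{e_0}_*S / \fP^{[p^e]}F^{e_0}_*S\bigr) \;=\; [k_x^{1/p^{e_0}}:k_x]\,\ell_S\bigl(S/\fP^{[p^{e+e_0}]}\bigr),
\]
and in the limit $\ehk_S(F^{e_0}_*S) = [k_x^{1/p^{e_0}}:k_x]\,p^{e_0\dim S}\,\ehk_S(S)$. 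Dividing the UBPH-K inequality for $(F^{e_0}_*R,\tilde{\varphi})$ at step $e$ by the common factor $[k_x^{1/p^{e_0}}:k_x]p^{e_0\dim S}$ and relabeling $\tilde{e} := e + e_0$ will produce a UBPH-K inequality for $(R,\varphi)$ at step $\tilde{e} > e_0$ and base-change parameter $d = d'+e_0 > e_0$, with uniformly adjusted constant $C' \leq Cp^{e_0}$. This gives exactly UBPH-K for $(R,\varphi)$ with data $(e_0,e_0)$.

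I expect the main obstacle to be the twisting step: arranging $e_0$ so that $F^{e_0}_*R$ descends to a module over $(R_{A^{1/p^{e_0}}})_{\red}$ requires simultaneously stabilizing the nilpotents in $R_K \otimes_K K^{1/p^{e_0}}$ and controlling those inside the total space $R_{A^{1/p^{e_0}}}$. This is also precisely why the theorem can only deliver data $(e_0,e_0)$ rather than $(0,0)$: both the base-change parameter $d$ and the Frobenius step $e$ must exceed $e_0$ to absorb the twist. The remaining arguments should be essentially bookkeeping---matching spectra and residue fields across the purely inseparable base change, and tracking how the local Hilbert--Kunz multiplicity rescales under Frobenius pushforward.
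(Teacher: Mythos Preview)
Your overall strategy matches the paper's: invoke Lemma \ref{lem:making-fibers-geometrically-reduced} to choose $e_0$, apply Corollary \ref{reduced-lift:UBPH-K} to the $(R_{A^{1/p^{e_0}}})_{\red}$-module $F^{e_0}_*R$, and then untwist. The gap is in the untwisting step, specifically in the claimed identification $(F^{e_0}_*R)_{\kappa(\fp)^{1/p^d},x}\cong F^{e_0}_*S$ with $S=R_{\kappa(\fp)^{1/p^d},x}$.

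Write $\fq=\fp A^{1/p^{e_0}}$ and $d'=d-e_0$, so that $\kappa(\fq)^{1/p^{d'}}=\kappa(\fp)^{1/p^d}$. The fiber module in the UBPH-K inequality for $(F^{e_0}_*R,\,A^{1/p^{e_0}}\to R_{A^{1/p^{e_0}}})$ is
\[
F^{e_0}_*R\;\otimes_{A^{1/p^{e_0}}}\;\kappa(\fq)^{1/p^{d'}}.
\]
Under the ring isomorphism $A\cong A^{1/p^{e_0}}$, the $A^{1/p^{e_0}}$-module $F^{e_0}_*R$ is just $R$ as an $A$-module, and $\kappa(\fq)^{1/p^{d'}}$ corresponds to $\kappa(\fp)^{1/p^{d'}}$. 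Hence as an abelian group this tensor product is $R_{\kappa(\fp)^{1/p^{d'}}}=R_{\kappa(\fp)^{1/p^{d-e_0}}}$, \emph{not} $R_{\kappa(\fp)^{1/p^d}}$. Equivalently, by base change of the relative Frobenius one obtains $F^{e_0}_*(R_{\kappa(\fp)^{1/p^{d-e_0}}})$ rather than $F^{e_0}_*S$. The two differ precisely by a factor of $[\kappa(\fp)^{1/p^{e_0}}:\kappa(\fp)]$: one has an $R_{\kappa(\fp)^{1/p^d}}$-linear decomposition
\[
F^{e_0}_*S\;\cong\;\bigl(F^{e_0}_*R\otimes_{A^{1/p^{e_0}}}\kappa(\fp)^{1/p^d}\bigr)^{\oplus[\kappa(\fp)^{1/p^{e_0}}:\kappa(\fp)]},
\]
and this extra factor must be balanced against the factor $[k_x^{1/p^{e_0}}:k_x]$ coming from (\ref{length-frob}). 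The paper handles this by first arranging a Noether normalization $A^{1/p^{e_0}}[t_1,\dots,t_\delta]\hookrightarrow R_{A^{1/p^{e_0}}}$ with equidimensional fibers and then invoking the identity (\ref{field-ext-2}), which gives exactly $[k_x^{1/p^{e_0}}:k_x]\,p^{e_0\dim S}=[\kappa(\fp)^{1/p^{e_0}}:\kappa(\fp)]\,p^{e_0\delta}$. Without this step the ratio of the two inseparability degrees depends on $x$, and your ``divide and relabel'' does not go through as stated. Once you correct the identification and insert (\ref{field-ext-2}), the rest of your outline coincides with the paper's argument.
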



The proof of Theorem \ref{thm:UBPH-K-general} relies on Theorem \ref{prop:UBPH-K}, where the generic fibers of $\varphi$ are geometrically reduced. In order to make the transition from arbitrary equidimensional generic fibers to ones with equidimensional and geometrically reduced generic fibers, we will use the following lemma:

\begin{lemma}
\label{lem:making-fibers-geometrically-reduced}
Let $\varphi: A \rightarrow R$ be as in Setting \ref{general-setting}. Then there exists $f \in A$ and $e_0 > 0$ such that:
\begin{enumerate}
    \item $A_f$ is a regular domain and $(R_{A_f^{1/p^{e_0}}})_{\red}$ is faithfully flat over $A_f^{1/p^e_0}$.
    \item The generic fiber of $A^{1/p^{e_0}}_f \rightarrow (R_{A_f^{1/p^{e_0}}})_{\red}$ (equivalently, of $A^{1/p^{e_0}} \rightarrow (R_{A^{1/p^{e_0}}})_{\red}$) is geometrically reduced.
    \item $F^{e_0}_*(R_f)$ is an $(R_{A_f^{1/p^{e_0}}})_{\red}$-algebra.
\end{enumerate}
\end{lemma}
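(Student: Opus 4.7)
The plan is to use Proposition~\ref{Kevins-claim} to fix an $e_0 > 0$ ensuring that the generic fiber of $A^{1/p^{e_0}} \to (R_{A^{1/p^{e_0}}})_{\red}$ is geometrically reduced, then combine Remark~\ref{rem:regular-locus-nonempty}(2) with generic freeness to produce $f \in A$, and finally construct the algebra structure in (3) via the natural relative-Frobenius-type map, enlarging $e_0$ as needed. The main technical obstacle lies in (3), for which the subtle step is showing that the image of a certain map is reduced inside $F^{e_0}_*(R_f) = R_f$.

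First I would invert an element of $A$ via Remark~\ref{rem:regular-locus-nonempty}(2) to assume $A$ is a regular domain with fraction field $K$. Then Proposition~\ref{Kevins-claim} applied to $X = \Spec(R)$ over $\Spec(A)$ produces $e_0 > 0$ such that the generic fiber $(R_{K^{1/p^{e_0}}})_{\red}$ of $A^{1/p^{e_0}} \to (R_{A^{1/p^{e_0}}})_{\red}$ is geometrically reduced; this gives (2) regardless of further localization of $A$, and remains valid upon enlarging $e_0$. For (1), since $(R_{A^{1/p^{e_0}}})_{\red}$ is a finitely generated $A^{1/p^{e_0}}$-algebra, generic freeness \cite[Tag 051R]{stacks-project} yields $h \in A^{1/p^{e_0}}$ with $((R_{A^{1/p^{e_0}}})_{\red})_h$ free over $(A^{1/p^{e_0}})_h$. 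Since $A \hookrightarrow A^{1/p^{e_0}}$ is integral, $h^{p^{e_0}} \in A$, and inverting $h^{p^{e_0}}$ in $A^{1/p^{e_0}}$ is the same as inverting $h$. Set $f$ to be $h^{p^{e_0}}$ multiplied by the element initially inverted: then $A_f$ is a regular domain and $(R_{A_f^{1/p^{e_0}}})_{\red}$ is free of positive rank over $A_f^{1/p^{e_0}}$---the generic fiber is nonzero since, by Setting~\ref{general-setting} (and Corollary~\ref{cor:spreading-out-equidim} after possibly shrinking $f$), it is equidimensional of dimension $\dim R_K$. Free of positive rank over a domain is faithful flatness.

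For (3), the natural ring homomorphism
\[
\phi_{e}:\ R_f \otimes_{A_f} A_f^{1/p^{e}} \longrightarrow F^{e}_*(R_f),\qquad r \otimes a^{1/p^{e}} \mapsto F^{e}_*(a r^{p^{e}}),
\]
endows $F^{e}_*(R_f)$ with an algebra structure over its source; what I need, after possibly replacing $e_0$ by a larger value, is that $\phi_{e_0}$ descends to the quotient $(R_f \otimes_{A_f} A_f^{1/p^{e_0}})_{\red}$, equivalently that the image subring $A_f \cdot R_f^{p^{e_0}} \subseteq F^{e_0}_*(R_f) = R_f$ is reduced. This is where the real work is. The mechanism is the compatibility
\[
\phi_{e'}(x) = \phi_e(x)^{p^{e'-e}} \quad \text{in } R_f,\ \text{for } x \in R_f \otimes_{A_f} A_f^{1/p^e} \text{ and } e' \geq e,
\]
together with the observation that $\phi_e$ always sends nilpotents of its source into the finitely generated nilpotent ideal $\nil(R_f)$, which has some bounded nilpotency index $N$ by Noetherianity. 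Once the geometric reducedness of $(R_{K^{1/p^{e_0}}})_{\red}$ is in place, I would spread out further (shrinking $f$ again if necessary) so that for every $e' \geq e_0$ the nilradical of $R_f \otimes A_f^{1/p^{e'}}$ is generated, as an ideal, by the nilradical of $R_f \otimes A_f^{1/p^{e_0}}$; this is legitimate because the nilpotent structure of the generic fiber stabilizes once it becomes geometrically reduced. Choosing $e' = e_0 + k$ with $p^{k} \geq N$, every nilpotent $x$ of $R_f \otimes A_f^{1/p^{e'}}$ is then a sum of products $g \cdot s$ with $g \in \nil(R_f \otimes A_f^{1/p^{e_0}})$, and $\phi_{e'}(g) = \phi_{e_0}(g)^{p^{k}} \in \nil(R_f)^{p^k} = 0$. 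Relabeling $e'$ as the new $e_0$ then yields the desired factoring, proving (3).
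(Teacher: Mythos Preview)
Your proposal is correct and follows essentially the same route as the paper: both invoke Proposition~\ref{Kevins-claim} for an initial $e_0$, use generic freeness for (1), and handle (3) by enlarging $e_0$ so that the relative Frobenius kills the nilradical, relying on the fact that for $e' \geq e_0$ the nilradical of $R_f \otimes A_f^{1/p^{e'}}$ is the extension of that of $R_f \otimes A_f^{1/p^{e_0}}$. The only notable difference is that where you invoke a vague ``spread out further (shrinking $f$ again if necessary)'' for this nilradical-extension step, the paper proves it directly from the flatness already obtained in (1): since $(R_{A_f^{1/p^{e_0}}})_{\red}$ is flat over $A_f^{1/p^{e_0}}$, its base change to $A_f^{1/p^{e'}}$ embeds in the (reduced) base change of the generic fiber, so no additional shrinking of $f$ is needed and (1), (2) persist automatically after relabeling $e_0$.
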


\begin{notation}
\label{not:horrible}
If $R$ is a domain, we prefer to use $R^{1/p^e}$ instead of $F^e_*(R)$, while if $R$ is not reduced, we use $F^e_*(R)$. Sometimes this leads to a combination of $F^e_*$'s and $(\hspace{1mm})^{1/p^e}$'s appearing in the same expression. We hope this does not cause any confusion.
\end{notation}

\begin{proof}[Proof of Lemma \ref{lem:making-fibers-geometrically-reduced}]
After localizing $A$ at a suitable element, we may assume that $A$ is a regular domain (Remark \ref{rem:regular-locus-nonempty}(2)) with fraction field $K$.  Let $e_0>0$ be as in Proposition \ref{Kevins-claim} so that the generic fiber of the composition
\[
A^{1/p^{e_0}}\xrightarrow{\varphi_{A^{1/p^{e_0}}}} R_{A^{1/p^{e_0}}} \xrightarrow{\pi} (R_{A^{1/p^{e_0}}})_{\red}
\]
is geometrically reduced. Since $\pi \circ \varphi_{A^{1/p^{e_0}}}$ is of finite type and $A^{1/p^{e_0}}$ is now a domain, by generic freeness \cite[\href{https://stacks.math.columbia.edu/tag/051R}{Tag 051R}]{stacks-project} we may invert an element of the regular domain $A$ (because $A \rightarrow A^{1/p^{e_0}}$ is purely inseparable) so that $((R_{A^{1/p^{e_0}}})_{\red})_f$ is a free, hence faithfully flat, $A_f^{1/p^{e_0}}$-module. As localization commutes with taking nilradicals, we have
\[
((R_{A^{1/p^{e_0}}})_{\red})_f = (R_{A_f^{1/p^{e_0}}})_{\red}.
\]
Note that $(\pi \circ \varphi_{A^{1/p^{e_0}}})_f = \pi_f \circ \varphi_{A_f^{1/p^{e_0}}}$ also has geometrically reduced generic fiber.


Fix any $e \geq e_0$, and consider the map
\[
R_{A_f^{1/p^{e_0}}} \rightarrow R_{A_f^{1/p^{e}}}
\]
induced by base change of the map $A_f^{1/p^{e_0}} \rightarrow A_f^{1/p^e}$. We claim that
\begin{equation}
\label{eq:red-base-change}
(R_{A_f^{1/p^{e}}})_{\red} = R_{A_f^{1/p^{e}}} \otimes_{R_{A_f^{1/p^{e_0}}}} (R_{A_f^{1/p^{e_0}}})_{\red},
\end{equation}
that is, the nilradical of $R_{A_f^{1/p^{e_0}}}$ expands to the nilradical of $R_{A_f^{1/p^{e}}}$.  
Observe that 
$R_{A_f^{1/p^{e}}} \otimes_{R_{A_f^{1/p^{e_0}}}} (R_{A_f^{1/p^{e_0}}})_{\red} = A^{1/p^e}_f \otimes_{A_f^{1/p^{e_0}}} (R_{A_f^{1/p^{e_0}}})_{\red}$. Since the generic fiber, 
\[
K^{1/{p^{e_0}}} \otimes_{A_f^{1/p^{e_0}}} (R_{A_f^{1/p^{e_0}}})_{\red},
\]
of $(\pi \circ \varphi_{A^{1/p^{e_0}}})_f$ is geometrically reduced, it follows that for the field extension $K^{1/p^e}$ of $K^{1/p^{e_0}}$, 
\[
K^{1/{p^{e}}} \otimes_{A_f^{1/p^{e_0}}} (R_{A_f^{1/p^{e_0}}})_{\red} = K^{1/p^e} \otimes_{K^{1/p^{e_0}}} \big{(}K^{1/{p^{e_0}}} \otimes_{A_f^{1/p^{e_0}}} (R_{A_f^{1/p^{e_0}}})_{\red}\big{)}
\]          
is reduced. By flatness of the $A_f^{1/p^{e_0}}$-module $(R_{A_f^{1/p^{e_0}}})_{\red}$, we then have that 
\[
R_{A_f^{1/p^{e}}} \otimes_{R_{A_f^{1/p^{e_0}}}} (R_{A_f^{1/p^{e_0}}})_{\red} = A^{1/p^e}_f \otimes_{A_f^{1/p^{e_0}}} (R_{A_f^{1/p^{e_0}}})_{\red}
\]
is a subring of the reduced ring $K^{1/{p^{e}}} \otimes_{A_f^{1/p^{e_0}}} (R_{A_f^{1/p^{e_0}}})_{\red}$, proving the claim. Furthermore,
\[
(R_{A_f^{1/p^{e}}})_{\red} = A^{1/p^e}_f \otimes_{A_f^{1/p^{e_0}}} (R_{A_f^{1/p^{e_0}}})_{\red}
\]
is flat over $A^{1/p^e}_f$ by base change, and the generic fiber of $A^{1/p^e}_f \rightarrow (R_{A_f^{1/p^{e}}})_{\red}$ is geometrically reduced since it is a base change of the generic fiber of $A^{1/p^{e_0}}_f \rightarrow (R_{A_f^{1/p^{e_0}}})_{\red}$.

For the $R_{A_f^{1/p^{e_0}}}$-algebra $F^{e_0}_*(R_f)$, choose $e_1 \gg 0$ such that the image of the nilradical of $F^{e_0}_*(R_f)$, hence also of $R_{A_f^{1/p^{e_0}}}$, is killed in $F^{e_0+e_1}_*(R_f)$. 
Thus, $F^{e_0+e_1}_*(R_f)$ is an $(R_{A_f^{1/p^{e_0}}})_{\red}$-algebra. Since the nilradical of $R_{A_f^{1/p^{e_0}}}$ expands to the nilradical of $R_{A_f^{1/p^{e_0 + e_1}}}$ by our discussion above, it follows that $F^{e_0+e_1}_*(R_f)$ is also an $(R_{A_f^{1/p^{e_0 + e_1}}})_{\red}$-algebra. Furthermore, we also show in the previous paragraph that $(R_{A_f^{1/p^{e_0 + e_1}}})_{\red}$ is flat over $A_f^{1/p^{e_0 + e_1}}$ and $A_f^{1/p^{e_0 + e_1}} \rightarrow (R_{A_f^{1/p^{e_0 + e_1}}})_{\red}$ has geometrically reduced generic fiber. Then relabelling $e_0 + e_1$ as $e_0$, we win!
\end{proof}

We can now prove Theorem \ref{thm:UBPH-K-general}. 

\begin{proof}[Proof of Theorem \ref{thm:UBPH-K-general}]
Since $R$ is $F$-finite, for any $e > 0$, the relative Frobenius
\[
F^e_{R/A}: R_{A^{1/p^e}} \rightarrow F^e_*(R)
\]
is a finite map. The generic fibers of $A^{1/p^e} \xrightarrow{\varphi_{A^{1/p^{e}}}} R_{A^{1/p^e}}$ are equidimensional since these fibers are purely inseparable extensions of the generic fibers of $A \rightarrow R$, and the latter are equidimensional by the hypotheses of Setting \ref{general-setting}. Consequently, the generic fibers of the composition $A^{1/p^{e}}\xrightarrow{\varphi_{A^{1/p^{e}}}} R_{A^{1/p^{e}}} \xrightarrow {\pi} (R_{A^{1/p^{e}}})_{\red}$ are also equidimensional, because these fibers are obtained by killing nilpotents of the corresponding generic fibers of $\varphi_{A^{1/p^e}}$. 

After inverting $0\neq f\in A$ and choosing $e_0\gg 0$ as in Lemma \ref{lem:making-fibers-geometrically-reduced} and the proof of Theorem \ref{prop:UBPH-K}, we may assume that
\begin{enumerate}
    \item $A$, hence $A^{1/p^{e_0}}$, are regular domains,\label{UBPH-K-general-1}
    \item the generic fibers of $A^{1/p^{e_0}} \xrightarrow{\pi \circ \varphi_{A^{1/p^{e_0}}}} (R_{A^{1/p^{e}}})_{\red}$ are geometrically reduced,
    \item $(F^{e_0}_*(R),\varphi_{A^{1/p^{e}}})$ satisfies UBPH-K with data $(0,0)$ (by Corollary \ref{reduced-lift:UBPH-K}),
    \item All the fibers of $\varphi_{A^{1/p^{e_0}}}: A^{1/p^{e_0}}\rightarrow R_{A^{1/p^{e_0}}}$ are equidimensional and $R_{A^{1/p^{e_0}}}$ is module-finite over a Noetherian normalization $A^{1/p^{e_0}}[t_1,\dots,t_\delta]$, where $\delta = \dim(R_{A^{1/p^{e_0}}}) - \dim(A^{1/p^{e_0}}) = \dim(R) - \dim(A).$\label{UBPH-K-general-2}
\end{enumerate} 
Observe that for any $\fp\in\Spec(A^{1/p^{e_0}})$ (here we choose $\fp \in \Spec(A^{1/p^{e_0}})$ and not in $\Spec(A)$), $d>0$, $x\in\Spec(R_{\kappa(\fp)^{1/p^{d}}})$ and $e>e_0$, letting
\[
\fP:=xR_{\kappa(\fp)^{1/p^d},x}
\]
be the maximal ideal of $(R_{A^{1/p^{e_0}}} \otimes_{A^{1/p^{e_0}}} \kappa(\fp)^{1/p^d})_x = (R \otimes_A \kappa(\fp)^{1/p^d})_x = R_{\kappa(\fp)^{1/p^d},x}$, one has

\begin{align*}
\ell_{R_{\kappa(\fp)^{1/p^d},x}}\left(\frac{R_{\kappa(\fp)^{1/p^d},x}}{\fP^{[p^e]}}\right)=\frac{\ell_{R_{\kappa(\fp)^{1/p^d},x}}\left(\left(F^{e_0}_{R_{\kappa(\fp)^{1/p^d},x}}\right)_*\left(\frac{R_{\kappa(\fp)^{1/p^d},x}}{\fP^{[p^e]}}\right)\right)}{[\kappa(x)^{1/p^{e_0}}:\kappa(x)]}\\
=\frac{\ell_{R_{\kappa(\fp)^{1/p^d},x}}\left(\frac{R_{\kappa(\fp)^{1/p^d},x}}{\fP^{[p^{e-e_0}]}}\otimes_{R_{\kappa(\fp)^{1/p^d},x}}\left(F^{e_0}_{R_{\kappa(\fp)^{1/p^d},x}}\right)_*\left(R_{\kappa(\fp)^{1/p^d},x}\right)\right)}{[\kappa(x)^{1/p^{e_0}}:\kappa(x)]}\\
=\frac{\ell_{R_{\kappa(\fp)^{1/p^d},x}}\left(\frac{R_{\kappa(\fp)^{1/p^d},x}}{\fP^{[p^{e-e_0}]}}\otimes_{R_{\kappa(\fp)^{1/p^d},x}}\left(F^{e_0}_*(R)\otimes_{A^{1/p^{e_0}}}\kappa(\fp)^{1/p^{d+e_0}}\right)_x\right)}{[\kappa(x)^{1/p^{e_0}}:\kappa(x)]}\\
=\frac{\ell_{R_{\kappa(\fp)^{1/p^d},x}}\left(\frac{R_{\kappa(\fp)^{1/p^d},x}}{\fP^{[p^{e-e_0}]}}\otimes_{R_{\kappa(\fp)^{1/p^d},x}}\left(F^{e_0}_*(R)\otimes_{A^{1/p^{e_0}}}\kappa(\fp)^{1/p^d}\right)^{\oplus [\kappa(\fp)^{1/p^{e_0}}:\kappa(\fp)]}_x\right)}{[\kappa(x)^{1/p^{e_0}}:\kappa(x)]}
\end{align*}
\begin{align*}
=\frac{[\kappa(\fp)^{1/p^{e_0}}:\kappa(\fp)]}{[\kappa(x)^{1/p^{e_0}}:\kappa(x)]}\cdot\ell_{R_{\kappa(\fp)^{1/p^d},x}}\left(\frac{R_{\kappa(\fp)^{1/p^d},x}}{\fP^{[p^{e-e_0}]}}\otimes_{R_{\kappa(\fp)^{1/p^d},x}}\left(F^{e_0}_*(R)\otimes_{A^{1/p^{e_0}}}\kappa(\fp)^{1/p^d}\right)_x\right)\\
=\left(\frac{p^{\dim(R_{\kappa(\fp)^{1/p^d},x})}}{p^\delta}\right)^{e_0}\cdot\ell_{R_{\kappa(\fp)^{1/p^d},x}}\left(\frac{R_{\kappa(\fp)^{1/p^d},x}}{\fP^{[p^{e-e_0}]}}\otimes_{R_{\kappa(\fp)^{1/p^d},x}}\left(F^{e_0}_*(R)\otimes_{A^{1/p^{e_0}}}\kappa(\fp)^{1/p^d}\right)_x\right)\\
= \left(\frac{p^{\dim(R_{\kappa(\fp)^{1/p^d},x})}}{p^\delta}\right)^{e_0}\cdot\ell_{R_{\kappa(\fp)^{1/p^d},x}}\left(\frac{R_{\kappa(\fp)^{1/p^d},x}}{\fP^{[p^{e-e_0}]}}\otimes_{R_{\kappa(\fp)^{1/p^d}}}\left(F^{e_0}_*(R)\otimes_{R_{A^{1/p^{e_0}}}}R_{\kappa(\fp)^{1/p^d}}\right)\right).
\end{align*} 
Here the first equality follows from (\ref{length-frob}) and the second equality follows from (\ref{bracket-tensor}). For the fourth equality, note that we have an isomorphism
$$F^{e_0}_*(R)\otimes_{A^{1/p^{e_0}}}\kappa(\fp)^{1/p^{e_0+d}}\cong \left(F^{e_0}_*(R)\otimes_{A^{1/p^{e_0}}}\kappa(\fp)^{1/p^d}\right)^{\oplus [\kappa(\fp)^{1/p^{e_0}}:\kappa(\fp)]}$$ which is linear over $F^{e_0}_*(R)\otimes_{A^{1/p^{e_0}}}\kappa(\fp)^{1/p^d}$, hence also over
\begin{align}
    R_{\kappa(\fp)^{1/p^d}} = R_{A^{1/p^{e_0}}} \otimes_{A^{1/p^{e_0}}} \kappa(\fp)^{1/p^d}\label{thm:UBPH-K-general-rel-frob}
\end{align}
by restriction of scalars via the map 
\[
F_{R/A}^{e_0} \otimes_{A^{1/p^{e_0}}} \kappa(\fp)^{1/p^d}: R_{A^{1/p^{e_0}}} \otimes_{A^{1/p^{e_0}}} \kappa(\fp)^{1/p^d} \rightarrow F^{e_0}_*(R)\otimes_{A^{1/p^{e_0}}}\kappa(\fp)^{1/p^d}.
\]
The penultimate equality follows by applying (\ref{field-ext-2}) to the map $A^{1/p^{e_0}} \xrightarrow{\varphi_{A^{1/p^{e_0}}}} R_{A^{1/{p^{e_0}}}}$ -- here we are using the fact that $\varphi_{A^{1/p^{e_0}}}$ satisfies all the nice properties listed in the beginning of the proof of this theorem in order for (\ref{field-ext-2}) to hold. The final equality follows from (\ref{thm:UBPH-K-general-rel-frob}).

Now let $\widetilde{C}$ be the constant obtained because the pair $(F^{e_0}_*(R), \varphi_{A^{1/p^{e_0}}})$ satisfies UBPH-K with data $(0,0)$, and let 
\[
\cG \coloneqq F^{e_0}_*(R)\otimes_{R_{A^{1/p^{e_0}}}}R_{\kappa(\fp)^{1/p^d}}.
\]
Thus, there exists $g \in A$ such that for any $\fp \in D(g) \subseteq \Spec(A^{1/p^{e_0}}), e > e_0, d > 0, x \in \Spec(R_{\kappa(\fp)^{1/p^d}})$, one has
\[
\left|\ehk\left(\cG_x\right)-\frac{\ell_{R_{\kappa(\fp)^{1/p^d},x}}\left(\frac{R_{\kappa(\fp)^{1/p^d},x}}{\fP^{[p^{e-e_0}]}}\otimes_{R_{\kappa(\fp)^{1/p^d}}} \cG\right)}{p^{(e-e_0)\dim\left(R_{\kappa(\fp)^{1/p^d},x}\right)}}\right|\leq \frac{\tilde{C}}{p^{(e-e_0)}}.
\]
It then follows that for any $e > e_0, d > 0$,
\begin{align*}
\left|\frac{\ell_{R_{\kappa(\fp)^{1/p^d},x}}(R_{\kappa(\fp)^{1/p^d},x}/\fP^{[p^e]})}{p^{e\dim( R_{\kappa(\fp)^{1/p^d},x})}} - \frac{\ell_{R_{\kappa(\fp)^{1/p^d},x}}(R_{\kappa(\fp)^{1/p^d},x}/\fP^{[p^{e+1}]})}{p^{(e+1)\dim( R_{\kappa(\fp)^{1/p^d},x})}} \right|\\
=\frac{1}{p^{e_0\delta}}\left|\frac{\ell_{R_{\kappa(\fp)^{1/p^d},x}}\left(\frac{R_{\kappa(\fp)^{1/p^d},x}}{\fP^{[p^{e-e_0}]}}\otimes_{R_{\kappa(\fp)^{1/p^d}}} \cG\right)}{p^{(e-e_0)\dim(R_{\kappa(\fp)^{1/p^d},x})}}-\frac{\ell_{R_{\kappa(\fp)^{1/p^d},x}}\left(\frac{R_{\kappa(\fp)^{1/p^d},x}}{\fP^{[p^{e-e_0+1}]}}\otimes_{R_{\kappa(\fp)^{1/p^d}}} \cG\right)}{p^{(e-e_0+1)\dim(R_{\kappa(\fp)^{1/p^d},x})}}\right|\\
\leq \frac{2\widetilde{C}}{p^{(e-e_0)} p^{e_0\delta}}.
\end{align*}

If $\tilde{\fp} \in \Spec(A)$ corresponds to $\fp \in \Spec(A^{1/p^{e_0}})$, then it is easy to see that 
\[
\kappa(\fp) = \kappa(\tilde{\fp})^{1/p^{e_0}}.
\]
Moreover, all subscripts still denote tensor over $A$. Now suppose that $d, e >e_0$. Then
\begin{align*}
    \left|\frac{\ell_{R_{\kappa(\tilde{\fp})^{1/p^d},x}}(R_{\kappa(\tilde{\fp})^{1/p^d},x}/\fP^{[p^e]})}{p^{e\dim( R_{\kappa(\tilde{\fp})^{1/p^d},x})}} - \frac{\ell_{R_{\kappa(\tilde{\fp})^{1/p^d},x}}(R_{\kappa(\tilde{\fp})^{1/p^d},x}/\fP^{[p^{e+1}]})}{p^{(e+1)\dim( R_{\kappa(\tilde{\fp})^{1/p^d},x})}} \right|\\
    =\left|\frac{\ell_{R_{\kappa(\fp)^{1/p^{d-e_0}},x}}(R_{\kappa(\fp)^{1/p^{d-e_0}},x}/\fP^{[p^e]})}{p^{e\dim( R_{\kappa(\fp)^{1/p^{d-e_0}},x})}} - \frac{\ell_{R_{\kappa(\fp)^{1/p^{d-e_0}},x}}(R_{\kappa(\fp)^{1/p^{d-e_0}},x}/\fP^{[p^{e+1}]})}{p^{(e+1)\dim( R_{\kappa(\fp)^{1/p^{d-e_0}},x})}} \right|\leq \frac{2\widetilde{C}}{p^{e-e_0} p^{e_0\delta}}.
\end{align*}
Taking $C:=\frac{4\widetilde{C}}{p^{e_0(\delta-1)}}$, Lemma \ref{ptconv} shows that $(R,\varphi)$ satisfies UBPH-K with data $(e_0,e_0)$.
\end{proof}

\begin{remark}
\label{rem:general-UBPH-K-modules}
The same proof that appears above also shows that if $\varphi$ is as in Setting \ref{general-setting} and $\cF$ is a finitely generated $R$-module, then $(\cF,\varphi)$ satisfies UBPH-K with data $(e_0,e_0)$. Indeed, choosing $e_0\gg 0$ and inverting $0\neq g\in A$ as in the proof of Theorem \ref{thm:UBPH-K-general} so that  (\ref{UBPH-K-general-1})-(\ref{UBPH-K-general-2}) are true, we have that $F^{e_0}_*(\cF)$ is a module over $F^{e_0}_*(R)$, hence also over $R_{A^{1/p^{e_0}}}$ and $(R_{A^{1/p^{e_0}}})_{\red}$. The rest of the proof goes through after simply replacing $F^{e_0}_*(R)$ with $F^{e_0}_*(\cF)$ everywhere.
\end{remark}

\section{Bertini theorems for Hilbert--Kunz multiplicity}\label{sec:Bertini}
In this section we prove Bertini theorems for Hilbert--Kunz multiplicity. Specifically, we show:

\begin{theorem}(c.f. \cite[Theorem 5.5]{CRST17})\label{ehk-bertini}
Let $k$ be an algebraically closed field of characteristic $p > 0$. Suppose $\psi: X \rightarrow \PP^n_k$ is a finite type morphism of $k$-schemes such that $X$ is equidimensional and $\psi$ induces separably generated residue field extensions (for example, if $\psi$ is a closed embedding). Fix a real number $\lambda\geq 1$. Then we have the following:
\begin{enumerate}
    \item If $\ehk(\cO_{X,x})<\lambda$ for all points $x\in X$, then for a general hyperplane $H$ of $\PP^n_k$, $$\ehk(\cO_{\psi^{-1}(H),y})<\lambda,$$ for all $y\in \psi^{-1}(H)$.
    \item If $\psi$ is a closed embedding, and $\ehk(X; \lambda)$ is the locus of $x \in X$ such that $\ehk(\mathcal{O}_{X,x}) < \lambda$, then for a general hyperplane $H$ of $\PP^n_k$, we have
    $$\ehk(X \cap H; \lambda) \supseteq \ehk(X; \lambda) \cap H.$$
    \item Suppose additionally that $k$ is uncountable, and that $\ehk(\cO_{X,x})\leq\lambda$ for all $x\in X$. Then for a very general hyperplane $H$ of $\PP^n_k$,
    $$\ehk(\cO_{\psi^{-1}(H),y})\leq\lambda$$ for all $y\in\psi^{-1}(H)$.
\end{enumerate}
\end{theorem}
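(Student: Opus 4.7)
The plan is to derive parts (1) and (2) from the Cumino--Greco--Manaresi axiomatic Bertini theorem \cite{CGM86} applied to the local property
$$\sP_{HK,\lambda}:\quad \ehk(\cO_{X,x}) < \lambda \text{ for all } x \in X.$$
Axiom \ref{axiom:A1} is recorded in Theorem \ref{KunzA1}, so everything comes down to verifying axiom \ref{axiom:A2}. Given a finite type $\phi: Y \to S$ with $Y$ excellent, $S$ integral with generic point $\eta$, and $Y_\eta$ geometrically $\sP_{HK,\lambda}$, I need an open neighborhood $\eta \in U \subseteq S$ over which every fiber still satisfies $\sP_{HK,\lambda}$. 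After reducing to $\varphi: A \to R$ affine with $A$ a domain, I would shrink $\Spec A$ to arrange $F$-finiteness (inherited from $k$, which is $F$-finite since it is algebraically closed of characteristic $p$) and spread equidimensionality of the generic fiber to a neighborhood via Corollary \ref{cor:spreading-out-equidim}. This places us in Setting \ref{general-setting}, so by Theorem \ref{thm:UBPH-K-general}, after possibly inverting a further $g \in A$, there exist $e_0$ and $C$ with
$$\left| \ehk(R_{\kappa(\fp)^{1/p^d}, x}) - \frac{\ell(R_{\kappa(\fp)^{1/p^d}, x}/\fP^{[p^e]})}{p^{e \dim R_{\kappa(\fp)^{1/p^d}, x}}} \right| \leq \frac{C}{p^e}$$
uniformly in $\fp \in \Spec A_g$, $d, e > e_0$, and $x \in \Spec R_{\kappa(\fp)^{1/p^d}}$.

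To finish axiom \ref{axiom:A2}, fix $d > e_0$ and then choose $e$ so large that $C/p^e$ is smaller than any chosen slack below $\lambda$. The hypothesis that $Y_\eta$ is geometrically $\sP_{HK,\lambda}$, applied with the purely inseparable extension $K^{1/p^d}/K$ (where $K = \kappa(\eta)$), yields $\ehk(R_{K^{1/p^d}, x}) < \lambda$ for every $x \in \Spec R_{K^{1/p^d}}$. Combining this with Smirnov's upper semi-continuity of $\ehk$ within each fiber \cite{Smi16} and the constructibility of fiberwise lengths (Proposition \ref{EGA-constructibility}, together with Proposition \ref{prop:equidim-constructible} to control dimensions of fiber components), I would argue that the set of $\fp \in \Spec A_g$ for which some $x$ in the level-$d$ fiber $R_{\kappa(\fp)^{1/p^d}}$ has $\ehk \geq \lambda$ is a constructible subset not containing $\eta$, hence contained in a proper closed subset, producing the required open $U$. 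Finally, for a point $x \in \Spec R_{\kappa(\fp)}$ with unique preimage $\tilde x \in \Spec R_{\kappa(\fp)^{1/p^d}}$ under the purely inseparable extension, Hanes's Theorem \ref{thm:HK-faithfully-flat-map} applied to the flat local map $R_{\kappa(\fp), x} \to R_{\kappa(\fp)^{1/p^d}, \tilde x}$ gives $\ehk(R_{\kappa(\fp), x}) \leq \ehk(R_{\kappa(\fp)^{1/p^d}, \tilde x}) < \lambda$, transferring the bound from level $d$ back to level $0$ and completing \ref{axiom:A2}.

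For part (2), Smirnov's upper semi-continuity \cite{Smi16} makes $\ehk(X;\lambda)$ open in $X$, and any nonempty open subscheme of an equidimensional scheme is itself equidimensional (its irreducible components are the nonempty intersections $X_i \cap U$ for irreducible components $X_i$ of $X$, each open and dense in $X_i$ and so of the same dimension). Applying part (1) to the composite $\ehk(X;\lambda) \hookrightarrow X \hookrightarrow \PP^n_k$ (the residue-field hypothesis being trivial for open immersions) directly yields the containment, because $\ehk(\cO_{\ehk(X;\lambda) \cap H, y}) = \ehk(\cO_{X \cap H, y})$ for every $y \in \ehk(X;\lambda) \cap H$. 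For part (3), a countable-intersection argument exploits uncountability of $k$: for each $n \geq 1$, the hypothesis $\ehk(\cO_{X,x}) \leq \lambda$ gives $\ehk(\cO_{X,x}) < \lambda + 1/n$, so by (1) there is a dense Zariski open subset $U_n$ of the parameter space of hyperplanes over which every fiber point satisfies $\ehk < \lambda + 1/n$. The very general hyperplane $H \in \bigcap_{n \geq 1} U_n$ then satisfies $\ehk(\cO_{\psi^{-1}(H), y}) \leq \lambda$ for every $y$.

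The main obstacle is the verification of \ref{axiom:A2} without normality or geometric reducedness on the fibers of $\varphi$: any naive attempt to spread $\ehk < \lambda$ from the generic fiber fails because $\ehk$ interacts delicately with nilpotents and with inseparable residue field extensions, so no clean semi-continuity statement on the total space is available. Theorem \ref{thm:UBPH-K-general} is engineered precisely to supply the missing uniformity across fiber points and across purely inseparable base changes of the residue field at the base; once this uniform estimate is in hand, the remaining ingredients (Smirnov's fiberwise semi-continuity and the constructibility results of Section \ref{subsec:Constructibility properties on the base}) combine routinely to furnish the desired open neighborhood.
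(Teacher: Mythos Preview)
Your overall strategy and your treatments of parts (2) and (3) match the paper's. The gap is in your verification of axiom \ref{axiom:A2-modified-body} for $\sP_{HK,\lambda}$.

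First, a smaller omission: you invoke Setting \ref{general-setting}, which requires the generic fiber of $\varphi$ to be equidimensional, but this is not part of the hypothesis of \ref{axiom:A2-modified-body}. The paper resolves this by noting that one only needs \ref{axiom:A2-modified-body} for the specific map $\rho: Y \to (\PP^n_k)^*$ in the CGM incidence diagram, and proves separately (Proposition \ref{prop:gen-fiber-equdimensional}, via Lemma \ref{lem:gen-hyperplanesec-equidimensional} and Lemma \ref{lem:generic-point-constructible}) that $\rho^{-1}(\eta)$ is equidimensional whenever $X$ is. You do not address this.

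The substantive gap is your spreading-out step. You assert that the set of $\fp$ for which some point of the level-$d$ fiber has $\ehk \geq \lambda$ is constructible, citing Proposition \ref{EGA-constructibility} for ``constructibility of fiberwise lengths''. But Proposition \ref{EGA-constructibility} concerns injectivity, surjectivity, or bijectivity of a \emph{fixed} morphism $u: \cF \to \cG$ of coherent sheaves restricted to fibers; it says nothing about the quantities $\ell(R_{\kappa(\fp)^{1/p^d},x}/\fP^{[p^e]})$, which involve the varying maximal ideal $\fP$ at a varying point $x$ of a varying fiber. No fixed $u$ has been exhibited, and Smirnov's semi-continuity only operates \emph{within} a single fiber, not across $\Spec(A)$. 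Producing the open neighborhood of $\eta$ is precisely the content of Theorem \ref{theorem:a2} together with Lemma \ref{min-gen-K-inf}: the paper passes to the perfect closure $K^{1/p^\infty}$, chooses for a suitable $e$ a \emph{specific} global surjection $R_{K^{1/p^\infty}}^{\oplus b_e} \twoheadrightarrow F^e_*(R_{K^{1/p^\infty}})$, descends it to a surjection over some $A_g^{1/p^{e+d_e}}$, and then base-changes to each $\kappa(\fp)^{1/p^d}$ to obtain the uniform generator bound $\mu_{R_{\kappa(\fp)^{1/p^d}}}(F^e_*(R_{\kappa(\fp)^{1/p^d}})) \leq [\kappa(\fp)^{1/p^e}:\kappa(\fp)]\, b_e$. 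Combined with the $\gamma$-computation of Theorem \ref{theorem:a2}(1), this bounds $\mu/p^{e\gamma}$ at every point of every fiber over $D(g)$ by the corresponding global quantity on $R_{K^{1/p^\infty}}$, after which UBPH-K and the triangle inequality finish exactly as you indicate. Your sketch has the right shape, but the mechanism that actually produces the open set---exhibiting and spreading a concrete surjection rather than appealing to an unspecified constructibility of lengths---is missing.
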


Our main tool will be the axiomatic framework developed in \cite{CGM86}. We recall the three axioms from \emph{loc. cit.} for a local property $\sP$ of locally Noetherian schemes.
\begin{enumerate}[label=(A{{\arabic*}})]
    \item Whenever $\phi:Y\rightarrow Z$ is a flat morphism with regular fibers and $Z$ is $\sP$ then $Y$ is $\sP$ too.\label{axiom:A1-body}
    \item Let $\phi:Y\rightarrow S$ be a finite type morphism where $Y$ is excellent and $S$ is integral with generic point $\eta$. If $Y_\eta$ is geometrically $\sP$, then there exists an open neighborhood $\eta\in U\subseteq S$ such that $Y_s$ is geometrically $\sP$ for each $s\in U$.\label{axiom:A2-body}
    \item $\sP$ is open on schemes of finite type over a field.\label{axiom:A3-body}
\end{enumerate}
    For the purpose of proving Bertini type theorems, the following weaker version of \ref{axiom:A2-body} is sufficient:
\begin{enumerate}[label=(A{{\arabic*}}$^\prime$)]
\setcounter{enumi}{1}
\item Let $\phi:Y\rightarrow S$ be a finite type morphism where $Y$ is excellent and $S$ is integral with generic point $\eta$. If $Y_\eta$ is geometrically $\sP$, then there exists an open neighborhood $\eta\in U\subseteq S$ such that $Y_s$ is $\sP$ for each $s\in U$.\label{axiom:A2-modified-body}
\end{enumerate}
In other words, $Y_s$ does not have to be \emph{geometrically} $\sP$ other than at the generic point of $S$.


The axiomatic framework yields Bertini type results for $\sP$ in the following sense:

\begin{theorem}\label{thm:cgm-main-thm}
Let $\psi:X\rightarrow\PP^n_k$ be a finite type $k$-morphism with separably generated residue field extensions, where $k$ is an algebraically closed field. Let $\sP$ be a local property of schemes.
\begin{enumerate}
    \item \cite[Theorem 1]{CGM86} Suppose $X$ has a local property $\sP$ satisfying \ref{axiom:A1-body} and \ref{axiom:A2-modified-body}. Then there exists a nonempty open subscheme $U$ of $(\PP^n_k)^*$ such that $\psi^{-1}(H)$ has property $\sP$ for each hyperplane $H\in U$.
    \item \cite[Corollary 2]{CGM86} Suppose $\psi$ is a closed embedding and $\sP$ satisfies axioms \ref{axiom:A1-body}, \ref{axiom:A2-modified-body} and \ref{axiom:A3-body}. If $\sP(X)$ denotes the locus of points of $X$ that satisfy $\sP$, then for a general hyperplane $H$ of $\PP^n_k$, $\sP(X \cap H) \supseteq \sP(X) \cap H$.
\end{enumerate}
\end{theorem}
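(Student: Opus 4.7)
The plan is to invoke the axiomatic Bertini framework of Theorem~\ref{thm:cgm-main-thm} applied to the local property $\sP_{HK,\lambda}$. Parts (1) and (2) of the theorem will follow once I verify axioms \ref{axiom:A1-body}, \ref{axiom:A2-modified-body}, and \ref{axiom:A3-body} for $\sP_{HK,\lambda}$; part (3) will then fall out of part (1) via a countable intersection argument exploiting the uncountability of $k$. Axioms \ref{axiom:A1-body} and \ref{axiom:A3-body} are essentially already at hand. Axiom \ref{axiom:A1-body} reduces, by checking stalks, to the statement that a flat local map with regular closed fiber preserves Hilbert--Kunz multiplicity, which is Kunz's Theorem~\ref{KunzA1}. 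Axiom \ref{axiom:A3-body} is Smirnov's upper semi-continuity of Hilbert--Kunz multiplicity \cite{Smi16} applied to schemes of finite type over a field.

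The substantive axiom is \ref{axiom:A2-modified-body}, where the uniform estimates of Section~\ref{sec:A Uniform Bound on Multiplicity of fibers} come into play. Let $\varphi : A \to R$ be of finite type with $A$ an excellent Noetherian domain of fraction field $K$ and $R_K$ geometrically $\sP_{HK,\lambda}$. I seek an open $U \subseteq \Spec(A)$ with $R_{\kappa(\fp)} \in \sP_{HK,\lambda}$ for $\fp \in U$. After treating the irreducible components of the generic fiber separately and spreading each out using Corollary~\ref{cor:spreading-out-equidim}, one may assume $\varphi$ is in Setting~\ref{general-setting}. Theorem~\ref{thm:UBPH-K-general} then produces $e_0 \geq 0$, $C > 0$, and $g \in A\setminus\{0\}$ such that, with $\fP$ denoting the maximal ideal of $R_{\kappa(\fp)^{1/p^d},x}$,
\[
\left| \ehk\bigl(R_{\kappa(\fp)^{1/p^d},x}\bigr) - \frac{\ell_{R_{\kappa(\fp)^{1/p^d},x}}\bigl(R_{\kappa(\fp)^{1/p^d},x}/\fP^{[p^e]}\bigr)}{p^{e\dim(R_{\kappa(\fp)^{1/p^d},x})}} \right| \leq \frac{C}{p^e}
\]
for all $\fp \in D(g)$, $d,e > e_0$, and $x \in \Spec(R_{\kappa(\fp)^{1/p^d}})$. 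Fix any $d > e_0$. Since $R_{K^{1/p^d}}$ is geometrically $\sP_{HK,\lambda}$ and Noetherian, Smirnov's upper semi-continuity forces $\ehk \leq \mu$ on $\Spec(R_{K^{1/p^d}})$ for some $\mu < \lambda$. Choose $e > e_0$ with $2C/p^e < \lambda - \mu$; then at every point of the generic fiber the stage-$e$ normalized length is strictly below $\lambda - C/p^e$. The locus in $\Spec(R_{A^{1/p^d}})$ where this stage-$e$ function lies below $\lambda - C/p^e$ is open by an upper semi-continuity argument on the relevant coherent sheaves in the spirit of Proposition~\ref{EGA-constructibility}, and it contains the entire generic fiber, hence also the preimage of some nonempty Zariski open $V \subseteq \Spec(A^{1/p^d})$. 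Because $\Spec(A^{1/p^d}) \to \Spec(A)$ is a homeomorphism, the image of $V$ yields the required $U \subseteq \Spec(A)$; on $U \cap D(g)$ the uniform bound upgrades the stage-$e$ inequality to $\ehk < \lambda$ at every fiber point, establishing \ref{axiom:A2-modified-body}.

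For part (3), the hypothesis $\ehk(\mathcal{O}_{X,x}) \leq \lambda$ gives $\ehk(\mathcal{O}_{X,x}) < \lambda + 1/n$ for every integer $n \geq 1$, so part (1) applied with threshold $\lambda + 1/n$ produces a nonempty Zariski open $U_n \subseteq (\PP^n_k)^*$ such that every $H \in U_n$ satisfies $\ehk(\mathcal{O}_{\psi^{-1}(H),y}) < \lambda + 1/n$ throughout $\psi^{-1}(H)$. Uncountability and algebraic closedness of $k$ guarantee that $\bigcap_{n \geq 1} U_n$ contains a $k$-rational hyperplane, and any such $H$ satisfies $\ehk(\mathcal{O}_{\psi^{-1}(H),y}) \leq \lambda$, proving the very-general statement. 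The principal obstacle is the spreading-out step inside \ref{axiom:A2-modified-body}: Theorem~\ref{thm:UBPH-K-general} only yields pointwise control across fibers, and converting this into an actual Zariski open subset of the base demands upper semi-continuity of the stage-$e$ normalized length on the total space $\Spec(R_{A^{1/p^d}})$, together with care in descending openness along the purely inseparable extension $A \to A^{1/p^d}$.
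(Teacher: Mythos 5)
Your proposal does not prove the statement in question; it proves a different theorem. The statement is the abstract axiomatic framework of Cumino--Greco--Manaresi: for an \emph{arbitrary} local property $\sP$ satisfying \ref{axiom:A1-body} and \ref{axiom:A2-modified-body}, a general hyperplane section of $X$ has $\sP$ (and the refinement in part (2) under \ref{axiom:A3-body}). Your argument instead verifies the axioms for the \emph{specific} property $\sP_{HK,\lambda}$ and then ``invokes the axiomatic Bertini framework of Theorem \ref{thm:cgm-main-thm}'' to conclude --- that is, you assume the very statement you were asked to prove and use it as a black box to derive Theorem \ref{ehk-bertini}. The tell is that you discuss a ``part (3)'' about uncountable fields and very general hyperplanes; Theorem \ref{thm:cgm-main-thm} has no part (3), but Theorem \ref{ehk-bertini} does. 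Nothing in your proposal engages with what an actual proof of Theorem \ref{thm:cgm-main-thm} requires: the incidence correspondence $Z \subseteq \PP^n_k \times_k (\PP^n_k)^*$, the fiber product $Y = X \times_{\PP^n_k} Z$ with its projections $\rho: Y \to (\PP^n_k)^*$ and $\gamma: Y \to X$, the observation that $Y_\eta \otimes_{\kappa(\eta)} L \to X$ has regular fibers (using separably generated residue field extensions and $k = \overline{k}$) so that \ref{axiom:A1-body} makes the generic fiber of $\rho$ geometrically $\sP$, and finally \ref{axiom:A2-modified-body} applied to $\rho$ to spread $\sP$ out to general closed fibers, i.e.\ to $\psi^{-1}(H)$. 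For the record, the paper itself does not reprove this result either --- it cites \cite[Theorem 1 and Corollary 2]{CGM86} and only supplies a remark (expanded in Discussion \ref{cgm3}) explaining why the weakened axiom \ref{axiom:A2-modified-body} suffices in the CGM argument, namely because one only needs the hyperplane sections to be $\sP$ rather than geometrically $\sP$. A blind proof attempt should have either reproduced the CGM construction or cited it with that justification.

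A secondary point, in case you recycle this material where it actually belongs (the proof of Theorem \ref{ehk-bertini} and of axiom \ref{axiom:A2-modified-body} for $\sP_{HK,\lambda}$): your step ``Smirnov's upper semi-continuity forces $\ehk \leq \mu$ on $\Spec(R_{K^{1/p^d}})$ for some $\mu < \lambda$'' is not justified as stated. Upper semi-continuity alone does not give a uniform bound $\sup_x \ehk < \lambda$ away from $\lambda$ unless the supremum is attained; the paper circumvents this by working with the global Hilbert--Kunz multiplicity $\ehk(R_{K^{1/p^\infty}})$ of De Stefani--Polstra--Yao (Definition-Theorem \ref{defthm:global-HK}), which \emph{equals} the supremum of the local multiplicities and is controlled via minimal numbers of generators of $F^e_*$ (Lemma \ref{min-gen-K-inf} and Theorem \ref{theorem:a2}). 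Your proposed openness argument on $\Spec(R_{A^{1/p^d}})$ for the stage-$e$ normalized length is also not what the paper does; the paper spreads out the bound on $\mu(F^{e}_*(-))$ from the generic fiber by lifting a surjection $R_{K^{1/p^\infty}}^{\oplus b_e} \twoheadrightarrow F^e_*(R_{K^{1/p^\infty}})$ over an open subset of the base.
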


\begin{remark}
\cite[Theorem 1 and Corollary 2]{CGM86} assume that $\sP$ satisfies the stronger axiom \ref{axiom:A2-body} instead of \ref{axiom:A2-modified-body}. However, the proofs of the aforementioned results reveal that \ref{axiom:A2-modified-body} is sufficient, because we only seek for the hyperplane sections to be $\sP$ and not geometrically $\sP$; see Discussion \ref{cgm3} for more details.
\end{remark}

Fixing a real number $\lambda \geq 1$, we will apply Theorem \ref{thm:cgm-main-thm} to the following property of a Noetherian local ring $R$:
\begin{equation}\label{eq:whats-P}
\textrm{$\sP_{HK, \lambda} \coloneqq \ehk(R)<\lambda$.}
\end{equation}
\begin{definition}\label{def:scheme-phk}
When we say a locally Noetherian scheme \emph{$X$ is $\sP_{HK, \lambda}$}, we mean all local rings of $X$ satisfy $\sP_{HK, \lambda}$. Similarly, when we say $\sP_{HK,\lambda}$ is \emph{open on $X$}, we mean the locus of points of $X$ whose local rings satisfy $\sP_{HK, \lambda}$ is open. If $X$ is locally of finite type over a field $k$ of prime characteristic, we say $X$ is \emph{geometrically $\sP_{HK,\lambda}$} if for all field extensions $K$ of $k$, $X_K$ is $\sP_{HK,\lambda}$.
\end{definition}


A result of Kunz immediately implies \ref{axiom:A1-body} for property (\ref{eq:whats-P}):

\begin{theorem}\cite[Theorem 3.9]{Kun76}\label{KunzA1}
Let $(A,\fm)\hookrightarrow (R,\fn)$ be a flat local extension of rings of prime characteristic $p>0$. If the closed fiber $R/\fm R$ is a regular local ring, then
$$\frac{\ell_A(A/\fm^{[p^e]})}{p^{e\dim(A)}}=\frac{\ell_R(R/\fn^{[p^e]})}{p^{e\dim(R)}}$$ for all $e>0$. In particular, $\ehk(A)=\ehk(R)$.
\end{theorem}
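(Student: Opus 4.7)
Set $d = \dim(A)$ and $s = \dim(R/\fm R)$, so that by flatness of $A \hookrightarrow R$ we have $\dim(R) = d + s$. The strategy is to establish the equality of lengths
\[
\ell_R(R/\fn^{[p^e]}) = \ell_A(A/\fm^{[p^e]}) \cdot p^{es}
\]
for every $e > 0$, from which the claimed formula follows immediately upon dividing by $p^{e(d+s)}$, and then passing to the limit yields $\ehk(A) = \ehk(R)$.

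First, because $R/\fm R$ is regular of dimension $s$, choose a regular system of parameters $\ol{x_1}, \dots, \ol{x_s}$ of $R/\fm R$ and lift to $x_1, \dots, x_s \in \fn$, so $\fn = \fm R + (x_1, \dots, x_s)$ and hence $\fn^{[p^e]} = \fm^{[p^e]} R + (x_1^{p^e}, \dots, x_s^{p^e})$. Next, I will invoke the local criterion of flatness: since $A \to R$ is flat local and $\ol{x_1}, \dots, \ol{x_s}$ form a regular sequence in the closed fiber, $x_1, \dots, x_s$ is a regular sequence in $R$ and $R/(x_1, \dots, x_s)$ is $A$-flat; the same holds for the powers $x_1^{p^e}, \dots, x_s^{p^e}$, yielding flatness of $R/(x_1^{p^e}, \dots, x_s^{p^e})$ over $A$.

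Now take a composition series $0 = I_0 \subsetneq I_1 \subsetneq \cdots \subsetneq I_n = A/\fm^{[p^e]}$ of the $A$-module $A/\fm^{[p^e]}$, where $n = \ell_A(A/\fm^{[p^e]})$ and each subquotient is $\cong A/\fm$. Tensoring over $A$ with the flat $A$-module $R/(x_1^{p^e}, \dots, x_s^{p^e})$ preserves exactness of each short exact sequence $0 \to I_{j-1} \to I_j \to A/\fm \to 0$, producing a filtration of
\[
R/(x_1^{p^e}, \dots, x_s^{p^e}) \otimes_A A/\fm^{[p^e]} \;=\; R/\fn^{[p^e]}
\]
whose successive subquotients are all isomorphic to the $R$-module
\[
R/(x_1^{p^e}, \dots, x_s^{p^e}) \otimes_A A/\fm \;=\; (R/\fm R)/(\ol{x_1}^{p^e}, \dots, \ol{x_s}^{p^e}).
\]
Because $R/\fm R$ is a regular local ring of dimension $s$ with regular system of parameters $\ol{x_1}, \dots, \ol{x_s}$, one has $(\fn/\fm R)^{[p^e]} = (\ol{x_1}^{p^e}, \dots, \ol{x_s}^{p^e})$, and the standard length computation in a regular local ring (e.g.\ via the Koszul complex on a regular sequence) gives $\ell_{R/\fm R}\big((R/\fm R)/(\ol{x_1}^{p^e}, \dots, \ol{x_s}^{p^e})\big) = p^{es}$. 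Summing contributions over the $n$ subquotients and using additivity of length yields the desired identity $\ell_R(R/\fn^{[p^e]}) = n \cdot p^{es} = \ell_A(A/\fm^{[p^e]}) \cdot p^{es}$.

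The main technical step is the flatness of $R/(x_1^{p^e}, \dots, x_s^{p^e})$ over $A$ via the local criterion; once this is in hand, the filtration argument and the regular-local-ring length computation are routine. Passing to the limit $e \to \infty$ in the resulting equality of normalized lengths yields $\ehk(A) = \ehk(R)$.
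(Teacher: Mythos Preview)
The paper does not supply its own proof of this statement; it is quoted directly from \cite[Theorem 3.9]{Kun76} and used as a black box. Your argument is correct and is essentially the standard proof: lift a regular system of parameters from the closed fiber, use the local criterion of flatness to see that $R/(x_1^{p^e},\dots,x_s^{p^e})$ remains $A$-flat, and then tensor a composition series of $A/\fm^{[p^e]}$ to reduce to the length computation $\ell\big((R/\fm R)/(\ol{x_1}^{p^e},\dots,\ol{x_s}^{p^e})\big)=p^{es}$ in the regular fiber. One small point worth making explicit is that the rings are implicitly Noetherian (so that lengths are finite and the regular-sequence arguments in the fiber apply to powers), but this is automatic in the Hilbert--Kunz context and in Kunz's original setting.
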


Axiom \ref{axiom:A3-body} follows from the following semi-continuity result of Smirnov:

\begin{theorem}\cite[Corollary 24]{Smi16}
\label{thm:semi-continuity-HK}
Let $R$ be a locally equidimensional ring. Moreover, suppose that $R$ is either $F$-finite or essentially of finite type over an excellent local ring. Then the function
\[
\ehk: \Spec(R) \rightarrow \mathbb{R}
\]
is upper semi-continuous.
\end{theorem}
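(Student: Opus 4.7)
The plan is to realize $\ehk$ as a uniform limit of upper semi-continuous functions on $\Spec(R)$, since uniform limits of upper semi-continuous functions are upper semi-continuous. This strategy avoids having to analyze the asymptotic Hilbert--Kunz invariant directly and instead leverages semi-continuity of the finite-level quantities $\mu_{R_\fp}(F^e_* R_\fp)$.

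First I would reduce to the $F$-finite case. If $R$ is only essentially of finite type over an excellent local ring, one uses a $\Gamma$-construction (a faithfully flat purely inseparable extension that makes the ring $F$-finite) combined with Theorem \ref{thm:HK-faithfully-flat-map} and Kunz's Theorem \ref{KunzA1} to show the extension preserves Hilbert--Kunz multiplicities of local rings; since upper semi-continuity can be checked after faithfully flat base change whose fibers are regular, this reduction is harmless. Once $R$ is $F$-finite and locally equidimensional, I may work one connected component at a time and (by passing to $R/\fq$ for each $\fq \in \Min(R)$ if needed) treat each irreducible component separately, since $\ehk(R_\fp) = \max_{\fq \subseteq \fp, \fq \in \Min(R)} \ehk((R/\fq)_\fp)$ when $R$ is locally equidimensional. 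On each such component, Proposition \ref{prop:p-degree}(2) guarantees that the function $\fp \mapsto [\kappa(\fp)^{1/p^e}:\kappa(\fp)]\, p^{e\dim(R_\fp)}$ is \emph{constant}, with value $p^{e\gamma(R)}$, so we have a single uniform denominator.

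Next, by Lemma \ref{lem:gamma-local-HK} (applied locally at each $\fp$), we have
\[
\ehk(R_\fp) \;=\; \lim_{e \to \infty} \frac{\mu_{R_\fp}\bigl(F^e_* R_\fp\bigr)}{p^{e\gamma(R)}}.
\]
For each fixed $e$, the numerator $\fp \mapsto \mu_{R_\fp}((F^e_*R)_\fp)$ is upper semi-continuous on $\Spec(R)$: this is the standard fact that, for any finitely generated module $M$ over a Noetherian ring, the fiber dimension $\fp \mapsto \dim_{\kappa(\fp)}(M \otimes \kappa(\fp))$ is upper semi-continuous. Dividing by the constant $p^{e\gamma(R)}$ preserves upper semi-continuity, so each member of the sequence is upper semi-continuous. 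The remaining and main point is to establish \emph{uniform} convergence: there exist constants $C > 0$ and $e_0 \geq 0$ such that
\[
\left|\, \ehk(R_\fp) - \frac{\mu_{R_\fp}(F^e_* R_\fp)}{p^{e\gamma(R)}} \,\right| \;\leq\; \frac{C}{p^e}
\]
for all $e \geq e_0$ and \emph{all} $\fp \in \Spec(R)$. Once this is in hand, upper semi-continuity of $\ehk$ follows immediately from the fact that a uniform limit of upper semi-continuous functions is upper semi-continuous.

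The hard part is the uniform estimate. I would prove it along the lines of Tucker's argument for the existence of $F$-signature, but made to vary over $\Spec(R)$ rather than along Frobenius iterations at a fixed point. Concretely, when $R$ is reduced and locally equidimensional $F$-finite, Lemma \ref{exact-sequences} (adapted to compare $F_*R$ with a suitable free module over $R$ of rank $p^{\gamma(R)}$ after localizing at the total ring of fractions) produces maps with cokernels of strictly smaller dimension annihilated by a single regular element $c$; then one invokes a Polstra--Tucker-type length estimate as in Theorem \ref{pty} to control those cokernels by $\widetilde{C}\, p^{e(\dim R_\fp - 1)}$ uniformly in $\fp$. Telescoping this comparison between consecutive $e$ and applying Lemma \ref{ptconv} produces the required constant $C$ independent of $\fp$. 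The non-reduced case is handled by twisting: choose $e_0$ large enough that $F^{e_0}_*R$ is a module over $R_{\red}$ (as in the proof of Theorem \ref{thm:UBPH-K-general}), and apply the reduced case to $R_{\red}$ with the module $F^{e_0}_* R$. The chief obstacle is ensuring that the constant $C$ genuinely does not depend on the prime $\fp$ --- this is exactly what local equidimensionality (via Proposition \ref{prop:p-degree}(2)) secures, by keeping the normalizing denominator $p^{e\gamma(R)}$ the same at every point.
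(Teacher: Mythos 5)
This theorem is not proved in the paper at all --- it is quoted directly from Smirnov \cite[Corollary 24]{Smi16} --- so the only meaningful comparison is with Smirnov's own argument, which your proposal reconstructs faithfully: write $\ehk(R_\fp)$ as the limit of the upper semi-continuous functions $\fp \mapsto \mu_{R_\fp}(F^e_*R_\fp)/p^{e\gamma(R)}$, prove the convergence is uniform in $\fp$ via a Polstra--Tucker-type estimate (exact sequences whose cokernels have smaller dimension and are killed by a single nonzerodivisor, a telescoping comparison of consecutive $e$, and Lemma \ref{ptconv}), handle nilpotents by twisting so that $F^{e_0}_*R$ is an $R_{\red}$-module, and reduce the essentially-of-finite-type case to the $F$-finite case by a $\Gamma$-construction (where one must choose a single cofinite $\Gamma$ that preserves $\ehk$ at every prime simultaneously, and use that $\Spec(R^\Gamma)\to\Spec(R)$ is a homeomorphism). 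Local equidimensionality enters exactly where you say it does, through Proposition \ref{prop:p-degree}(2), which makes the normalizing denominator constant on each connected component.

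One intermediate claim is false, though it sits in a step you yourself flag as optional: the associativity formula for Hilbert--Kunz multiplicity is a \emph{weighted sum}, not a maximum. When $R_\fp$ is reduced and equidimensional one has
\[
\ehk(R_\fp) \;=\; \sum_{\substack{\fq \in \Min(R) \\ \fq \subseteq \fp}} \ell_{R_\fq}(R_\fq)\,\ehk\big((R/\fq)_\fp\big),
\]
and already for the node $\F_p[[x,y]]/(xy)$ your formula gives $1$ while $\ehk = 2$. The reduction to irreducible components can still be salvaged with the correct formula (each summand is upper semi-continuous, being supported on the closed set $V(\fq)$), but it is in any case unnecessary: for the minimal primes $\fq$ one has $\dim(R_\fq)=0$, so Proposition \ref{prop:p-degree}(2) already forces $[\kappa(\fq)^{1/p}:\kappa(\fq)]$ to be constant on each connected component of the locally equidimensional ring $R$, which is all that is needed for $\gamma(R_\fp)$ to be a single constant in the denominator and for $(F_*R)_\fq$ to have the right rank $p^{\gamma}$ at every minimal prime when invoking Lemma \ref{exact-sequences}.
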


Thus for an equidimensional finite type scheme over a field (the only setting \ref{axiom:A3-body} is applied in), we get:

\begin{corollary}
\label{cor:A3-equidim-schemes}
Let $k$ be a field of prime characteristic $p > 0$ and $X$ be an equidimensional scheme of finite type over $k$. Then for a fixed $\lambda \geq 1$, the set $\ehk(X;\lambda)$ of $x \in X$ such that $\ehk(\mathcal{O}_{X,x}) < \lambda$ is open in $X$.
\end{corollary}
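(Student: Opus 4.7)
The plan is to reduce directly to Smirnov's upper semi-continuity theorem (Theorem \ref{thm:semi-continuity-HK}). Since openness can be checked on any affine open cover of $X$, it suffices to verify the claim on each affine open $\Spec(R) \subseteq X$. Every such $R$ is of finite type, hence essentially of finite type, over the field $k$, which is trivially an excellent local ring, so the ``essentially of finite type over an excellent local ring'' hypothesis of Theorem \ref{thm:semi-continuity-HK} is automatic.

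The only nontrivial point is verifying that $R$ is locally equidimensional. Here is where the chain of remarks recorded in Remark \ref{rem:biequidim} does all the work. Since $X$ is equidimensional and of finite type over $k$, Remark \ref{rem:biequidim}(2) tells us that $X$ is in fact biequidimensional (this uses the fact that finite type schemes over a field are catenary with equicodimensional irreducible components). Then Remark \ref{rem:biequidim}(3) upgrades this to say that $X$ is locally biequidimensional, and in particular locally equidimensional. Consequently, each affine open $\Spec(R) \subseteq X$ is locally equidimensional in the sense of Definition \ref{def:variants-equidim}(2).

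With both hypotheses of Theorem \ref{thm:semi-continuity-HK} now in place on $\Spec(R)$, we conclude that the function $\ehk \colon \Spec(R) \to \mathbb{R}$ is upper semi-continuous, so the sublevel set
\[
\{x \in \Spec(R) : \ehk(\mathcal{O}_{X,x}) < \lambda\} = \ehk(X;\lambda) \cap \Spec(R)
\]
is open in $\Spec(R)$. As this holds for every affine open in a cover of $X$, the full locus $\ehk(X;\lambda)$ is open in $X$.

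There is no real obstacle in this proof, since it is essentially a matter of verifying that the hypotheses of Smirnov's theorem apply. The only subtlety is that Smirnov requires local (rather than merely global) equidimensionality, and it is precisely here that one must invoke the biequidimensionality chain of implications for finite type schemes over a field from Remark \ref{rem:biequidim}; without passing through biequidimensionality, ``equidimensional'' alone would not be enough to guarantee the equidimensionality of the local rings $\mathcal{O}_{X,x}$.
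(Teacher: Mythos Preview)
Your proof is correct and follows essentially the same approach as the paper: reduce to the affine case, use Remark \ref{rem:biequidim}(2) and (3) to pass from equidimensional to biequidimensional to locally equidimensional, and then apply Smirnov's upper semi-continuity theorem (Theorem \ref{thm:semi-continuity-HK}) with $k$ serving as the excellent local base ring. If anything, your citation of both parts (2) and (3) of Remark \ref{rem:biequidim} is slightly more precise than the paper's, which only cites (2).
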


\begin{proof}
Since $X$ is equidimensional and finite type over a field, $X$ is biequidimensional and hence locally equidimensional (Remark \ref{rem:biequidim}(2)). As the question is local on $X$, we may assume $X$ is affine, say $X= \Spec(A)$. Then $A$ is locally equidimensional and of finite type over an excellent local ring (namely $k$), and so, Theorem \ref{thm:semi-continuity-HK} implies that $\ehk(\Spec(A);\lambda)$ is open.
\end{proof}

The proof of \ref{axiom:A2-modified-body} for (\ref{eq:whats-P}) takes more work, and will be the topic of the next subsection.

\subsection{A local version of \ref{axiom:A2-modified-body} for Hilbert--Kunz multiplicity}
In this subsection we will prove a local version of \ref{axiom:A2-modified-body} for the property $\sP_{HK,\lambda}$  defined in (\ref{eq:whats-P}) (see Theorem \ref{theorem:a2}) using the uniformity results from Section \ref{sec:A Uniform Bound on Multiplicity of fibers}. But first, we need a preliminary lemma.

\begin{lemma}\label{min-gen-K-inf}
Let $A\hookrightarrow R$ be a flat finite type map of $F$-finite rings of prime characteristic $p > 0$, such that $A$ is a domain with fraction field $K$. For a fixed $e > 0$, suppose that there is a surjective $R_{K^{1/p^\infty}}$-linear map
\begin{align}\label{min-gen-K-inf1}
    \varphi: R_{K^{1/p^\infty}}^{\oplus b_e}\twoheadrightarrow F^e_*(R_{K^{1/p^\infty}})
\end{align}
for some $b_e >0$. Then there exists $d_e>0$, $0\neq g\in A$, and a surjective $R_{A_g^{1/p^{e+d_e}}}$-linear map
\begin{align}
    R_{A_g^{1/p^{e+d_e}}}^{\oplus b_e}\rightarrow F^e_* (R_g) \otimes_{A_g^{1/p^e}} A_g^{1/p^{e+d_e}}
\end{align}
which tensors with $\otimes_{A_g^{1/p^{e+d_e}}} K^{1/p^\infty}$ to recover (\ref{min-gen-K-inf1}). Moreover, the same property holds for all $d>d_e$.
\end{lemma}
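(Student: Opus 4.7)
The plan is that $\varphi$ is determined by the finitely many images $y_1,\dots,y_{b_e}$ of the standard basis vectors, and these images can be spread out from the infinite tower $K^{1/p^\infty}$ down to a finite stage $K^{1/p^{e+d_e}}$, and then all the way to $A_g^{1/p^{e+d_e}}$ after inverting a single nonzero $g\in A$. Surjectivity of the resulting spread-out map will then follow from a standard torsion argument on the cokernel.

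In detail, identify $F^e_*(R_{K^{1/p^\infty}})=F^e_*(R)\otimes_{A^{1/p^e}}K^{1/p^\infty}$ via the relative Frobenius. Since $F^e_*$ commutes with filtered colimits and $K^{1/p^\infty}=\varinjlim_d K^{1/p^{e+d}}$, there exists $d_e>0$ such that each $y_i$ lies in the image of $F^e_*(R)\otimes_{A^{1/p^e}}K^{1/p^{e+d_e}}$. Observing that $K^{1/p^{e+d_e}}=\Frac(A^{1/p^{e+d_e}})$ is the localization of $A^{1/p^{e+d_e}}$ at $A\setminus\{0\}$ (since every nonzero element of $A^{1/p^{e+d_e}}$ has its $p^{e+d_e}$-th power in $A$), inverting a single nonzero $g\in A$ lifts the $y_i$ to elements $\tilde{y}_i\in F^e_*(R_g)\otimes_{A_g^{1/p^e}}A_g^{1/p^{e+d_e}}$. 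Define $\phi_g$ as the $R_{A_g^{1/p^{e+d_e}}}$-linear map sending the $i$-th standard basis vector to $\tilde{y}_i$; by construction $\phi_g\otimes_{A_g^{1/p^{e+d_e}}}K^{1/p^\infty}=\varphi$.

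Let $C\coloneqq\coker(\phi_g)$. Since $R$ is $F$-finite, $F^e_*(R_g)$ is a finitely generated $R_g$-module, so the target of $\phi_g$, and hence $C$, is finitely generated over $R_{A_g^{1/p^{e+d_e}}}$. By construction $C\otimes_{A_g^{1/p^{e+d_e}}}K^{1/p^\infty}=\coker(\varphi)=0$, and since $K^{1/p^\infty}$ is faithfully flat over $K^{1/p^{e+d_e}}$, also $C\otimes_{A_g^{1/p^{e+d_e}}}K^{1/p^{e+d_e}}=0$. Thus $C$ is torsion as an $A_g^{1/p^{e+d_e}}$-module, and multiplying the annihilators of finitely many generators of $C$ over $R_{A_g^{1/p^{e+d_e}}}$ yields $0\ne a\in A_g^{1/p^{e+d_e}}$ with $aC=0$. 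Then $a^{p^{e+d_e}}\in A_g$ also annihilates $C$, so replacing $g$ by $g\cdot a^{p^{e+d_e}}$ makes $\phi_g$ surjective. For any $d>d_e$, base-changing this surjection along the flat extension $A_g^{1/p^{e+d_e}}\hookrightarrow A_g^{1/p^{e+d}}$ yields the desired surjection at level $d$.

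The main technical obstacle is the careful bookkeeping of base-change identifications involving $F^e_*$ and the relative Frobenius, in particular verifying that $F^e_*(R_C)$ agrees with $F^e_*(R)\otimes_{A^{1/p^e}}C^{1/p^e}$ as an $R_C$-module for the various $A$-algebras $C$ appearing above, so that the identity $\phi_g\otimes_{A_g^{1/p^{e+d_e}}}K^{1/p^\infty}=\varphi$ really makes sense. Once these identifications are in place, the spreading-out is a standard colimit argument.
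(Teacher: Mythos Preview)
Your proof is correct and follows essentially the same approach as the paper: spread out the images of the standard basis vectors from $K^{1/p^\infty}$ to a finite level $K^{1/p^{e+d_e}}$ via the colimit description, clear denominators to descend to $A_g^{1/p^{e+d_e}}$, and then check surjectivity. Your cokernel/torsion argument simply makes explicit what the paper summarizes as ``it is then easy to check''; the paper instead first verifies surjectivity at the level of $K^{1/p^{e+d_e}}$ by faithfully flat descent from $K^{1/p^\infty}$ and then passes to $A_g^{1/p^{e+d_e}}$.

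One small point: in your last sentence you call $A_g^{1/p^{e+d_e}}\hookrightarrow A_g^{1/p^{e+d}}$ a \emph{flat} extension, but the lemma does not assume $A$ is regular, so this map need not be flat (by Kunz's theorem). Fortunately flatness is irrelevant here---right exactness of the tensor product already preserves surjectivity, which is all you need, and this is exactly how the paper handles the step for $d>d_e$.
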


\begin{proof}
We have $F^e_*(R_{K^{1/p^\infty}}) = F^e_*R \otimes_{A^{1/p^e}} K^{1/p^\infty}$ (here we use the fact that $K^{1/p^\infty}$ is perfect). Let $f_1,\dots,f_{b_e}$ be the standard idempotents of $R_{K^{1/p^\infty}}^{\oplus b_e}$. Since 
\[
F^e_*(R) \otimes_{A^{1/p^e}} K^{1/p^\infty} = \bigcup_{d > 0} F^e_*(R) \otimes_{A^{1/p^e}} K^{1/p^{e+d}},
\]
there exists $d_e > 0$ such that $\varphi(f_1),\dots,\varphi(f_{b_e}) \in F^e_*(R) \otimes_{A^{1/p^e}} K^{1/p^{e+d_e}}$. Consider the $R_{K^{1/p^{e+d_e}}}$-linear (hence also $K^{1/p^{e + d_e}}$-linear) map 
\[
\tilde{\varphi}: R^{\oplus b_e}_{K^{1/p^{e + d_e}}} \rightarrow F^e_*(R) \otimes_{A^{1/p^e}} K^{1/p^{e + d_e}}
\]
that sends $f_i \mapsto \varphi(f_i)$. Since
\[
\varphi = \tilde{\varphi} \otimes_{K^{1/p^{e+d_e}}} K^{1/p^{\infty}},
\]
it follows that $\tilde{\varphi}$ is also surjective by faithfully flat base change. As $K^{1/p^{e+d_e}}$ is the fraction field of $A^{1/p^{e + d_e}}$, it is then easy to check that there exists $g \in A$ such that restricting $\tilde{\varphi}$ to $R^{\oplus b_e}_{A_g^{1/p^{e + d_e}}}$, the images of $f_1, \dots, f_{d_e}$ all lie in some $F^e_R(R) \otimes_{A^{1/p^e}} A_g^{1/p^{e + d_e}} = F^e_*(R_g) \otimes_{A^{1/p^e}_g} A_g^{1/p^{e + d_e}}$ and the induced map of $R_{A^{1/p^{e+d_e}}_g}$-modules
\begin{equation}
\label{eq:right-exact}
R^{\oplus d_e}_{A^{1/p^{e+d_e}}_g} \rightarrow F^e_*(R) \otimes_{A^{1/p^e}_g} A_g^{1/p^{e + d_e}},
\end{equation}
is surjective. Then (\ref{eq:right-exact}) recovers $\varphi$ upon tensoring by $\otimes_{A^{1/p^{e+d_e}}_g} K^{1/p^\infty}$ by construction.


The assertion for $d > d_e$ follows by right exactness of tensor products upon tensoring the map in (\ref{eq:right-exact}) by $\otimes_{A^{1/p^{e + d_e}}_g} A_g^{1/p^{e+d}}$.
\end{proof}

Recall that for an $F$-finite Noetherian ring $R$ of prime characteristic $p > 0$, 
\[
\gamma(R) = \max\{\log_p[\kappa(\fq)^{1/p}:\kappa(\fq)]: \fq \in \Min(R)\},
\]
and if $M$ is a finitely generated $R$ module, then the global Hilbert--Kunz multiplicity of $M$ is
\[
\ehk(M) = \lim_{e \longrightarrow \infty} \frac{\mu_R(F^e_*(M))}{p^{e\gamma(R)}}.
\]

We can now prove the local version of \ref{axiom:A2-modified-body} for $\sP_{HK,\lambda}$ up to an equidimensionality assumption on the generic fiber. Note that the generic fiber to which \ref{axiom:A2-modified-body} is applied will be equidimensional provided $X$ is equidimensional (see Proposition \ref{prop:gen-fiber-equdimensional}), hence this is a harmless assumption.

\begin{theorem}\label{theorem:a2}(c.f. \cite[Theorem 4.10]{CRST17})
Let $\varphi: A\rightarrow R$ be a finite type map of $F$-finite rings of prime characteristic $p > 0$. Suppose $A$ is a domain with fraction field $K$ and the generic fiber $R_K$ is equidimensional. 
\begin{enumerate}
    \item  There exists $g \in A - \{0\}$ such that for any $\fp \in D(g)$, $d \geq 0$, $\fq \in \Min(R_{K^{1/p^\infty}})$ and $\fP \in \Min(R_{\kappa(\fp)^{1/p^d}})$, 
    \[
     [\kappa(\fP)^{1/p}:\kappa(\fP)] = [\kappa(\fp)^{1/p}:\kappa(\fp)][\kappa(\fq)^{1/p}:\kappa(\fq)].
    \]
   Moreover, for any $y \in \Spec(R_{K^{1/p^\infty}})$, $x \in \Spec(R_{\kappa(\fp)^{1/p^d}})$, if $\alpha \coloneqq \log_p[\kappa(\fp)^{1/p}:\kappa(\fp)]$, then
    \[
     \gamma(R_{\kappa(\fp)^{1/p^d},x}) = \gamma(R_{\kappa(\fp)^{1/p^d}}) = \alpha + \gamma(R_{K^{1/p^\infty}}) = \alpha + \gamma(R_{K^{1/p^\infty},y}).
    \]
    
    \item Given $e > 0$, there exists a $d_e > 0$ and an open $D(g) \subseteq \Spec(A)$ such that for all $d \geq d_e$ and for all $\fp \in D(g)$, 
    \[
    \mu_{R_{\kappa(\fp)^{1/d}}}\big{(}F^e_*(R_{\kappa(\fp)^{1/d}})\big{)} \leq [\kappa(\fp)^{1/p^e}:\kappa(\fp)] \mu_{R_{K^{1/p^\infty}}}\big{(}F^e_*(R_{K^{1/p^\infty}})\big{)}.
    \]
    
    \item If $\ehk(R_{K^{1/p^\infty}})<\lambda$, then there exists an open $D(g) \subseteq\Spec A$ and $d_0 > 0$ such that for all $\fp\in D(g)$, $d \geq d_0$, $x \in \Spec(R_{\kappa(\fp)^{1/p^d}})$, 
    \[
    \ehk(R_{\kappa(\fp)^{1/p^d},x})<\lambda.
    \]
    If $L/\kappa(\fp)$ is finite purely inseparable, then for all $y \in \Spec(R_L)$, $\ehk(R_{L, y}) < \lambda$.
    
    
    \item If $\ehk(R_{K^{1/p^\infty}})<\lambda$, then there exists an open $D(g) \subseteq\Spec A$ such that for all $\fp \in D(g)$, all finitely generated field extensions $L/\kappa(\fp)$ and all $y \in \Spec(R_L)$,
    \[
    \ehk(R_{L,y}) < \lambda. 
    \]

\end{enumerate}
\end{theorem}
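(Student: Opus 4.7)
The plan is to reduce part (4) to the finite purely inseparable case handled in part (3). Fix $\fp \in D(g)$, a finitely generated extension $L/\kappa(\fp)$, and $y \in \Spec(R_L)$. For $d \geq d_0$ to be chosen below, set $M \coloneqq L \cdot \kappa(\fp)^{1/p^d}$, the compositum inside an algebraic closure of $L$. Then $M/L$ is finite purely inseparable (of degree dividing $[\kappa(\fp)^{1/p^d}:\kappa(\fp)]$), $M/\kappa(\fp)^{1/p^d}$ is finitely generated, and $R_M = R \otimes_A M$ is a Noetherian finite-type $M$-algebra. I would then choose $y' \in \Spec(R_M)$ lying over $y$ (possible since $R_L \hookrightarrow R_M$ is module-finite and injective by $A$-flatness of $R$), and let $x \in \Spec(R_{\kappa(\fp)^{1/p^d}})$ be its image.

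The target is the chain of comparisons
\[
\ehk(R_{L,y}) \;\overset{(a)}{\leq}\; \ehk(R_{M,y'}) \;\overset{(b)}{=}\; \ehk(R_{\kappa(\fp)^{1/p^d},x}) \;\overset{(c)}{<}\; \lambda.
\]
Inequality $(a)$ will follow from Theorem \ref{thm:HK-faithfully-flat-map} applied to the flat local map $R_{L,y} \hookrightarrow R_{M,y'}$. Equality $(b)$ will come from Theorem \ref{KunzA1} applied to the flat local map $R_{\kappa(\fp)^{1/p^d},x} \hookrightarrow R_{M,y'}$, provided its closed fiber is regular. Strict inequality $(c)$ is exactly the conclusion of part (3), valid on $D(g)$ for $d \geq d_0$.

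To arrange regularity of the closed fiber, which equals the localization of $\kappa(x) \otimes_{\kappa(\fp)^{1/p^d}} M$ at the prime corresponding to $y'$, I need $M/\kappa(\fp)^{1/p^d}$ to be separably generated for large $d$. Fix a transcendence basis $t_1, \dots, t_m$ of $L/\kappa(\fp)$; this also serves as a transcendence basis for $M/\kappa(\fp)^{1/p^d}$. In the limit, $L \cdot \kappa(\fp)^{1/p^\infty}/\kappa(\fp)^{1/p^\infty}(t_1,\dots,t_m)$ is a finite separable algebraic extension because $\kappa(\fp)^{1/p^\infty}$ is perfect. This finite separable extension is defined by finitely many coefficients of minimal polynomials, all of which live in $\kappa(\fp)^{1/p^d}$ for $d$ sufficiently large, so $M/\kappa(\fp)^{1/p^d}(t_1,\dots,t_m)$ is finite separable algebraic for such $d$. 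Granting this, $\kappa(x) \otimes_{\kappa(\fp)^{1/p^d}} M$ is a finite étale extension of a localization of $\kappa(x)[t_1,\dots,t_m]$, hence regular, and its localization at $y'$ is a regular local ring.

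The main obstacle is the descent step in the previous paragraph: verifying that the separating transcendence basis of $L \cdot \kappa(\fp)^{1/p^\infty}/\kappa(\fp)^{1/p^\infty}$ descends to a finite level $\kappa(\fp)^{1/p^d}$. This is a standard but slightly delicate fact about finitely generated field extensions in positive characteristic, and I would handle it by running through generators and minimal polynomials explicitly. Once $M/\kappa(\fp)^{1/p^d}$ is separably generated, the rest is a routine assembly of Theorems \ref{thm:HK-faithfully-flat-map} and \ref{KunzA1} together with part (3); in particular the open $D(g)$ produced in part (4) can be taken to be the same as the one from part (3).
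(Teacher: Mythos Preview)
Your overall strategy for part (4) is exactly the paper's: sandwich $L$ between $\kappa(\fp)$ and a field $M$ so that $M$ sits over a finite purely inseparable extension of $\kappa(\fp)$ via a separable extension, then apply part (3) together with Theorems \ref{KunzA1} and \ref{thm:HK-faithfully-flat-map}. The paper obtains the diagram by citing \cite[Lemma 4.8]{DM19}, whereas you construct $M$ explicitly as the compositum $L\cdot\kappa(\fp)^{1/p^d}$. That explicit construction is fine and gives the same diagram.

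However, your justification that $M/\kappa(\fp)^{1/p^d}$ is separably generated has a real gap. You fix a transcendence basis $t_1,\dots,t_m$ of $L/\kappa(\fp)$ and assert that in the limit $L\cdot\kappa(\fp)^{1/p^\infty}$ is \emph{separable} over $\kappa(\fp)^{1/p^\infty}(t_1,\dots,t_m)$ ``because $\kappa(\fp)^{1/p^\infty}$ is perfect.'' Perfectness of the base only guarantees that \emph{some} transcendence basis is separating, not the one you started with. Concretely, take $\kappa(\fp)=\mathbb{F}_p$ (already perfect), $L=\mathbb{F}_p(t^{1/p})$ with $t$ transcendental, and choose the transcendence basis $\{t\}$. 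Then $\kappa(\fp)^{1/p^\infty}=\mathbb{F}_p$, $L\cdot\kappa(\fp)^{1/p^\infty}=L$, and $L/\mathbb{F}_p(t)$ is purely inseparable, so your limit extension is not separable and there is nothing to descend. Your subsequent descent step (``finitely many coefficients of minimal polynomials'') therefore never gets off the ground.

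The fix is easy and stays within the paper's toolkit. One option: apply Lemma \ref{EGA-rocks}(2) to the one--point scheme $\Spec(L)$ over $\kappa(\fp)$ to get a finite purely inseparable $k'/\kappa(\fp)$ with $(L\otimes_{\kappa(\fp)}k')_{\red}=L\cdot k'$ geometrically reduced, i.e.\ separable, over $k'$; then embed $k'\subseteq\kappa(\fp)^{1/p^d}$ and use Lemma \ref{EGA-rocks}(1) to conclude $M=L\cdot\kappa(\fp)^{1/p^d}$ is separable over $\kappa(\fp)^{1/p^d}$. Alternatively, choose a \emph{separating} transcendence basis $s_1,\dots,s_m$ of $L\cdot\kappa(\fp)^{1/p^\infty}$ over $\kappa(\fp)^{1/p^\infty}$ (which exists since the base is perfect), observe that the $s_i$ and the finitely many separable minimal--polynomial coefficients all lie in $L\cdot\kappa(\fp)^{1/p^d}$ for $d\gg 0$, and then run your descent. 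Either way, once $M/\kappa(\fp)^{1/p^d}$ is separable your chain $(a)$--$(b)$--$(c)$ goes through exactly as written, and the open set $D(g)$ may indeed be taken from part (3).
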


\begin{proof}
$A$ has a non-empty regular locus because it is a domain. Thus, after localizing $\varphi$ at a suitable element of $g \in A$, we may assume, as in the proof of Theorem \ref{prop:UBPH-K} that
\begin{itemize}
    \item $A_g$ is a regular domain,
    \item all fibers of $\varphi_g$ are equidimensional (via Corollary \ref{cor:spreading-out-equidim}), and
    \item $R_g$ is a faithfully flat $A_g$-algebra which is module finite over $A_g[t_1,\dots,t_{\delta}]$ (via generic freeness and Noether normalization). Here $t_1,\dots,t_\delta \in R_g$ are algebraically independent over $A_g$, and $\delta = \dim(R_g) - \dim(A_g) = \dim(R_g) - \dim(A)$.
\end{itemize}
In particular, since $A_g[t_1,\dots,t_\delta]$ is regular, the module-finite inclusion $A_g[t_1,\dots,t_\delta] \hookrightarrow R_g$ splits by the Direct Summand Theorem. Then for all $\fp \in D(g)$, $R_{\kappa(\fp)}$ is an equidimensional, module finite extension of the polynomial ring $\kappa(\fp)[t_1,\dots,t_\delta]$. In particular, $\dim(R_{\kappa(\fp)}) = \delta$, for all $\fp \in D(g)$. With these simplifications, we can now prove the theorem.

(1) Let $\fq$ be a minimal prime of $R_{K^{1/p^\infty}}$. Observe that $R_{K^{1/p^\infty}}$ is equidimensional, since it is a purely inseparable extension of the equidimensional ring $R_K$, and so, has homeomorphic $\Spec$. Moreover, $R_{K^{1/p^\infty}}$ is module finite over $K^{1/p^\infty}[t_1,\dots,t_\delta]$, and so, $\dim(R_{K^{1/p^\infty}}) = \delta$. Then by Proposition \ref{prop:constancy-Kunz-function}(5) applied to the finite extension $K^{1/p^\infty}[t_1,\dots,t_\delta] \hookrightarrow R_{K^{\infty}}$, we get
\begin{equation}\label{eq:min-prime-res-deg}
[\kappa(\fq)^{1/p}:\kappa(\fq)]  = [(K^{1/p^\infty}(t_1,\dots,t_\delta))^{1/p}:K^{1/p^\infty}(t_1,\dots,t_\delta)] = p^\delta.
\end{equation}
Thus, $\gamma(R_{K^{1/p^\infty}}) = \delta$, because $\fq$ is an arbitrary minimal prime of $\Spec(R_{K^{1/p^\infty}})$. Now for any $y \in \Spec(R_{K^{1/p^\infty}})$, the minimal primes of $R_{K^{1/p^\infty}, y}$ correspond to certain minimal primes of $R_{K^{1/p^\infty}}$ and have the same residue fields. Thus,
\begin{equation}\label{eq:gamma=delta}
\gamma(R_{K^{1/p^\infty}, y}) = \gamma(R_{K^{1/p^\infty}}) = \delta.
\end{equation}

Similarly, let $\fP$ be a minimal prime of $\Spec(R_{\kappa(\fp)^{1/p^d}})$, for $\fp \in D(g)$ and $d \geq 0$. Since $R_{\kappa(\fp)^{1/p^d}}$ is equidimensional of dimension $\delta$, applying Proposition \ref{prop:constancy-Kunz-function}(5) to the finite extension $\kappa(\fp)^{1/p^d}[t_1,\dots,t_\delta] \hookrightarrow R_{\kappa(\fp)^{1/p^d}}$, we get
\begin{align*}
&[\kappa(\fP)^{1/p}:\kappa(\fP)] = [(\kappa(\fp)^{1/p^d}(t_1,\dots,t_\delta))^{1/p}:\kappa(\fp)^{1/p^d}(t_1,\dots,t_\delta)] \\ 
&[(\kappa(\fp)^{1/p^d})^{1/p}:\kappa(\fp)^{1/p^d}]p^\delta \stackrel{(\ref{eq:min-prime-res-deg})}{=}[\kappa(\fp)^{1/p}:\kappa(\fp)][\kappa(\fq)^{1/p}:\kappa(\fq)].
\end{align*}
Moreover, $\fP$ is an arbitrary minimal prime of $\Spec(R_{\kappa(\fp)^{1/p^d}})$. So for any $x \in \Spec(R_{\kappa(\fp)^{1/p^d}})$, we have 
\[
\gamma(R_{\kappa(\fp), x}) = \gamma(R_{\kappa(\fp)})  \stackrel{(\ref{eq:min-prime-res-deg})}{=} \log_p[\kappa(\fp)^{1/p}:\kappa(\fp)] + \delta \stackrel{(\ref{eq:gamma=delta})}{=} \alpha + \gamma(R_{K^{1/p^\infty}}) \stackrel{(\ref{eq:gamma=delta})}{=} \alpha + \gamma(R_{K^{1/p^\infty},y}).
\] 
This proves (1).

(2) Let $b_e \coloneqq \mu_{R_{K^{1/p^\infty}}}\big{(}F^e_*(R_{K^{1/p^\infty}})\big{)}$. Then there exists a surjective $R_{K^{1/p^\infty}}$-linear map
\[
R_{K^{1/p^\infty}}^{\oplus b_e}\twoheadrightarrow F^e_*(R_{K^{1/p^\infty}}).
\]
For the given $e$, choose $g \in A$ and $d_e \in \mathbb{N}$ as in Lemma \ref{min-gen-K-inf}. Then for all $d \geq d_e$ we obtain $R_{A^{1/p^{e+d}}_g}$-linear surjections
\[
R_{A_g^{1/p^{e + d}}}^{\oplus b_e} \twoheadrightarrow F^e_*(R_g) \otimes_{A_g^{1/p^e}} A_g^{1/p^{e + d}}.
\]
Let $\fp \in D(g)$, and $\tilde{\fp}$ be the prime ideal of $A_g^{1/p^{e+d}}$ corresponding to $\fp$. Then $\kappa(\tilde{\fp}) = \kappa(\fp)^{1/p^{e+d}}$. Applying $\otimes_{A^{1/p^{e+d}}_g} \kappa({\fp})^{1/p^{e+d}}$ to the above surjection then gives a surjective $R_{\kappa({\fp})^{1/p^{e + d}}}$-linear (hence also $R_{\kappa(\fp)^{1/p^d}}$-linear) map
\begin{equation}\label{eq:surjection}
R_{\kappa({\fp})^{1/p^{e+d}}}^{\oplus b_e} \twoheadrightarrow F^e_*(R_g) \otimes_{A_g^{1/p^e}} \kappa({\fp})^{1/p^{e+d}}.
\end{equation}
However, one has $R_{\kappa(\fp)^{1/p^d}}$-linear isomorphisms
\[
F^e_*(R_g) \otimes_{A_g^{1/p^e}} \kappa({\fp})^{1/p^{e+d}} \cong F^e_*(R_g \otimes_{A_g} \kappa(\fp)^{1/p^{d}}) =: F^e_*(R_{\kappa(\fp)^{1/p^{d}}}),
\]
and 
\[
R_{\kappa(\fp)^{1/p^{e+d}}} \cong R^{\oplus [\kappa(\fp)^{1/p^e}:\kappa(\fp)]}_{\kappa(\fp)^{1/p^{d}}}.
\]
Hence, (\ref{eq:surjection}) can be identified with a $R_{\kappa(\fp)^{1/p^{d}}}$-linear surjection
\[
R^{\oplus [\kappa(\fp)^{1/p^e}:\kappa(\fp)]b_e}_{\kappa(\fp)^{1/p^{d}}} \twoheadrightarrow F^e_*(R_{\kappa(\fp)^{1/p^{d}}}).
\]
Thus, for all $d \geq d_e$ and $\fp \in D(g)$,
\[
\mu_{R_{\kappa(\fp)^{1/p^d}}}\big{(}F^e_*(R_{\kappa(\fp)^{1/p^d}})\big{)} \leq [\kappa(\fp)^{1/p^e}:\kappa(\fp)]b_e =  [\kappa(\fp)^{1/p^e}:\kappa(\fp)]\mu_{R_{K^{1/p^\infty}}}\big{(}F^e_*(R_{K^{1/p^\infty}})\big{)},
\]
as desired.

(3) Since 
\[
\lim_{e \longrightarrow \infty} \frac{\mu_{R_{K^{1/p^\infty}}}\big{(}F^e_*(R_{K^{1/p^\infty}})\big{)}}{p^{e\gamma(R_{K^{1/p^\infty}})}} = \ehk(R_{K^{1/p^\infty}}) < \lambda,
\]
there exists $\epsilon > 0$ small enough such that for all $e \gg 0$, we have
\[
\frac{\mu_{R_{K^{1/p^\infty}}}\big{(}F^e_*(R_{K^{1/p^\infty}})\big{)}}{p^{e\gamma(R_{K^{1/p^\infty}})}} < \lambda - 2\epsilon.
\]
Indeed, one can choose $\epsilon = (\lambda - \ehk(R_{K^{1/p^\infty}}))/4$.

Choose $g$ such that it simultaneously satisfies the conclusion of part (1) of this Theorem, and such that there exist constants $C, e_0 > 0$ so that for all $d, e > e_0$, $\fp \in D(g)$, $x \in \Spec(R_{\kappa(\fp)^{1/p^d}})$,
\begin{align*}
&\left|\ehk\left(R_{\kappa(\fp)^{1/p^d}, x}\right)-\frac{\ell_{R_{\kappa(\fp)^{1/p^d},x}}\left(\frac{R_{\kappa(\fp)^{1/p^d},x}}{\fP^{[p^{e}]}}\right)}{p^{e\dim\left(R_{\kappa(\fp)^{1/p^d},x}\right)}}\right| =\\
& \left|\ehk\left(R_{\kappa(\fp)^{1/p^d},x}\right)-\frac{\mu_{R_{\kappa(\fp)^{1/p^d},x}}\left(F^e_*\left(R_{\kappa(\fp)^{1/p^d},x}\right)\right)}{p^{e\gamma(R_{\kappa(\fp)^{1/p^d},x})}}\right| \leq \frac{C}{p^e}
\end{align*}
where the equality above follows from Lemma \ref{lem:gamma-local-HK}. Such a $g$ exists by Theorem \ref{thm:UBPH-K-general} because the pair $(R,\varphi)$ satisfies UBPH-K with data $(e_0,e_0)$, for some $e_0 > 0$.

Now pick $e_1 \gg e_0$ such that
\begin{equation}\label{eq:-2epsilon}
\frac{\mu_{R_{K^{1/p^\infty}}}\big{(}F^{e_1}_*(R_{K^{1/p^\infty}})\big{)}}{p^{e_1\gamma(R_{K^{1/p^\infty}})}} < \lambda - 2\epsilon,
\end{equation}
and for all $d > e_0$, $\fp \in D(g)$, $x \in \Spec(R_{\kappa(\fp)^{1/p^d}})$,
\begin{equation}\label{eq:UBPH-Ke_1}
\left|\ehk\left(R_{\kappa(\fp)^{1/p^d},x}\right)-\frac{\mu_{R_{\kappa(\fp)^{1/p^d},x}}\left(F^{e_1}_*\left(R_{\kappa(\fp)^{1/p^d},x}\right)\right)}{p^{e_1\gamma(R_{\kappa(\fp)^{1/p^d},x})}}\right| \leq \frac{C}{p^{e_1}} < \epsilon.
\end{equation}

For this choice of $e_1$, replacing $D(g)$ by a smaller open set, we may assume by part (2) of this Theorem that there exists
\[
d_{e_1} > e_0
\]
such that for any $\fp \in D(g)$ and $d \geq d_{e_1}$,
\begin{equation}\label{eq:gamma-ineq1}
\mu_{R_{\kappa(\fp)^{1/p^d}}}\big{(}F^{e_1}_*(R_{\kappa(\fp)^{1/p^d}})\big{)} \leq  [\kappa(\fp)^{1/p^{e_1}}:\kappa(\fp)]\mu_{R_{K^{1/p^\infty}}}\big{(}F^{e_1}_*(R_{K^{1/p^\infty}})\big{)}.
\end{equation}
Note that since $F^e_*$ commutes with localization, for any $d \geq d_{e_1}$, $x \in \Spec(R_{\kappa(\fp)^{1/p^d}})$ right exactness of tensor products gives us
\begin{equation}\label{eq:gamma-ineq2}
\mu_{R_{\kappa(\fp)^{1/p^d},x}}\big{(}F^{e_1}_*(R_{\kappa(\fp)^{1/p^d},x})\big{)} \leq \mu_{R_{\kappa(\fp)^{1/p^d}}}\big{(}F^{e_1}_*(R_{\kappa(\fp)^{1/p^d}})\big{)}.
\end{equation}

Since $g$ satisfies the conclusion of $(1)$, we have
\[
\gamma(R_{\kappa(\fp)^{1/p^d},x}) = \log_p[\kappa(\fp)^{1/p}:\kappa(\fp)] + \gamma(R_{K^{1/p^\infty}}), 
\]
and so,
\[
\frac{\mu_{R_{\kappa(\fp)^{1/p^d},x}}\big{(}F^{e_1}_*(R_{\kappa(\fp)^{1/p^d},x})\big{)}}{p^{e_1\gamma(R_{\kappa(\fp)^{1/p},x})}} \stackrel{(\ref{eq:gamma-ineq2}),(\ref{eq:gamma-ineq1})}\leq \frac{\mu_{R_{K^{1/p^\infty}}}\big{(}F^{e_1}_*(R_{K^{1/p^\infty}})\big{)}}{p^{e_1\gamma(R_{K^{1/p^\infty}})}} \stackrel{(\ref{eq:-2epsilon})} < \lambda - 2\epsilon.
\]
Note that for the first inequality we are also using the fact that $[\kappa(\fp)^{1/p^{e_1}}:\kappa(\fp)] = [\kappa(\fp)^{1/p}:\kappa(\fp)]^{e_1}$. Finally, (\ref{eq:UBPH-Ke_1}) and the triangle inequality shows that for all $d \geq d_{e_1}$, $x \in \Spec(R_{\kappa(\fp)^{1/p^d}})$
\[
\ehk\left(R_{\kappa(\fp)^{1/p^d},x}\right) < \lambda - \epsilon < \lambda.
\]
Now just take $d_0 \coloneqq d_{e_1}$.

Suppose $L$ is a finite purely inseparable extension of $\kappa(\fp)$. Choose $d \gg d_0$ such that $L$ embeds in $\kappa(\fp)^{1/p^d}$. Thus, we have a faithfully flat map $R_L \hookrightarrow R_{\kappa(\fp)^{1/p^d}}$. Let $y \in \Spec(R_L)$ and choose $x \in \Spec(R_{\kappa(\fp)^{1/p^d}})$ lying over $y$. Then we have a faithfully flat local map of Noetherian rings
\[
R_{L,y} \hookrightarrow R_{\kappa(\fp)^{1/p^d},x}.
\]
By Lech-type inequality for Hilbert--Kunz multiplicity (Theorem \ref{thm:HK-faithfully-flat-map}), 
\[
\ehk(R_{L,y}) \leq \ehk(R_{\kappa(\fp)^{1/p^d},x}) < \lambda.
\]
This completes the proof of (3).

(4) Choose $g$ so that the conclusion of part (3) is satisfied. By \cite[Lemma 4.8]{DM19}, we have a Hasse diagram 
\[
    \begin{tikzcd}[sep={2.5em,between origins}]
      & k_2 &\\
      L \arrow[dash]{ur} & & k_1\arrow[dash]{ul}\\
      & \kappa(\fp)\arrow[dash]{ul}\arrow[dash]{ur} &
    \end{tikzcd}
  \]
of finitely generated field extensions such that $\kappa(\fp) \subseteq k_1$ is a finite purely inseparable extension and $k_1 \subseteq k_2$ is a finitely generated separable extension. By part (3), for all $x \in \Spec(R_{k_1})$
\[
\ehk(R_{k_1,x}) < \lambda.
\]
Since $R_{k_1} \rightarrow R_{k_2}$ is a faithfully flat map with geometrically regular fibers (it is the base change of a finitely generated separable extension), \cite[Theorem 3.9]{Kun76} shows that for any $y \in \Spec(R_{k_2})$, if $y$ lies over $x \in \Spec(R_{k_1})$, then
\[
\ehk(R_{k_2,y}) = \ehk(R_{k_1,x}) < \lambda.
\]
Finally, because $R_L \hookrightarrow R_{k_2}$ is also faithfully flat, for any $y \in \Spec(R_L)$ if we choose $y' \in \Spec(R_{k_2})$ such that $y'$ lies over $y$, then by Theorem \ref{thm:HK-faithfully-flat-map} we have
$
\ehk(R_{L,y}) \leq \ehk(R_{k_2,y'}) < \lambda.
$
\end{proof}

\begin{remark}
\label{rem:local-A2}
The affine analogue of axiom \ref{axiom:A2-modified-body} is precisely part (3) of Theorem \ref{theorem:a2}, modulo the equidimensionality assumption on the generic fiber. Furthermore, part (4) shows that the general fibers are close to being geometrically $\sP_{HK,\lambda}$. Note that we use the term `geometrically $\sP_{HK,\lambda}$' in the strongest sense, that is, $\sP_{HK,\lambda}$ should be preserved under arbitrary base field extensions and not just finitely generated ones (Definition \ref{def:scheme-phk}). This is primarily because Theorem \ref{theorem:a2} uses the behavior of $\ehk$ after passing to the perfection $K^{1/p^\infty}$, which is \emph{not} a finitely generated field extension of $K$ if $K$ is not perfect.  
\end{remark}

\subsection{Proof of Bertini theorems for Hilbert--Kunz multiplicity}
\begin{discussion*}\label{cgm3}
We now turn to the technical aspects of the work of \cite{CGM86}. In what follows, let 
\[
\psi:X\rightarrow\PP^n_k
\]
be a morphism of finite type $k = \overline{k}$-schemes with separably generated residue field extensions, and suppose $X$ is $\sP_{HK,\lambda}$. Let $Z$ be the reduced closed subscheme of $\PP^n_k\times_k (\PP^n_k)^*$ obtained by taking the closure of the set $$\{(x,H)\in \PP^n_k\times_k (\PP^n_k)^*\mid x\in H, \textrm{$H$ is a hyperplane}\}.$$ We have the following commutative diagram:
\begin{equation}
\label{fig:CGM-diagram}
    \begin{tikzcd}
      Y:=X\times_{\PP^n_k} Z\arrow[rr,"\sigma"]\arrow[dd,"\gamma"']\arrow[dr,"\rho"]&&Z\arrow[d,hookrightarrow]\arrow[dl,"\pi"]\\
      &(\PP^n_k)^*&\PP^n_k\times_k (\PP^n_k)^*\arrow[l,"\pi'"]\arrow[d]\\
      X\arrow[rr,"\psi"']&&\PP^n_k
    \end{tikzcd}
\end{equation}
where $\sigma$, $\gamma$ $\pi$ and $\pi'$ are the projections and $\rho=\pi\circ \sigma$.
When using \ref{axiom:A2-modified-body} in the proof of Theorem \ref{thm:cgm-main-thm}, one applies it to the finite type map $$\rho:Y\rightarrow (\PP^n_k)^*:=S.$$ 
This is because a closed fiber of $\rho$ is precisely $\psi^{-1}
(H)$, for a suitable hyperplane $H$ of $\PP^n_k$. Thus, if one knows that the generic fiber of $\rho$ is geometrically $\sP_{HK,\lambda}$, then a general closed fiber of $\rho$ (equivalently, a general hyperplane section of $\psi$) will be $\sP_{HK,\lambda}$ by \ref{axiom:A2-modified-body}, proving Bertini.

That the generic fiber of $\rho$ is geometrically $\sP_{HK,\lambda}$ follows by the proof of Theorem \ref{thm:cgm-main-thm}(1) in which Cumino, Greco and Manaresi show that if $\eta$ is the generic point of $(\PP^n_k)^*$, then for \emph{any} field extension $L/\kappa(\eta)$, one has an induced map
\[
Y_{\kappa(\eta)} \otimes_{\kappa(\eta)} L \rightarrow X
\]
with regular fibers. The fact that $Y_{\kappa(\eta)}\otimes_{\kappa(\eta)} L$ is $\sP_{HK,\lambda}$ now follows by \ref{axiom:A1-body} because $X$ is $\sP_{HK,\lambda}$.

Theorem \ref{theorem:a2} additionally shows that provided $Y_{\kappa(\eta)}$ is equidimensional, $\rho$ satisfies \ref{axiom:A2-modified-body} as long as $Y_{\kappa(\eta)} \otimes_{\kappa(\eta)} L$ is $\sP_{HK,\lambda}$ for the single field extension $$L = \kappa(\eta)^{1/p^\infty}.$$
 In particular, since $k = \overline{k}$,  all schemes involved in applying Theorem \ref{thm:cgm-main-thm} to prove Bertini for $\sP_{HK,\lambda}$ are $F$-finite. Thus, one may replace ``excellent" in the statement of \ref{axiom:A2-modified-body} with ``$F$-finite."  Finally, to finish the proof of Theorem \ref{ehk-bertini}, it remains to show that $Y_{\kappa(\eta)}$ is equidimensional when $X$ is equidimensional. This is done in Proposition \ref{prop:gen-fiber-equdimensional}.  
\end{discussion*}

We will need the following two lemmas to show equidimensionality of $Y_{\kappa(\eta)}$, the first of which is a general topological fact about Jacobson spaces.

\begin{lemma}\label{lem:generic-point-constructible}
Let $U$ be an irreducible scheme of finite type over a field with generic point $\eta$. If $Z\subseteq U$ is a constructible set containing all closed points of $U$, then $\eta\in Z$.
\end{lemma}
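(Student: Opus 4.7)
The plan is to argue by contradiction, exploiting two standard facts: (i) finite-type schemes over a field are Jacobson, so every non-empty open subset of $U$ contains a closed point, and (ii) in an irreducible Noetherian space, any constructible set containing the generic point contains a non-empty open subset (this is precisely the topological observation already used in the proof of Corollary \ref{cor:nzd-open}).

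Suppose for contradiction that $\eta\notin Z$. Since $U$ is Noetherian and $Z$ is constructible, its complement $W\coloneqq U\setminus Z$ is again constructible, and $\eta\in W$. Write $W=\bigcup_{i=1}^{n} L_i$ as a finite union of locally closed subsets of $U$. Then $\eta\in L_i$ for some $i$. Because $U$ is irreducible and $L_i$ is locally closed in $U$, the closure $\overline{L_i}$ equals $U$, and $L_i$ is open in $\overline{L_i}=U$. Hence $V\coloneqq L_i$ is a non-empty open subset of $U$ contained in $W$.

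Since $U$ is of finite type over a field, $U$ is a Jacobson scheme (\cite[\href{https://stacks.math.columbia.edu/tag/01P3}{Tag 01P3}]{stacks-project}), so every non-empty open subscheme of $U$ contains a closed point. Choose a closed point $x\in V$. Then $x\in W=U\setminus Z$, so $x\notin Z$, contradicting the hypothesis that $Z$ contains every closed point of $U$. Therefore $\eta\in Z$.

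There is no serious obstacle in this argument; the only subtle point is recalling that ``constructible set containing $\eta$ in an irreducible space contains a non-empty open set,'' which follows by decomposing into locally closed pieces and using that a locally closed subset of an irreducible space whose closure is the whole space must itself be open.
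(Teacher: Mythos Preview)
Your proof is correct and uses the same two ingredients as the paper's proof (the Jacobson property of $U$ and the fact that a locally closed subset of an irreducible space containing the generic point is open), just packaged contrapositively: the paper decomposes $Z$ itself, uses Jacobson to force $\bigcup V_i = U$, and then irreducibility to find an open piece of $Z$ containing $\eta$, whereas you decompose the complement $W=U\setminus Z$, use irreducibility first to extract an open piece of $W$, and then Jacobson to locate a closed point there. The arguments are essentially dual to one another.
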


\begin{proof}
Since $Z$ is constructible, write $Z=(V_1\cap\cO_1)\cup\cdots\cup(V_n\cap \cO_n)$, where $V_i\subseteq U$ are closed and $\cO_i\subseteq U$ are non-empty opens. By hypothesis, the closed set $V_1 \cup \dots \cup V_n$ contains all the closed points of $U$, and hence must equal $U$ because $U$ is Jacobson. Thus, $\eta \in V_1 \cup \cdots \cup V_n$, so without loss of generality assume $\eta \in V_1$. Since $U$ is irreducible, this shows $V_1 = U$, and so, $V_1 \cap \cO_1 = \cO_1$ is an open set contained in $Z$. But any non-empty open set of an irreducible scheme contains the generic point, so $\eta \in \cO_1 \subseteq Z$, as claimed.
\end{proof}

\begin{lemma}
\label{lem:gen-hyperplanesec-equidimensional}
Let $\psi: X \rightarrow \mathbb{P}^n_k$ be a finite type map of $k$-schemes, where $k$ is an algebraically closed field. Suppose $X$ is equidimensional. Then for a general hyperplane $H \subset \PP^n_k$, $\psi^{-1}(H)$ is equidimensional of dimension $\dim(X) - 1$.
\end{lemma}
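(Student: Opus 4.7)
The plan is to reduce to analyzing each irreducible component of $X$ separately and then invoke Krull's Hauptidealsatz. Note first that the conclusion is vacuous if $\dim(X) = 0$, since for general $H$ the preimage $\psi^{-1}(H)$ is empty (hence vacuously equidimensional), so we may assume $d := \dim(X) \geq 1$. Since equidimensionality is purely topological, replace $X$ by $X_{\red}$. Let $X_1, \dots, X_r$ be the irreducible components of $X_{\red}$ with the reduced induced structure, so each $X_i$ is an integral finite-type $k$-scheme of dimension $d$ (using equidimensionality of $X$). For each $i$, set $\psi_i := \psi|_{X_i}$ and $V_i := \overline{\psi_i(X_i)}$ with reduced structure. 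Since $\psi^{-1}(H) = \bigcup_i \psi_i^{-1}(H)$ as topological spaces and finite intersections of non-empty opens of the irreducible space $(\PP^n_k)^*$ remain non-empty, it suffices to produce for each $i$ a non-empty Zariski open $U_i \subseteq (\PP^n_k)^*$ such that whenever $H \in U_i$, the preimage $\psi_i^{-1}(H)$ is either empty or equidimensional of dimension $d - 1$.

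The construction of $U_i$ splits according to $\dim(V_i)$. If $\dim(V_i) = 0$, then $V_i$ is a finite set of closed points and I take $U_i$ to be the non-empty open of hyperplanes missing each of them; for $H \in U_i$, $\psi_i^{-1}(H) = \emptyset$ and $X_i$ contributes nothing. If $\dim(V_i) \geq 1$, I take $U_i$ to be the non-empty open of hyperplanes not containing $V_i$. For such $H$ and any $x \in \psi_i^{-1}(H)$, work in an affine chart $W$ of $\PP^n_k$ around $\psi(x)$ on which $H$ is cut out by a single linear form $f$. Then $\psi_i^*(f) \in \cO_{X_i, x}$ is non-zero: since $V_i \not\subseteq H$, the form $f$ does not vanish identically on the dense open $V_i \cap W$ of the irreducible $V_i$. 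Hence $\psi_i^*(f)$ is a non-zerodivisor in the domain $\cO_{X_i, x}$, and Krull's Hauptidealsatz forces every minimal prime of $(\psi_i^*(f))$ in $\cO_{X_i, x}$ to have height $1$. Because $X_i$ is an integral finite-type $k$-scheme, it is biequidimensional and satisfies the dimension formula (Remark \ref{rem:biequidim}), so each such minimal prime corresponds to a codimension-$1$ irreducible closed subset of $X_i$ through $x$, which has dimension $d - 1$. Since $x$ was arbitrary, every irreducible component of $\psi_i^{-1}(H)$ has dimension $d - 1$.

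Taking $U := \bigcap_{i=1}^r U_i$ yields the desired non-empty open set of hyperplanes. No deep obstacle arises; the only technical point to watch is the case of a component $X_i$ with $\dim(V_i) = 0$, where the fibers of $\psi_i$ have dimension $d$, so one might worry that hyperplane preimages could have the wrong dimension. Fortunately, for general $H$ the finite set $V_i$ is missed entirely, so $\psi_i^{-1}(H)$ is simply empty and does not interfere with the equidimensionality of the other pieces.
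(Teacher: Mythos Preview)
Your proof is correct and follows essentially the same skeleton as the paper's: both arguments reduce to showing that the local equation of $H$ pulls back to a nonzerodivisor, then apply Krull's Hauptidealsatz together with the dimension formula coming from biequidimensionality (Remark~\ref{rem:biequidim}). The organizational difference is in how the nonzerodivisor condition is secured. The paper works with $X$ as given and observes that a general hyperplane $H$ avoids the images of the finitely many associated points of $X$; a Stacks Project citation then guarantees that $\psi^{-1}(H)$ is an effective Cartier divisor on $X$, after which the Hauptidealsatz argument runs on $X$ itself. You instead pass to $X_{\red}$, split into integral components $X_i$, and verify nonvanishing of $\psi_i^*(f)$ directly using irreducibility of $V_i$, with a separate case for components whose image is zero-dimensional. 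Your route is slightly more elementary and self-contained (no external reference for the pullback of Cartier divisors), at the cost of the extra bookkeeping of the component decomposition and the $\dim V_i = 0$ case; the paper's route is more uniform, dispatching all components at once via the associated-points criterion.
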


\begin{proof}
Since $X$ is equidimensional and of finite type over a field $k$, $X$ is biequidimensional. We first claim that if $D \subset X$ is a Cartier divisor on $X$ (that is a locally principal closed subscheme, cut out locally by a nonzerodivisor), then $D$ is equidimensional and $\dim(D) = \dim(X) - 1$. To see this, we may assume without loss of generality that $X$ is affine, say $X = \Spec(A)$. Then $A$ is biequidimensional by Remark \ref{rem:biequidim}(2), hence satisfies the dimension formula (Remark \ref{rem:biequidim}(1)). That is, for any prime ideal $\fp$ of $A$, we have 
\begin{equation}\label{eq:height-formula}
\height{\fp} + \dim(A/\fp) = \dim(A).
\end{equation}
So now assume $f \in A$ is a nonzerodivisor. We want to show that $A/fA$ is equidimensional of dimension $= \dim(A) - 1$. By Krull's Principal Ideal Theorem, if $\fp$ is a prime ideal of $A$ that is minimal over $f$, then $\height{\fp} = 1$. Hence $\dim(A/\fp) = \dim(A) - 1$ using (\ref{eq:height-formula}). But $A/\fp$ is precisely an irreducible component of $A/fA$, so we are done.

Now consider the finite type map $\psi : X \rightarrow \PP^n_k$. Since a hyperplane is a Cartier divisor on $\PP^n_k$, by \cite[\href{https://stacks.math.columbia.edu/tag/02OO}{Tag 02OO}, part (4)]{stacks-project}, if $H$ is a hyperplane that does not contain the images of any associated points of $X$ (of which there are only finitely many since $X$ is a quasi-compact), then $\psi^{-1}(H)$ is a Cartier divisor on $X$. Then by the previous paragraph, $\psi^{-1}(H)$ is equidimensional of dimension $= \dim(X) - 1$. 
\end{proof}

The proof that $Y_{\kappa(\eta)} = \rho^{-1}(\eta)$ is equidimensional is now fairly straightforward.

\begin{proposition}
\label{prop:gen-fiber-equdimensional}
Let $\psi: X \rightarrow \PP^n_k$ be a finite type morphism of $k$-schemes, where $k$ is an algebraically closed field. If $X$ equidimensional, then the generic fiber $\rho^{-1}(\eta)$ from Diagram (\ref{fig:CGM-diagram}) is also equidimensional.
\end{proposition}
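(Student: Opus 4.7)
The strategy is to combine the generic-position statement of Lemma \ref{lem:gen-hyperplanesec-equidimensional} with the constructibility result Proposition \ref{prop:equidim-constructible} and the Jacobson-type argument Lemma \ref{lem:generic-point-constructible}, so as to spread equidimensionality from a dense family of closed hyperplane sections up to the generic fiber of $\rho$. The main ingredient is already essentially done; the only real task is to thread these three tools together correctly.

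First, set $d \coloneqq \dim(X)$. By Lemma \ref{lem:gen-hyperplanesec-equidimensional}, there exists a non-empty (hence dense) open subset $U \subseteq (\PP^n_k)^*$ such that for every closed point $H \in U$, the fiber
\[
\rho^{-1}(H) \;=\; \psi^{-1}(H)
\]
is equidimensional of dimension $d-1$ (if it is empty, it qualifies vacuously). Since $k = \overline{k}$, the closed points of $U$ are exactly the $k$-points, so every $k$-point of $U$ has the desired property.

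Next, apply Proposition \ref{prop:equidim-constructible} to the finite type morphism $\rho: Y \to (\PP^n_k)^*$ with $\Phi = \{d-1\}$ to conclude that
\[
E \;\coloneqq\; \bigl\{\, s \in (\PP^n_k)^* \;:\; \textrm{every irreducible component of } \rho^{-1}(s) \textrm{ has dimension } d-1 \,\bigr\}
\]
is constructible in $(\PP^n_k)^*$. The previous paragraph shows that every closed point of $U$ lies in $E$, so $E \cap U$ is a constructible subset of the irreducible finite type $k$-scheme $U$ containing \emph{every} closed point of $U$.

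Finally, applying Lemma \ref{lem:generic-point-constructible} to the irreducible variety $U$ and the constructible subset $E \cap U$ places the generic point $\eta$ of $U$ (which is the generic point of $(\PP^n_k)^*$) in $E$. Hence $\rho^{-1}(\eta)$ is either empty or equidimensional of dimension $d-1$; in either case it is equidimensional, as required. The one subtlety worth flagging is that $E$ itself need not contain every closed point of $(\PP^n_k)^*$ --- only those lying in $U$ --- so Lemma \ref{lem:generic-point-constructible} must be applied on $U$ rather than on all of $(\PP^n_k)^*$. Beyond this bookkeeping point, there is no substantive obstacle.
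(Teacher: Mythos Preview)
Your proof is correct and follows essentially the same approach as the paper: apply Lemma \ref{lem:gen-hyperplanesec-equidimensional} to get equidimensionality over closed points of an open $U$, use Proposition \ref{prop:equidim-constructible} with $\Phi=\{\dim(X)-1\}$ to get constructibility, and then invoke Lemma \ref{lem:generic-point-constructible} on $U$ to capture the generic point. The only cosmetic difference is that you apply Proposition \ref{prop:equidim-constructible} on all of $(\PP^n_k)^*$ and then intersect with $U$, whereas the paper applies it directly to $\rho|_{\rho^{-1}(U)}$; your remark about restricting to $U$ before invoking Lemma \ref{lem:generic-point-constructible} is exactly right.
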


\begin{proof}
Observe that a closed point $x \in (\PP^n_K)^*$ corresponds to a hyperplane $H$ in $\mathbb{P}^n_k$, and $\rho^{-1}(x) \simeq \psi^{-1}(H)$. Thus, since $X$ is equidimensional, Lemma \ref{lem:gen-hyperplanesec-equidimensional} implies that there exists an open set $U \subseteq (\PP^n_k)^*$ such that for all closed points $x \in U$, the fiber of 
\[
\rho^{-1}(U) \xrightarrow{\rho} U
\]
over $x$ is equidimensional of dimension $= \dim(X) - 1$.
Then applying Proposition \ref{prop:equidim-constructible} with $\Phi = \{\dim(X) - 1\}$, we see that the set
\[
Z \coloneqq \{u \in U: \textrm{$\rho^{-1}(u)$ is equidimensional of dimension $= \dim(X) - 1$}\}
\]
is a constructible subset of the open subvariety $U$ that contains all the closed points of $U$. Then $Z$ also contains the generic point $\eta$ by Lemma \ref{lem:generic-point-constructible}, and so, $\rho^{-1}(\eta)$ is equidimensional of dimension $ = \dim(X) -1$.
\end{proof}

\begin{proof}[Proof of Theorem \ref{ehk-bertini}]
Recall that a finite type scheme $X$ over a field $k$ is $\sP_{HK,\lambda}$ if $\ehk(\cO_{X,x})<\lambda, \text{for all} \hspace{1mm} x\in X.$ 

Part (1) follows by Discussion \ref{cgm3} now that we also have that the generic fiber of $\rho$ is equidimensional when $X$ is equidimensional by Proposition \ref{prop:gen-fiber-equdimensional}.



(2) Suppose $X$ is a closed subscheme of $\PP^n_k$. Since $X$ is equidimensional, the locus 
\[
\sU \coloneqq \ehk(X; \lambda)
\]
is open in $X$ by Corollary \ref{cor:A3-equidim-schemes}. Moreover, $\sU$ is equidimensional because $X$ is equidimensional. Therefore by part (1) of this theorem applied to the locally closed embedding $\sU \hookrightarrow \PP^n_k$, if $H$ is a general hyperplane of $\PP^n_k$, then 
\[
\ehk(\sU \cap H; \lambda) = \sU \cap H = \ehk(X; \lambda) \cap H,
\]
where the second equality follows by the defintion of $\sU$. Then (2) follows by the fact that $\ehk(X \cap H; \lambda) \supseteq \ehk(\sU \cap H; \lambda)$, because $\sU \cap H$ is an open subscheme of $X \cap H$.

(3) Suppose $\ehk(\cO_{X,x})\leq\lambda$, for all $x\in X$. By part (1) of this theorem, for each $m \in \N$ there exists an open set $\sU_m \subset (\PP^n_k)^*$, such that for every hyperplane $H \in \sU_m$ and every $y\in\psi^{-1}(H),$
\[
\ehk(\cO_{\psi^{-1}(H),y})<\lambda+\frac{1}{m}.
\]
Taking a hyperplane $H \in \bigcap_m \sU_m$ then implies part (3). Note that a very general hyperplane exists because the ground field is uncountable; see for example \cite[Chapter 2, Exercise 2.5.10]{Liu02}.
\end{proof}

\vspace{4mm}


\section{Acknowledgments}
The paper owes a significant intellectual debt to the work of Javier Carvajal-Rojas, Karl Schwede and Kevin Tucker. The second author is additionally grateful to Kevin Tucker, his advisor, for his constant encouragement and for many insightful conversations related to this paper. We thank Takumi Murayama for multiple insightful conversations, for comments on a draft and for alerting us to a subtlety involving the definition of biequidimensionality, which saved us from making some false assertions. We also thank Lawrence Ein, Linquan Ma and Emanuel Reinecke for helpful discussions, and Ilya Smirnov for detailed comments on a draft.


\bibliographystyle{amsalpha}
\footnotesize

\end{document}